\documentclass[11pt,reqno]{amsart}

\usepackage[text={160mm,240mm},centering]{geometry}            
\usepackage{amssymb,amsmath,setspace,geometry,indentfirst,changepage,inputenc,amsthm}
\usepackage{mathrsfs,amsfonts,savesym,graphicx,bm,color}

\usepackage{graphicx}
\usepackage{amssymb}
\usepackage{dsfont}

\usepackage{lscape}

\usepackage{tikz}
\usetikzlibrary{positioning}

\usepackage{float}

\usepackage[all,2cell]{xy}    
\UseAllTwocells               

\usepackage{amsmath,amsthm}

\usepackage[mathscr]{eucal}
\usepackage[all]{xy}
\usepackage{mathrsfs}
\usepackage{hyperref}
\usepackage{color}

\newtheorem{theorem}{Theorem}[section]
\newtheorem{lemma}[theorem]{Lemma}

\newtheorem{conjecture}[theorem]{Conjecture}
\newtheorem{corollary}[theorem]{Corollary}
\newtheorem{proposition}[theorem]{Proposition}

\newtheorem{definition-lemma}[theorem]{Definition-Lemma}
\newtheorem{definition-theorem}[theorem]{Definition-Theorem}

\newtheorem*{maintheoremA}{Theorem A}
\newtheorem*{maintheoremB}{Theorem B}
\newtheorem*{maintheoremC}{Theorem C}

\theoremstyle{definition}

\newtheorem{definition}[theorem]{Definition}

\newtheorem{remark}[theorem]{Remark}

\usepackage{fancyhdr}
\allowdisplaybreaks

\title[The $n/d$ Root for Homogeneous Ideals]
{A Polynomial PDE Criterion for the $-n/d$ Root of the Bernstein-Sato polynomial of Homogeneous Ideals}

\author{Yifan Chen}
\address{
	Department of Mathematical Sciences,
	Tsinghua University,
	Beijing, 100084, P. R. China.}
\email{c-yf20@tsinghua.org.cn}

\author{Huaiqing Zuo}
\address{Department of Mathematical Sciences,
	Tsinghua University,
	Beijing, 100084, P. R. China.}
\email{hqzuo@mail.tsinghua.edu.cn}

\begin{document}
	
	\maketitle
	
	%

 \begin{abstract}
 	Let $I\subseteq\mathbb C[x_1,\ldots,x_n]$ be an ideal generated by homogeneous polynomials of a common degree $d$. We give a polynomial partial differential equation criterion guaranteeing that $-n/d$ is a root of the Bernstein-Sato polynomial $b_I(s)$. We apply this criterion to the ideal of maximal minors of a generic $m\times n$ matrix and obtain the distinguished root $-n$; combined with local divisibility along determinantal strata, this allows us to obtain the strong monodromy conjecture in the maximal-minor case. Finally, we prove that the criterion is stable under enlarging the linear span of the generators, adjoining generators in disjoint variables, products satisfying the natural slope condition, and Thom-Sebastiani sums. These stability results provide new classes of homogeneous ideals and polynomials for which the distinguished Bernstein-Sato root can be detected.
 	
 	Keywords. Bernstein-Sato polynomial,  monodromy conjecture.
		
		MSC(2020). 14F10, 13N10, 16S32.
 \end{abstract}

	\tableofcontents

\section{Introduction}

Let $X$ be a smooth complex algebraic variety and $f$ be a nonconstant regular function on $X$. The Bernstein-Sato polynomial $b_f(s)$ is the monic polynomial of smallest degree for which there is a differential operator $P(s) \in \mathcal{D}_{X}[s]$, satisfying $P(s)f^{s+1}=b_f(s)f^s$, where $s$ is an independent variable. The theory of the Bernstein-Sato polynomial originated in the work of Bernstein and Sato in the early 1970s. It is a basic invariant connecting $\mathcal{D}$-module theory with the geometry and topology of singularities. For example, the exponentials of the local roots recover the eigenvalues of Milnor monodromy \cite[Theorem 1]{Kashiwara_Bernstein_and_eigenvalue_of_monodromy}\cite[Theorem 3.4]{Malgrange_Bernstein_eigenvalue}. Moreover, roots in the interval $[-1,0)$ detect jumping coefficients of multiplier ideals \cite[Theorem 2]{Bernstein_Sato_polynomial_of_arbitrary_varieties}. 

A particularly important root problem arises from the monodromy conjecture. Budur, Musta\c{t}\u{a}, and Teitler formulated the $n/d$ conjecture: if a homogeneous polynomial $f$ of degree $d$ on $\mathbb C^n$ defines an essential indecomposable central hyperplane arrangement, then $b_f(-n/d)=0$ \cite[Conjecture 1.2]{BMT11}. They proved the weak monodromy conjecture for hyperplane arrangements and reduced the strong version to this statement \cite[Theorem 1.3]{BMT11}. Subsequent approaches included Walther's cohomological sufficient condition \cite[Theorem 4.12]{Wal05}, the result for generic multiplicities of Shi and Zuo \cite[Theorem 1.7]{SZ24}, and the nonresonant case of Xie and Yu \cite[Theorem 1.5]{XY25}. The conjecture has recently been proved in full by Davis and Yang using multivariate $V$-filtrations \cite[Theorems 1.2 and 1.3]{DY26}.

Nowadays, Budur, Musta\c{t}\u{a}, and Saito have extended the construction of the Bernstein-Sato polynomial from hypersurfaces to arbitrary closed subschemes. It is defined by a $V$-filtration and is independent of the chosen generators \cite[Sections 2.4 and 2.10]{Bernstein_Sato_polynomial_of_arbitrary_varieties}\cite[Theorem 2.5]{Bernstein_Sato_polynomial_of_arbitrary_varieties}. In this paper we study the ideal-theoretic $n/d$ root problem. Let $I=(f_1,\ldots,f_r)\subseteq\mathbb C[x_1,\ldots,x_n]$ be an ideal generated by homogeneous polynomials of a common degree $d$. Our first result gives a polynomial partial differential equation criterion for the distinguished equality $b_I(-n/d)=0$.

\begin{maintheoremA}[Theorem \ref{PDE_theorem}]
	Let $I=(f_1,\ldots,f_r)\subseteq\mathbb C[x_1,\ldots,x_n]$ be an ideal generated by homogeneous polynomials of a common degree $d$. Suppose that, for every $\alpha=(\alpha_1,\ldots,\alpha_r)\in\mathbb Z_{\geq 0}^r$, there is a homogeneous polynomial $P_\alpha\in\mathbb C[x_1,\ldots,x_n]$ of degree $d|\alpha|$ such that $P_0\neq 0$ and $x_iP_\alpha=\sum_{k=1}^r\partial_i f_k(\partial)P_{\alpha+e_k}$ for every $i$ and $\alpha$. Then $b_I(-n/d)=0$. Here $\partial_i f_k(\partial)$ means the differential operator obtained by replacing every $x_{i}$ in $\partial_{i}f_{k}$ with the corresponding $\partial_{i}$ and $e_{k}=(0,...,1,...,0)$ with its only non-zero term in the $k$-th position.
\end{maintheoremA}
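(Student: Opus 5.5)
The plan is to use the Budur--Musta\c{t}\u{a}--Saito description of $b_I(s)$ from \cite{Bernstein_Sato_polynomial_of_arbitrary_varieties}. There, $b_I(s)$ is the monic polynomial of smallest degree such that
$$b_I\Big(\sum_k s_k\Big)\prod_k f_k^{s_k}\in\sum_{|c|=1}\mathcal D_X[s_1,\ldots,s_r]\prod_{c_k<0}\binom{s_k}{-c_k}\prod_k f_k^{s_k+c_k}.$$
We will produce a $\mathbb C$-linear functional $\Lambda$ on the module $N=\mathbb C[x,f^{-1}][s_1,\ldots,s_r]\prod_k f_k^{s_k}$ with two properties:
\begin{enumerate}
\item $\Lambda$ kills the right-hand side;
\item $\Lambda\big(b(\sum s_k)\prod f_k^{s_k}\big)=b(-n/d)\,\Lambda\big(\prod f_k^{s_k}\big)$ with $\Lambda\big(\prod f_k^{s_k}\big)\neq 0$.
\end{enumerate}
Together these force $b_I(-n/d)=0$. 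The guiding heuristic is Fourier duality. The equation $\partial_i\prod f^{s}=\sum_k s_k(\partial_if_k)\prod f^{s-e_k}$ becomes, after the substitution $x\leftrightarrow\partial$, exactly the relation $x_iP_\alpha=\sum_k\partial_if_k(\partial)P_{\alpha+e_k}$. Under the same substitution the Euler operator $E=\sum x_i\partial_i$, which acts on $\prod f^s$ by $d\sum s_k$, is sent to $-E-n$. This is the source of the value $-n/d$.

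Concretely, we would first restrict to the ``diagonal'' part of $\mathcal D_X[s]$-combinations, namely elements of the form $g(x)\,Q(s)\prod f_k^{s_k-\alpha_k}$ with $\alpha\in\mathbb Z_{\ge0}^r$. (Only nonnegative shifts $-\alpha$ occur after moving every $x$ and $\partial$ to normal form, since $\partial$ lowers exponents and the generators in $|c|=1$ raise exponents by one.) On such an element we set
$$\Lambda\Big(g(x)Q(s)\prod_k f_k^{s_k-\alpha_k}\Big)=\big(g(\partial)P_\alpha\big)(0)\cdot\epsilon_\alpha(Q),$$
where $\epsilon_\alpha$ evaluates $Q$ against the falling factorials $\prod_k s_k(s_k-1)\cdots(s_k-\alpha_k+1)$, normalized so that we are effectively on the weight space where $\sum s_k=-n/d$. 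By homogeneity, $g(\partial)P_\alpha(0)$ vanishes unless $\deg g=d|\alpha|$, and this is what pins down the eigenvalue of $E$.

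The key steps, in order, are as follows.
\begin{enumerate}
\item Verify that $\Lambda$ is well defined, i.e.\ compatible with the relations $f_k\cdot\prod f^{s-\alpha}=\prod f^{s-\alpha+e_k}$. This amounts to showing $(g f_k)(\partial)P_\alpha(0)=g(\partial)P_{\alpha-e_k}(0)$ up to the correct factorial factor, and it should follow from the PDE by contracting with the Euler relation.
\item Check that $\Lambda(\partial_i u)=0$ for all $u$. Writing $\partial_i(g\prod f^{s-\alpha})$ via the Leibniz rule yields $(\partial_ig)(\partial)P_\alpha(0)+\sum_k(s_k-\alpha_k)\,(g\,\partial_if_k)(\partial)P_{\alpha+e_k}(0)$. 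The PDE turns the second sum into $(g\,x_i)$-type terms, so the two contributions cancel after the Fourier sign.
\item Check that $\Lambda$ vanishes on all images $\mathcal D_X[s]\binom{s_k}{-c_k}\prod f^{s+c}$ for $|c|=1$. Here the binomial factors interact with $\epsilon_\alpha$, and the factor $\binom{s_k}{1}$ is matched by the falling-factorial normalization.
\item Evaluate $\Lambda\big(b(\sum s_k)\prod f^s\big)$. Since $\sum s_k$ acts as $E/d$ and $\Lambda(E u)=\Lambda(-n\,u)$ by the Fourier identity, the result is $b(-n/d)\,P_0(0)$-type data. More precisely, it is $b(-n/d)\,\Lambda(h\prod f^s)$ for a suitable test polynomial $h$ with $h(\partial)P_0\neq0$, which exists because $P_0\neq0$.
\end{enumerate}

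The main obstacle I expect is making $\Lambda$ honestly well defined and simultaneously compatible with the $s$-variables. The $s_k$ are independent in $N$, but the PDE only involves the shift $\alpha$. So we must choose $\epsilon_\alpha$ (equivalently, specialize $s$ along a suitable generic line with $\sum s_k=-n/d$) so that the Leibniz coefficients $s_k-\alpha_k$ and the binomial factors from the $|c|=1$ generators are absorbed consistently. To handle this, I would first try passing to the specialization $\mathcal D_X f^{s}$ at a generic point of the hyperplane $\sum s_k=-n/d$, using the multiplicative structure of $\prod_k s_k(s_k-1)\cdots$. If that fails, I would instead reduce to a single-variable statement for $f=\sum_k y_kf_k$ on $\mathbb C^{n}\times\mathbb C^r$ via the BMS generic-linear-combination characterization, and run the same Fourier-pairing argument there.
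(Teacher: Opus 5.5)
Your overall strategy is sound: build a linear functional from the $P_\alpha$ via the Fischer pairing, make it kill everything of the form $\partial_i(\cdot)$, and let the Euler operator produce $-n/d$. This is the paper's argument in dual form. But Step 1, as you set it up, fails for $r\ge 2$.

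\textbf{Where Step 1 breaks.} A functional on $N$ of the product form $\Lambda(g\,Q(s)f^{s-\alpha})=\langle g,P_\alpha\rangle\,\epsilon_\alpha(Q)$ must respect $g\,Q\,f^{s-\alpha}=(gf_k)\,Q\,f^{s-\alpha-e_k}$ for each $k$ separately. By nondegeneracy of the pairing in each degree, this forces $\epsilon_\alpha(Q)\,P_\alpha=\epsilon_{\alpha+e_k}(Q)\,f_k(\partial)P_{\alpha+e_k}$ for every $k$ and $Q$. So each $f_k(\partial)P_{\alpha+e_k}$ individually must be proportional to $P_\alpha$, unless the $\epsilon$'s degenerate.

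Contracting the hypothesis with the Euler identity only gives the summed relation $\sum_k f_k(\partial)P_{\alpha+e_k}=\frac{n+d|\alpha|}{d}P_\alpha$. For $r=1$ this suffices: Lemma \ref{f_partial} together with $\epsilon_\alpha=c_\alpha\,\mathrm{ev}_{s=-n/d}$ and $c_\alpha=\frac{n+d\alpha}{d}c_{\alpha+1}$ makes your construction work.

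For $r\ge 2$ it does not. Take $f_1=x_1^2$, $f_2=x_2^2$ in $\mathbb C[x_1,x_2]$, and $P_{(a,b)}=c_{a+b}(x_1^2+x_2^2)^{a+b}$ with $c_j=4(j+1)c_{j+1}$. This family satisfies the hypothesis; it is the family of $x_1^2+x_2^2$ transported as in Theorem \ref{linear_span}. Here $P_{(0,1)}\propto x_1^2+x_2^2$, while $f_1(\partial)P_{(1,1)}\propto 3x_1^2+x_2^2$. Testing compatibility against $g=x_1^2-3x_2^2$ forces $\epsilon_{(0,1)}=0$. Then $\Lambda(f^s)=\Lambda(f_2\cdot f^{s-e_2})=\langle f_2,P_{(0,1)}\rangle\,\epsilon_{(0,1)}(1)=0$, which destroys your property (2). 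No choice of specialization of the $s_k$ rescues the ansatz. Your fallback via $\sum_k y_kf_k$ is not developed and would need its own certificate.

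\textbf{The missing idea.} Do not define the functional on $N$ through representatives. Define it on $\mathcal D_{x,t}f^s\cong(i_{\mathbf f})_+\mathcal O_X$, where $f^s=\prod_k f_k^{s_k}$. This module contains both sides of the functional equation, and it is a \emph{free} $\mathbb C[x]$-module on the elements
$\partial_t^\alpha f^s=(-1)^{|\alpha|}\prod_k s_k(s_k-1)\cdots(s_k-\alpha_k+1)\,f^{s-\alpha}$.
Set $\Lambda(\sum_\alpha g_\alpha\,\partial_t^\alpha f^s)=\sum_\alpha\langle P_\alpha,g_\alpha\rangle$. Then:
\begin{itemize}
\item Well-definedness is automatic.
\item The $s_k$ are not specialized; they act by $s_k\,\partial_t^\alpha f^s=-f_k\,\partial_t^{\alpha+e_k}f^s+\alpha_k\,\partial_t^\alpha f^s$.
\item Since $\partial_if^s=-\sum_k\partial_if_k\,\partial_{t_k}f^s$, the identity $\Lambda\circ\partial_i=0$ is literally the hypothesis $x_iP_\alpha=\sum_k\partial_if_k(\partial)P_{\alpha+e_k}$.
\end{itemize}

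Step 3 should then be done by homogeneity rather than by matching binomial factors. Use weights $x_i\mapsto 1$, $\partial_i\mapsto -1$, $\partial_{t_k}\mapsto -d$, $s_k\mapsto 0$. Then $\Lambda$ vanishes off weight $0$, and every BMS generator $\prod_{c_k<0}\binom{s_k}{-c_k}f^{s+c}$ has weight $d$. So only the weight $-d$ parts of the coefficients contribute, and these lie in $\sum_i\partial_i\mathcal D_X[s]$.

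Step 4 then goes through as you describe, with $\Lambda(f^s)=P_0\neq 0$. This is exactly the paper's proof: the quotient $\mathcal M$ by $\sum_i\partial_i\mathcal D_{x,t}f^s$ with the Euler argument, the coordinate change $t\mapsto t-f$ giving the $\mathbb C[x][\partial_t]$ description, and the ``if'' half of Proposition \ref{PDE_dual}, which says precisely that this $\Lambda$ kills the relations.
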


The proof begins with a $\mathcal D$-module associated with the generators of $I$. A weighted Euler argument turns the desired root into the nonvanishing of a distinguished class in a quotient of this module. After a change of coordinates, the vanishing of that class is equivalent to the existence of a finite polynomial solution of an auxiliary PDE system. Duality with respect to the Fischer pairing converts the failure of such a finite solution into the family $\{P_\alpha\}$ in Theorem A.

Our principal determinantal application concerns maximal minors.

\begin{maintheoremB}[Theorem \ref{n/d_of_maximal_minors}]
	Let $I_m$ be the ideal of maximal minors of a generic $m\times n$ matrix, where $m\leq n$. Then the generators of $I_m$ satisfy the criterion in Theorem A; in particular, $b_{I_m}(-n)=0$.
\end{maintheoremB}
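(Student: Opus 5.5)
The plan is to construct the family $\{P_\alpha\}$ of Theorem A explicitly from the minors themselves, so that the PDE system becomes a family of Cayley/Capelli-type identities. Write $X=(x_{ab})$ for the generic $m\times n$ matrix, so there are $mn$ variables, and $d=m$. Then $-(mn)/m=-n$, and Theorem B will follow from Theorem \ref{PDE_theorem} once the criterion is verified. The generators are the maximal minors $\Delta_J$, indexed by $m$-subsets $J\subseteq\{1,\ldots,n\}$. For a variable $x_{ab}$ with $b\in J$, the polynomial $\partial_{ab}\Delta_J$ is, up to sign, the cofactor $C^J_{ab}$ of $X_J$; when $b\notin J$ it is $0$. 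The ansatz is
\[
P_\alpha = c_\alpha \prod_J \Delta_J^{\alpha_J},
\]
with scalars $c_\alpha$ to be determined and $c_0=1$. This is homogeneous of degree $m|\alpha|$, as required.

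\textbf{Step 1 (base case $\alpha=0$).} Take $P_0=1$ and $P_{e_J}=c\,\Delta_J$. The operator $C^J_{ab}(\partial)$ has degree $m-1$ and consists of the monomials complementary to $x_{ab}$ in the Laplace expansion of $\Delta_J$ along row $a$. Hence $C^J_{ab}(\partial)\Delta_J=\pm(m-1)!\,x_{ab}$, with the signs cancelling against those in $\partial_{ab}\Delta_J$. Summing over the $\binom{n-1}{m-1}$ subsets $J\ni b$ gives $x_{ab}$ after choosing $c=\big((m-1)!\binom{n-1}{m-1}\big)^{-1}$. This confirms that the normalization is consistent.

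\textbf{Step 2 (general $\alpha$).} Here we need
\[
\sum_{J\ni b}C^J_{ab}(\partial)\Big(c_{\alpha+e_J}\Delta^{\alpha+e_J}\Big)=c_\alpha\, x_{ab}\,\Delta^{\alpha}.
\]
I would use $GL_m\times GL_n$-equivariance. The operator $\sum_J \partial_{ab}\Delta_J(\partial)\otimes(\cdot)$ is equivariant, and $x_{ab}$ transforms as the standard representation, so it suffices to check the identity on highest-weight data. Concretely, I would expand $C^J_{ab}(\partial)$ applied to a product of minors by a Leibniz/Laplace rule. The leading term, where all $m-1$ derivatives fall on one factor $\Delta_J$, reproduces Step 1. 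The mixed terms should then be reorganized, using the Plücker relations and the rectangular Cayley identity $\det(\partial_J)\Delta_J^{s+1}=(s+1)\cdots(s+m)\Delta_J^s$, into multiples of $x_{ab}\Delta^\alpha$. The constants $c_\alpha$ would be chosen recursively in $|\alpha|$, for instance as products of the Cayley factors, possibly weighted by multinomial coefficients in $\alpha$.

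\textbf{Main obstacle.} The main difficulty is Step 2. For a fixed $\alpha$ the mixed terms need not collapse to a multiple of $x_{ab}\Delta^\alpha$, so the pure-monomial ansatz may fail. In that case the first fallback is to let $P_\alpha$ be a suitable $GL_m\times GL_n$-invariant combination of products of minors. The natural candidate is the component of $\operatorname{Sym}$ of the Plücker span of type $(|\alpha|^m)$ in the Cauchy decomposition $\mathbb C[X]=\bigoplus_\lambda S_\lambda\mathbb C^m\otimes S_\lambda\mathbb C^n$, twisted by $\alpha$. On this component the Capelli identity makes $\sum_J\Delta_J(x)\Delta_J(\partial)$ act by the explicit scalar $\prod_{i=1}^m(|\alpha|+n-i+1)$-type eigenvalue, which is nonzero. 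This nonvanishing is exactly what is needed to solve the recursion for $c_\alpha$ at every step.
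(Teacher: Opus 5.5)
Your ansatz $P_\alpha=c_\alpha\prod_J\Delta_J^{\alpha_J}$ is exactly the one that works (it is the paper's choice). Step 1 is also correct: $(m-1)!\binom{n-1}{m-1}=\prod_{k=1}^{m-1}(n-k)$, which is the $|\alpha|=0$ case of the general constant.

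\textbf{The gap is Step 2.} Step 2 is the entire content of the theorem, and you do not prove it. Saying the mixed terms ``should be reorganized'' via Pl\"ucker relations and the Cayley identity is a hope, not an argument, and your last paragraph concedes the ansatz might fail (it does not). What is needed is
\[
\sum_{J}(\partial_{ab}\Delta_J)(\partial)\bigl(\Delta_J\Delta^{\alpha}\bigr)=\mu_{|\alpha|}\,x_{ab}\Delta^{\alpha},\qquad \mu_k=\prod_{j=1}^{m-1}(k+n-j)\neq 0 .
\]
The scalar must be nonzero and depend only on $|\alpha|$. Then taking $c_\alpha$ depending only on $|\alpha|$, with $c_{k+1}=c_k/\mu_k$, solves condition (3). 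Multinomial weights in $c_\alpha$ would break this, since the left side would no longer be one operator applied to $\Delta^\alpha$.

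\textbf{The missing idea is a commutator trick.} Put $\Omega=\sum_J\Delta_J(\partial)\Delta_J(x)$, with multiplication first and differentiation second. Then $[\Omega,x_{ab}]=\sum_J(\partial_{ab}\Delta_J)(\partial)\Delta_J(x)$, so the left side equals $\Omega(x_{ab}\Delta^\alpha)-x_{ab}\Omega(\Delta^\alpha)$. Now use the Capelli-type identity $\Omega=\mathrm{cdet}\bigl(E+\mathrm{diag}(n,n-1,\dots,n-m+1)\bigr)$ with $E_{pq}=\sum_l x_{pl}\partial_{ql}$. Combine it with $E_{pq}\Delta^\alpha=\delta_{pq}|\alpha|\Delta^\alpha$ and $E_{pq}(x_{1b}\Delta^\alpha)=\delta_{pq}|\alpha|x_{1b}\Delta^\alpha+\delta_{q1}x_{pb}\Delta^\alpha$. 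Only $\sigma=\mathrm{id}$ contributes on both vectors, because the factors with column index $q\ge 2$ act first. This gives eigenvalues $\prod_{k=1}^m(|\alpha|+n-k+1)$ and $(|\alpha|+n+1)\prod_{k=2}^m(|\alpha|+n-k+1)$, whose difference is $\mu_{|\alpha|}$. General $a$ follows by permuting rows, under which $\Omega$ is invariant.

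\textbf{What your equivariance remark gives.} It can be made rigorous, but only for proportionality. Products of $k$ maximal minors span exactly the $\det^k\otimes S_{(k^m)}\mathbb C^n$ component, and $x_{ab}\Delta^\alpha$ lies in the $(k+1,k^{m-1})$ component. Pieri plus Cauchy make the relevant Hom space one-dimensional, so the left side is $\mu_k\,x_{ab}\Delta^\alpha$ for some scalar $\mu_k$. Schur's lemma cannot tell you $\mu_k\neq 0$, and that is precisely what must be computed.

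\textbf{Why the fallback does not help.} Projecting onto the $(k^m)$ component changes nothing, since the products of minors already lie there. The nonvanishing you invoke, a Capelli eigenvalue on that single component, is not the relevant quantity. What matters is the difference of the eigenvalues of $\Omega$ on the $(k+1,k^{m-1})$ and $(k^m)$ components. The operator ordering also matters. $\sum_J\Delta_J(x)\Delta_J(\partial)$ acts on the $(k^m)$ component by $\prod_{i=1}^m(k+m-i)$, not $\prod_{i=1}^m(k+n-i+1)$. Its commutator with $x_{ab}$ is $\sum_J\Delta_J(x)(\partial_{ab}\Delta_J)(\partial)$, which is not the operator in condition (3).
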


The Bernstein-Sato polynomial appearing in Theorem B was previously computed completely by L\H{o}rincz, Raicu, Walther, and Weyman. After transposing their matrix convention, \cite[Theorem 4.1]{Bernstein_for_maximal_minors} gives
$b_{I_m}(s)=\prod_{j=n-m+1}^{n}(s+j)$
for the ideal $I_m$ of maximal minors of a generic $m\times n$ matrix with $m\leq n$. In particular, $s+n$ divides $b_{I_m}(s)$, and hence $b_{I_m}(-n)=0$. Thus Theorem~B recovers, to this extent, one consequence of their stronger formula by a different method: instead of computing the entire Bernstein-Sato polynomial, our argument constructs an explicit polynomial PDE certificate for the distinguished root $-n$. This method yielding this distinguished root is enough to prove the strong monodromy conjecture for maximal minors of determinantal varieties (see Remark \ref{SMC_for_det}). 

The criterion is also stable under several natural operations, including Thom-Sebastiani products and sums. Note that the Bernstein-Sato polynomial for ideals satisfies the multiplicative Thom-Sebastiani property by \cite[Theorem 1.4]{Xu_Thom}. Write (P) for the existence of the polynomial family occurring in Theorem A.

\begin{maintheoremC}[Theorems \ref{linear_span}, \ref{ideal_sum}, \ref{Tom_product} and \ref{Tom_sum}]
	The following assertions hold.
	\begin{enumerate}
		\item If $g_1,\ldots,g_l$ and $f_1,\ldots,f_r$ are homogeneous of the same degree, each $g_a$ belongs to the linear span of $f_1,\ldots,f_r$, and $g_1,\ldots,g_l$ satisfy (P), then the $f_1,\ldots,f_r$ also satisfy (P).
		
		\item Assume $f_{1},....,f_{r} \in \mathbb{C}[x_{1},...,x_{n}]$ and $g_{1},...,g_{l} \in \mathbb{C}[y_{1},...,y_{m}]$ are homogeneous polynomials of degree $d$. If $f_{1},...,f_{r}$ and $g_{1},...,g_{l}$ both satisfy (P), then regarded as polynomials in $\mathbb{C}[x_{1},...,x_{n},y_{1},...,y_{m}]$, the polynomials $f_{1},...,f_{r},g_{1},...,g_{l}$ also satisfy (P).
		
		\item If $f\in\mathbb C[x_1,\ldots,x_n]$ and $g\in\mathbb C[y_1,\ldots,y_m]$ have degrees $d$ and $e$, respectively, each satisfies (P), and $n/d=m/e$, then $fg$ satisfies (P).
		
		\item Suppose $f \in \mathbb{C}[x_{1},...,x_{n}],g \in \mathbb{C}[y_{1},...,y_{m}]$ are homogeneous polynomials of degree $d(d \ge 2)$. If $f$ and $g$ satisfy (P) as a single polynomial, then $f+g$ also satisfies (P) as a single polynomial in $\mathbb{C}[x_{1},...,x_{n},y_{1},...,y_{m}]$.
	\end{enumerate}
\end{maintheoremC}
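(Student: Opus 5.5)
The plan for all four parts is the same: write the new family explicitly in terms of the given families, then check the defining identity $x_iP_\alpha=\sum_k\partial_if_k(\partial)P_{\alpha+e_k}$ one variable at a time. In each case the checks are that the degrees are right and that the degree-$0$ member is a nonzero constant. Throughout, $\{P_\alpha\}$ and $\{Q_\beta\}$ denote given families satisfying (P).

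\emph{Part (2).} Set $R_{(\alpha,\beta)}=P_\alpha(x)Q_\beta(y)$. It is homogeneous of degree $d(|\alpha|+|\beta|)$, and $R_0=P_0Q_0\neq0$. For a variable $x_i$, each $\partial_{y_j}g_a$ has no $x$, so it contributes nothing to the identity, and each $\partial_{x_i}f_k(\partial)$ acts only on the $P$-factor. The $x_i$-identity therefore follows from the one for $\{P_\alpha\}$; the $y_j$-identities are symmetric.

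\emph{Part (3).} First extract a contraction identity from (P). Apply $\partial_i$ to $x_iP_a=\partial_if(\partial)P_{a+1}$ and sum over $i$. Since $\deg P_a=da$ and $\sum_i\partial_i\,\partial_if(\partial)=d\,f(\partial)$ by Euler's formula, this gives
\[
f(\partial)P_{a+1}=\bigl(\tfrac nd+a\bigr)P_a .
\]
Similarly $g(\partial)Q_{b+1}=\bigl(\tfrac me+b\bigr)Q_b$. Now set $R_c=\lambda_cP_cQ_c$. Using $\partial_i(fg)(\partial)=\partial_if(\partial)\,g(\partial)$ for $x$-variables, we get
\[
\partial_i(fg)(\partial)R_{c+1}=\lambda_{c+1}\bigl(\tfrac me+c\bigr)x_iP_cQ_c .
\]
For $y$-variables the factor is $\tfrac nd+c$ instead. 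The slope condition $n/d=m/e$ is exactly what makes these two factors equal, so a single recursion $\lambda_{c+1}=\lambda_c/(\tfrac nd+c)$ with $\lambda_0=1$ works for all variables. The degree is $(d+e)c$, as required.

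\emph{Part (4).} Take $R_c=\sum_{a+b=c}P_a(x)Q_b(y)$. Here $P_0$ and $Q_0$ are nonzero constants, and $\partial_if$ has degree $d-1\ge1$, so $\partial_if(\partial)P_0=0$. Hence
\[
\partial_i(f+g)(\partial)R_{c+1}=\sum_{a\ge1}x_iP_{a-1}Q_{c+1-a}=x_iR_c ,
\]
and symmetrically for the $y_j$. Equal coefficients are forced: matching terms requires $\mu_{a,b}=\mu_{a+1,b}=\mu_{a,b+1}$. We also have $R_0=P_0Q_0\neq0$.

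\emph{Part (1).} I expect this to be the main bookkeeping step, and I will handle it with exponential generating functions. Write $g_a=\sum_kc_{ak}f_k$, and let the $g_a$ have the family $\{Q_\beta\}$. Put
\[
G(u)=\sum_\beta Q_\beta\,u^\beta/\beta! .
\]
In this language the condition (P) becomes $x_iG=\sum_a\partial_ig_a(\partial)\,\partial_{u_a}G$, read as an identity of formal power series in $u$. Substitute $u_a=\sum_kc_{ak}t_k$ to get $F(t)=G(u(t))$. Then $\partial_{t_k}F=\sum_ac_{ak}\partial_{u_a}G$, so
\[
\sum_k\partial_if_k(\partial)\,\partial_{t_k}F=\sum_a\partial_ig_a(\partial)\,\partial_{u_a}G=x_iF .
\]
Define $P_\alpha=\partial_t^\alpha F|_{t=0}$. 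Each $P_\alpha$ is a finite linear combination of the $Q_\beta$ with $|\beta|=|\alpha|$, so it is homogeneous of degree $d|\alpha|$. Reading off coefficients gives the required identity, and $P_0=Q_0\neq0$. The point to verify carefully is that the substitution is legitimate coefficientwise: it is a linear change of formal variables, so each coefficient is a finite sum.
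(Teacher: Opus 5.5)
Your proposal is correct and follows the paper's proofs essentially step for step: the product family $P_\alpha(x)Q_\beta(y)$ in (2), which the paper writes as the product $\mathcal{P}\mathcal{Q}$ of exponential generating functions; the rescaled diagonal family $\lambda_cP_cQ_c$ in (3), driven by the contraction identity of Lemma~\ref{f_partial}; the Cauchy-product family in (4); and the linear substitution $u_a=\sum_k c_{ak}t_k$ into the exponential generating function in (1). One wording fix in (2): the $g$-terms drop out of the $x_i$-identity because $\partial_{x_i}g_a=0$, not because $\partial_{y_j}g_a$ is free of $x$.
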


The paper is organized as follows. Section \ref{sec2} recalls Bernstein-Sato polynomials for functions and ideal sheaves, together with motivic zeta functions and the monodromy conjecture. Section \ref{sec3} proves the PDE criterion. Section \ref{sec4} treats isolated singularities and maximal minors and establishes the stability properties summarized in Theorem C.
	
	\section{Bernstein-Sato Polynomials and the Monodromy Conjecture}\label{sec2}
	\subsection{Bernstein-Sato Polynomials of Regular Functions}
	Throughout Section 2, let $X$ be a smooth complex algebraic variety over $\mathbb C$, let $\mathcal O_X$ be its structure sheaf, and let $\mathcal D_X$ be the sheaf of $\mathbb C$-linear differential operators on $X$. For every open subset $U\subseteq X$, write $\mathcal D_U=\mathcal D_X|_U$. All constructions below are local on $X$. Let us start with the case of a single polynomial. One can refer to \cite{Popa_ref} to get a detailed introduction.
	\begin{definition}
	Let $f\in\Gamma(X,\mathcal O_X)$ be a nonconstant regular function. There exist $P(s) \in \mathcal{D}_{X}[s]$ and $b(s) \in \mathbb{C}[s]$, such that $P(s)f^{s+1}=b(s)f^{s}$, where $s$ is an independent variable(see, for example, \cite{Popa_ref}). All these possible $b(s)$ form an ideal, and since $\mathbb{C}[s]$ is a principal ideal domain, there exists a unique monic generator of this ideal. We will denote this polynomial by $b_{f}(s)$ and it is called the Bernstein-Sato polynomial of $f$.
	\end{definition}
	
	\begin{remark}
	This is the classical definition of Bernstein and Sato; the existence in the analytic and holonomic framework is treated by Kashiwara \cite[Theorem 1]{Rationality_of_Roots_of_B-Function_Kashiwara}, and Malgrange recalls the same definition in \cite[Theorem 3.1]{Malgrange_Bernstein_eigenvalue}.
	\end{remark}

	\begin{remark}
	By definition, the Bernstein-Sato polynomial of $f$ is the minimal polynomial of the action of $s$ on the sheaf quotient $\frac{\mathcal D_X[s]f^s}{\mathcal D_X[s]f^{s+1}}$.
	\end{remark}
	
	Kashiwara proved that the roots of the Bernstein--Sato polynomial are negative rational numbers.
	
	\begin{theorem}[\cite{Rationality_of_Roots_of_B-Function_Kashiwara}]
	The roots of Bernstein-Sato polynomials are negative rational numbers.
	\end{theorem}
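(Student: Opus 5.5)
This is Kashiwara's rationality theorem, and the route I would follow is his: embedded resolution of singularities combined with holonomic $\mathcal D$-module theory, in three steps.

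\emph{Step 1: reduce to a resolution.} The question is local, so I work near a point of $X$. I take an embedded resolution $\pi\colon Y\to X$ of the hypersurface $f=0$ (Hironaka). Then $f\circ\pi$ is locally a monomial $u\,y_1^{a_1}\cdots y_k^{a_k}$ with $u$ a unit and $a_j\ge 1$ integers. For such a monomial the functional equation is explicit: after absorbing the unit by a change of coordinates, $\prod_j \partial_{y_j}^{a_j}$ applied to $(y^a)^{s+1}$ gives $y^{as}$ times the product of the linear factors $\prod_j\prod_{i=1}^{a_j}(a_js+i)$. Hence on $Y$ the roots are of the form $-i/a_j$, which are negative rationals.

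\emph{Step 2: descend along $\pi$.} The main obstacle is transferring a $b$-function from $Y$ back to $X$, since $\pi$ is not an isomorphism. Kashiwara's idea is to use the proper direct image $\int_\pi$ of the $\mathcal D_Y[s]$-module $\mathcal D_Y[s](f\circ\pi)^s$. One shows that $\mathcal D_X[s]f^s$ is a subquotient of a module built from $H^0\int_\pi$ of that module, twisted by finitely many integer shifts $s\mapsto s+k$. The key input is that on $Y$ the system has a $b$-function $\tilde b(s)$ whose roots are rational, as in Step 1. Coherence of proper direct images and the functoriality of the $s$-action then give that $b_f(s)$ divides $\prod_{k=0}^{N}\tilde b(s+k)$ for some $N$. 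Consequently every root of $b_f$ is of the form $-i/a_j - k$, hence rational. This comparison is the delicate part: one has to check that the direct image is coherent over $\mathcal D_X[s]$ and that the section $f^s$ lies in the image. I would follow Kashiwara's argument with the graph embedding to make this precise.

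\emph{Step 3: negativity.} This is separate and elementary. Suppose $b_f(\lambda)=0$ with $\lambda\ge0$ rational. I use Bernstein's analytic continuation of $|f|^{2s}$ together with the standard argument that $s=-1$ is always a root, then Malgrange and Kashiwara's identification of the roots with $V$-filtration eigenvalues. Alternatively, and more directly, I evaluate the functional equation at $s=\lambda\in\mathbb{Z}_{\ge0}$-shifted points on regular positive-real values to see that no factor can vanish there. Concretely, for real $s\ge0$ the map $f^{s+1}\mapsto f^s$ is realizable, and minimality of $b_f$ forbids nonnegative roots. Combining this with Step 2, every root is a negative rational number.
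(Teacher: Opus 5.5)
Your Steps 1--2 follow Kashiwara's resolution argument. The paper gives no proof of this theorem, only the citation to Kashiwara, so as an outline of rationality those steps are on par with the source. Step 3, however, does not prove negativity, for the following reasons.
\begin{itemize}
\item That $-1$ is always a root says nothing about the other roots.
\item The dictionary between roots of $b_f$ and eigenvalues of $s$ on graded pieces of the $V$-filtration carries no sign information. To get a sign you would need $\mathcal D_X[s]f^s\subset V^{>0}$, which is essentially the statement to be proved.
\item The analytic continuation of $|f|^{2s}$ only places poles among roots shifted by $-\mathbb Z_{\ge 0}$. It gives no way to exclude a root $\lambda\ge 0$.
\item Specializing $P(s)f^{s+1}=b_f(s)f^s$ at a root $\lambda$ just gives $P(\lambda)f^{\lambda+1}=0$, which is not a contradiction.
\item The existence, for a fixed $\lambda\ge 0$, of some $Q$ with $Qf^{\lambda+1}=f^{\lambda}$ has no bearing on minimality. $b_f$ is governed by a single operator in $\mathcal D_X[s]$ valid for indeterminate $s$, and solvability at one value of $s$ does not force $b_f(\lambda)\neq 0$.
\end{itemize}

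The repair is to delete Step 3. Negativity is already contained in Step 2: the roots you obtain there, $-i/a_j-k$ with $1\le i\le a_j$ and $k\ge 0$, are negative. But this depends on the point you deferred, namely whether $f^s$ itself lies in the image of the pushforward. Let $\mathcal M'$ be the image of $H^0\int_\pi\mathcal D_Y[s](f\circ\pi)^s$ in $\mathcal O_X[f^{-1},s]f^s$.
\begin{itemize}
\item Functoriality gives $\tilde b(s)\mathcal M'\subset t\mathcal M'$.
\item Using $ts=(s+1)t$, this gives $\prod_{j=0}^{N}\tilde b(s+j)\mathcal M'\subset t^{N+1}\mathcal M'$.
\item Coherence gives $\mathcal M'\subset \mathcal D_X[s]f^{s-k_0}$ locally.
\end{itemize}
If $f^s\in\mathcal M'$, take $N=k_0$. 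Then $t^{k_0+1}\mathcal M'\subset\mathcal D_X[s]f^{s+1}$, so $b_f(s)\mid\prod_{j=0}^{k_0}\tilde b(s+j)$. Only nonnegative shifts occur, and negativity follows.

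If you only know $f^{s+N}\in\mathcal M'$, the same computation gives only $b_f(s)\mid\prod_{j=-N}^{k_0}\tilde b(s+j)$. This weaker fact is all one gets for free from $\mathcal O_X[f^{-1},s]f^s=\bigcup_k t^{-k}\mathcal M'$. The factors $\tilde b(s-j)$ with $j\ge 1$ can have nonnegative roots, so only rationality survives.

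So you must show that $f^s$ itself lifts to the pushforward. One way is through the canonical section built from $\pi^*(dx)$, equivalently the natural map $\mathcal O_X\to H^0\int_\pi\mathcal O_Y$ carried through the graph embedding. Once that is done, rationality and negativity both come out of Steps 1--2 together, as in Kashiwara.
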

	
	There also exists a local version of the Bernstein-Sato polynomial, with the following definition.
	\begin{definition}
		Let $x\in X$, and let $f_x\in\mathcal O_{X,x}$ be the germ at $x$ of a nonzero regular function $f$. The local Bernstein--Sato polynomial of $f$ at $x$, denoted by $b_{f,x}(s)$, is the unique monic polynomial of smallest degree such that
		$$
		b_{f,x}(s)f_x^s
		\in
		\mathcal D_{X,x}[s]\cdot f_x^{s+1}.
		$$
		Equivalently, there exists $P_x(s)\in\mathcal D_{X,x}[s]$
		such that $P_x(s)f_x^{s+1}=b_{f,x}(s)f_x^s$.
	\end{definition}
	
	For an open subset $U\subseteq X$, let $b_{f,U}(s)$ denote the Bernstein-Sato polynomial obtained by applying Definition~2.1 to $f|_U$. In particular, $b_f(s)=b_{f,X}(s)$.
	
	\begin{theorem}[Local-global relation for one function]\label{thm:local-global-function}
		Let $f\in\Gamma(X,\mathcal O_X)$ be a nonzero regular function. For every open subset $U\subseteq X$, $b_{f,U}(s)=\operatorname{lcm}_{x\in U}b_{f,x}(s)$.
		Consequently, if $V\subseteq U$ is open, then $b_{f,V}(s)\mid b_{f,U}(s)$
		and for every $x\in U$, $b_{f,x}(s)\mid b_{f,U}(s)$.
		In particular, $b_f(s)=\operatorname{lcm}_{x\in X}b_{f,x}(s)$,
		and hence $b_{f,x}(s)\mid b_f(s)$
		for every $x\in X$.
	\end{theorem}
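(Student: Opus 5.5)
We will view $b_{f,U}(s)$ as the minimal polynomial of $s$ acting on the quotient sheaf $\mathcal M=\mathcal D_X[s]f^s/\mathcal D_X[s]f^{s+1}$ restricted to $U$, as in the remark after Definition 2.1, and reduce everything to properties of this coherent module. The key input is that $\mathcal M$ is a coherent $\mathcal D_X[s]$-module. Being a quotient of $\mathcal D_X[s]f^s$, it is in fact locally generated by the single section $[f^s]$. Consequently, a polynomial $b(s)$ annihilates $\mathcal M|_U$ if and only if $b(s)[f^s]=0$ in $\mathcal M|_U$. This holds exactly when the image of $b(s)[f^s]$ vanishes in every stalk $\mathcal M_x$ for $x\in U$.

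We first identify stalks. Since localization of $\mathcal D_X[s]$ at $x$ is exact and $\mathcal D_{X,x}[s]f^s_x$ is the stalk of the subsheaf $\mathcal D_X[s]f^s$ of $\mathcal O_X[f^{-1},s]f^s$, we get
\begin{equation*}
\mathcal M_x\cong \mathcal D_{X,x}[s]f_x^s/\mathcal D_{X,x}[s]f_x^{s+1}.
\end{equation*}
Hence $b(s)$ kills $[f^s]_x$ if and only if $b_{f,x}\mid b$. This gives
\begin{equation*}
\operatorname{Ann}_{\mathbb C[s]}\Gamma\text{-}(\mathcal M|_U)=\bigcap_{x\in U}(b_{f,x}(s)).
\end{equation*}
A sheaf section is zero iff all its germs vanish, so we obtain the formula $b_{f,U}=\operatorname{lcm}_{x\in U}b_{f,x}$, provided the lcm is taken over a family whose intersection ideal is nonzero.

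The main obstacle is showing that this intersection is nonzero, i.e. that only finitely many distinct $b_{f,x}$ occur, or at least that some nonzero $b$ works on $U$. We handle this using existence of a global Bernstein–Sato polynomial on affine opens. Cover $X$ by finitely many affine opens $X_j$ (possible since $X$ is a variety, hence quasi-compact). On each $X_j$, Bernstein's theorem in the form quoted in Definition 2.1 gives a nonzero $b_j$ with $b_j f^s\in \mathcal D[s]f^{s+1}$. Then $\prod_j b_j$ (or their lcm) is a nonzero element of every stalk annihilator, so $\bigcap_x (b_{f,x})\neq 0$. Since each $b_{f,x}$ divides this fixed polynomial, only finitely many distinct local polynomials occur, and the lcm is well defined and generates the intersection. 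Note also that $U$ need not be affine; the argument only uses that a section of a sheaf vanishes iff its stalks do, so no global-sections subtlety arises.

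The consequences then follow formally. If $V\subseteq U$, then the lcm over $V$ divides the lcm over $U$, since it is taken over a subset of the same family. Taking $V$ to be a neighborhood of $x$ and using $b_{f,x}\mid b_{f,V}$ gives $b_{f,x}\mid b_{f,U}$. Setting $U=X$ yields $b_f=\operatorname{lcm}_{x\in X}b_{f,x}$ and $b_{f,x}\mid b_f$.
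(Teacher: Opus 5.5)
The paper states this theorem as standard background and gives no proof of it, so there is no argument in the paper to compare yours against. Your proof is correct and complete. It works in the framework of Remark 2.3 and proceeds in three steps:
\begin{enumerate}
\item Centrality of $s$ reduces annihilation of $\mathcal M|_U$ to annihilation of the global generator $[f^s]$.
\item Exactness of stalks identifies $\mathcal M_x$ with $\mathcal D_{X,x}[s]f_x^s/\mathcal D_{X,x}[s]f_x^{s+1}$.
\item Quasi-compactness, together with existence of $b$-functions on affine opens, shows that $\bigcap_{x\in U}(b_{f,x}(s))$ is a nonzero ideal. That ideal is therefore generated by the $\operatorname{lcm}$ of the finitely many distinct $b_{f,x}$.
\end{enumerate}

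One point deserves to be made explicit. Your remark that no global-sections subtlety arises for non-affine $U$ holds because you fix the sheaf-theoretic meaning of $b_{f,U}$ (minimal polynomial of $s$ on $\mathcal D_X[s]f^s/\mathcal D_X[s]f^{s+1}$). Definition 2.1 could instead be read literally, with $P(s)\in\Gamma(U,\mathcal D_U)[s]$. The two readings agree when $U$ is affine. Indeed, $\mathcal D_X[s]f^s$, $\mathcal D_X[s]f^{s+1}$ and $\mathcal M$ are quasi-coherent $\mathcal O_X$-modules under left multiplication, and $\Gamma(U,-)$ is exact on such modules. For non-affine $U$, gluing the local operators into a single global one is not automatic, so the sheaf reading is the right convention to adopt, as you did.

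Finally, if $x$ is meant to range only over closed points, your stalkwise vanishing criterion still applies. The support of a section is closed, and a nonempty closed subset of a variety contains a closed point.
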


	There is another equivalent definition of Bernstein-Sato polynomial by V-filtrations. The V-filtration is originally due to Kashiwara and Malgrange; see Kashiwara \cite[Theorem 1]{Kashiwara_Bernstein_and_eigenvalue_of_monodromy} and Malgrange \cite[Section 3, Lemma 3.4 and Theorem 3.4]{Malgrange_Bernstein_eigenvalue}. A modern rationally indexed convention compatible with Hodge module theory is used in Saito \cite[Section 3.1]{Sai88}. To introduce the V-filtration, we need some notations. Set $Y=X\times\mathbb A^1_{\mathbb C}$, and denote the coordinate on the second factor by $t$. Let $\mathcal D_Y$ be the sheaf of differential operators on $Y$, and let $\mathcal I=(t)\subseteq\mathcal O_Y$ be the ideal sheaf of $X\times\{0\}$.  We define a filtration $V^{\bullet}\mathcal{D}_{x,t}$ as 
	$$
	V^m\mathcal D_Y
	=
	\left\{
	P\in\mathcal D_Y
	\ \middle|\
	P\mathcal I^q\subseteq\mathcal I^{q+m}
	\text{ for every }q\in\mathbb Z
	\right\},
	$$
	where $\mathcal I^q=\mathcal O_Y$ for $q<0$.

	We define an operator $s:=-\partial_{t}t$ in $\mathcal{D}_{Y}$. This $s$ will be identified with the one in the definition of Bernstein-Sato polynomial. Now we give the definition of V-filtration.
	\begin{definition}
	Let $\mathcal{M}$ be a coherent $\mathcal{D}_{Y}$ module. A V-filtration along $X \times \{0\} \subset X \times \mathbb{C}$ on $\mathcal{M}$ is a decreasing filtration $V^{\bullet}\mathcal{M}$ indexed by rational numbers satisfying the following conditions.
	
	$(1)$ It is exhaustive, i.e. $\bigcup_{\alpha \in \mathbb{Q}}V^{\alpha}\mathcal{M}=\mathcal{M}$,
	
	$(2)$ It is discrete and left continuous. This means that there exists a positive integer $N$ large enough such that for every $i \in \mathbb{Z}$ and $\alpha \in (\frac{i}{N},\frac{i+1}{N})$, $V^{\alpha}\mathcal{M}$ is constant and $V^\alpha M=\bigcap_{\beta<\alpha}V^\beta M$ for every $\alpha\in\mathbb Q$.
	
	$(3)$ For every $\alpha \in \mathbb{Q}$ and $i \in \mathbb{Z}$, we have $V^{i}\mathcal{D}_{Y} \cdot V^{\alpha}\mathcal{M} \subset V^{i+\alpha}\mathcal{M}$.
	
	$(4)$ For every $\alpha \in \mathbb{Q}$, the operator $s+\alpha$ is nilpotent on $\mathrm{Gr}_{V}^{\alpha}\mathcal{M}:=V^{\alpha}\mathcal{M}/\bigcup_{\beta>\alpha}V^{\beta}\mathcal{M}$.
	
	$(5)$ For every $\alpha \in \mathbb{Q}$, $V^{\alpha}\mathcal{M}$ is finitely generated over $V^{0}\mathcal{D}_{Y}$.
	
	$(6)$ We have $t \cdot V^{\alpha}\mathcal{M}=V^{\alpha+1}\mathcal{M}$ for $\alpha>0$.
	\end{definition}
	
	The existence theorem and uniqueness theorem for V-filtration due to Kashiwara and Malgrange are as follows. 
	\begin{theorem}[Existence theorem]\cite[Theorem 1]{Kashiwara_Bernstein_and_eigenvalue_of_monodromy}, \cite[Lemma 3.3 and Theorem 3.4]{Malgrange_Bernstein_eigenvalue}
		Let $\mathcal{M}$ be a regular holonomic $\mathcal{D}_{Y}$-module with quasi-unipotent local monodromy along $\{t=0\}$. Then $\mathcal{M}$ admits a $V$-filtration.
	\end{theorem}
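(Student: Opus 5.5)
The statement is the Kashiwara--Malgrange existence theorem for the $V$-filtration on a regular holonomic $\mathcal D_Y$-module $\mathcal M$ with quasi-unipotent monodromy along $t=0$. I would construct $V^\bullet\mathcal M$ locally and then glue. By the uniqueness theorem (conditions (1)--(6) pin the filtration down), the local constructions agree on overlaps, so it suffices to work on an affine open subset with a good $V^0\mathcal D_Y$-lattice.

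\textbf{Step 1: local $b$-functions.} For each local section $u$ of $\mathcal M$, I would show that there is a nonzero $b_u(s)\in\mathbb C[s]$ with $b_u(s)u\in V^1\mathcal D_Y\cdot u$. This is the generalized Bernstein--Sato existence, and it follows from holonomicity: the module $V^0\mathcal D_Y u/V^1\mathcal D_Y u$ is a holonomic module on $X$ with an action of $s$, and the minimal polynomial of $s$ on it is nonzero, by the same holonomic-dimension argument used for $b_f$.

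\textbf{Step 2: rationality of the roots.} I would show that the roots of $b_u$ are rational. Here regularity and quasi-unipotence enter. Kashiwara's argument passes to a resolution or to the nearby-cycles description: the roots are the eigenvalues of $s$ on $\psi_t$, and $\exp(2\pi i\,\cdot)$ of them are eigenvalues of the monodromy, which are roots of unity. This is where the hypotheses are used, and I expect it to be the main obstacle. My first attempt would be to reduce, by a comparison theorem, to the case of a module of the form $\mathcal O[f^{-1}]f^s$-type pushforwards, where Kashiwara's rationality theorem stated above applies.

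\textbf{Step 3: define the filtration.} Fix a $\mathbb Q$-ordering on $\mathbb C$ via a section of $\mathbb C\to\mathbb C/\mathbb Z$ restricted to $\mathbb Q$. Define $V^\alpha\mathcal M$ as the set of sections $u$ such that every root $-\beta$ of $b_u$ satisfies $\beta\ge\alpha$, suitably saturated under $V^0\mathcal D_Y$. Equivalently, I would decompose a finitely generated $V^0\mathcal D_Y$-lattice $U$ by generalized eigenspaces of $s$ on $U/tU$ and shift by $t$-powers (Malgrange's lattice construction). Discreteness, condition (2), then follows from the finiteness of the root set.

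\textbf{Step 4: verify the axioms.} Conditions (1), (3) and (5) follow from the lattice construction and the coherence of $V^0\mathcal D_Y$. Condition (4) follows because $\prod(s+\beta)$ kills the graded pieces. Condition (6) follows from the invertibility of $\partial_t t$ away from $s=0$: for $\alpha>0$ the element $s=-\partial_t t$ acts invertibly on $\mathrm{Gr}^\alpha$, which gives surjectivity of $t$.
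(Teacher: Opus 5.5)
The paper gives no proof of this statement; it quotes it from Kashiwara and Malgrange. Your outline therefore has to be measured against their argument. The overall architecture matches Kashiwara's: $b$-functions, then $V^\alpha$ defined through roots or a lattice, then the axioms, then gluing by uniqueness.

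\textbf{The genuine gap is Step 2.} It is exactly where regularity and quasi-unipotence must enter, and as written it does not work.

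You derive rationality from ``the roots of $b_u$ are the eigenvalues of $s$ on $\psi_t$, and they exponentiate to monodromy eigenvalues.'' That identification is the Kashiwara--Malgrange comparison theorem $\mathrm{DR}(\mathrm{Gr}_V^\alpha\mathcal M)\simeq\psi_{t,e^{-2\pi\sqrt{-1}\alpha}}\mathrm{DR}(\mathcal M)$. It is a statement about an already constructed $V$-filtration, so invoking it before Step~3 is circular.

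Your fallback is not available either. You propose to reduce to $\mathcal O[f^{-1}]f^s$-type modules and apply the rationality theorem for $b_f$. But the hypothesis only constrains the monodromy along $t=0$, so $\mathcal M$ need not be of geometric origin. Moreover, the rationality proof for $b_f$ rests on a resolution of $f$, which says nothing about an arbitrary regular holonomic module.

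\textbf{The fix is to reverse the order.} Steps~1 and~3 need only holonomicity. Fix a total order on $\mathbb C$ (for instance lexicographic in real and imaginary parts). Build the filtration indexed by $A+\mathbb Z$, where $A$ is the finite root set of the $b$-function of a lattice. Your phrase ``a $\mathbb Q$-ordering via a section of $\mathbb C\to\mathbb C/\mathbb Z$'' conflates these two stages.

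Only then use regularity. By the comparison theorem, every $\alpha$ with $\mathrm{Gr}_V^\alpha\mathcal M\neq0$ yields an eigenvalue $e^{-2\pi\sqrt{-1}\alpha}$ of the monodromy on $\psi_t\mathrm{DR}(\mathcal M)$. Quasi-unipotence makes this a root of unity, so $\alpha\in\mathbb Q$, and condition~(2) holds with a common denominator.

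\textbf{Smaller issues.}
\begin{itemize}
\item \emph{Step~1 is circular as justified.} $V^0\mathcal D_Yu/V^1\mathcal D_Yu$ is a priori only a cyclic $\mathcal D_X[s]$-module, and its coherence or holonomicity over $\mathcal D_X$ is essentially the existence of $b_u$. This is Kashiwara's specializability theorem for holonomic modules, which comes from characteristic-variety estimates for $\mathrm{Gr}$ of a good $V$-filtration. It should be cited or proved as such.
\item \emph{Step~3 needs more than ``suitably saturated.''} With the ``set of sections $u$'' definition, closure under addition and finite generation over $V^0\mathcal D_Y$ are not automatic. This is why one works with coherent $V^0\mathcal D_Y$-lattices carrying a common $b$-function.
\item \emph{Step~4 only gives the graded statement.} Surjectivity of $t:\mathrm{Gr}_V^\alpha\to\mathrm{Gr}_V^{\alpha+1}$ comes from invertibility of $t\partial_t$ on $\mathrm{Gr}_V^{\alpha+1}$ for $\alpha\neq0$; invertibility of $s$ on $\mathrm{Gr}_V^\alpha$ gives injectivity instead. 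Either way this only yields $V^{\alpha+1}=tV^\alpha+V^{>\alpha+1}$. To get $tV^\alpha=V^{\alpha+1}$ you must combine this with discreteness and the fact, coming from the lattice construction, that $tV^\beta=V^{\beta+1}$ for $\beta\gg0$.
\end{itemize}
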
 

	\begin{theorem}[Uniqueness theorem]\cite[Theorem 1]{Kashiwara_Bernstein_and_eigenvalue_of_monodromy}
		If a coherent $\mathcal{D}_Y$-module $\mathcal{M}$ admits a Kashiwara--Malgrange $V$-filtration, then this filtration is unique.
		
		More explicitly, if $U^{\alpha}\mathcal{M}$ and $V^{\alpha}\mathcal{M}$ are two decreasing rational filtrations satisfying the defining properties above, then $U^{\alpha}\mathcal{M}=V^{\alpha}\mathcal{M}$
		for every $\alpha\in \mathbb{Q}$.
	\end{theorem}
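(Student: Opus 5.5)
Suppose $U^\bullet$ and $V^\bullet$ are two filtrations on $\mathcal M$ satisfying conditions (1)--(6). I would show $U^\alpha\mathcal M\subseteq V^\alpha\mathcal M$ for every $\alpha$; the reverse inclusion then follows by symmetry. The plan is the standard Kashiwara argument. First use finite generation to reduce to a statement about a single $\beta>\alpha$, then separate the two filtrations by a polynomial identity in $s$ together with coprimality of the relevant generalized eigenvalues.

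\textbf{Step 1 (comparability).} Fix $\alpha$. By (5), $U^\alpha\mathcal M$ is finitely generated over $V^0\mathcal D_Y$ (locally). By (1), each generator lies in some $V^{\gamma}\mathcal M$, so there is $\gamma$ with $U^\alpha\mathcal M\subseteq V^{\gamma}\mathcal M$, using (3) for the $V^0\mathcal D_Y$-action. Symmetrically, $V^\alpha\mathcal M\subseteq U^{\gamma'}\mathcal M$. Condition (6) for large $\alpha$, together with (3) for $t$ and $\partial_t$, shows these shifts are uniform in $\alpha$: one can take $U^{\alpha}\mathcal M\subseteq V^{\alpha-c}\mathcal M$ for a fixed $c$, at least locally.

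\textbf{Step 2 (inductive improvement).} Let $\beta$ be the largest index with $U^\alpha\mathcal M\subseteq V^\beta\mathcal M$; it exists by discreteness (2). Suppose $\beta<\alpha$. I would show that $U^\alpha\mathcal M\subseteq V^{>\beta}\mathcal M$, which contradicts the choice of $\beta$. Consider the image of $U^\alpha\mathcal M$ in $\mathrm{Gr}_V^\beta\mathcal M$. On it, $(s+\beta)^N$ acts as zero by (4).

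We also need a second polynomial killing this image. On $U^\alpha/U^{\alpha+1}$, the operator $\prod_{\alpha\le\delta<\alpha+1}(s+\delta)^N$ is nilpotent; this uses (2), (4) and finiteness of the jumps. Hence for some polynomial $b(s)$ whose roots all lie in $-[\alpha,\alpha+1)$, we get $b(s)U^\alpha\subseteq U^{\alpha+1}$. Iterating, $b(s)^kU^\alpha\subseteq U^{\alpha+k}$. By Step 1, $U^{\alpha+k}\subseteq V^{\alpha+k-c}\subseteq V^{>\beta}$ once $k$ is large. So $b(s)^k$ kills the image of $U^\alpha$ in $\mathrm{Gr}_V^\beta$.

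Now $-\beta\notin$ roots of $b$, since $\beta<\alpha$. Therefore $(s+\beta)^N$ and $b(s)^k$ are coprime in $\mathbb C[s]$, so some combination of them equals $1$. That combination acts as zero on the image, so the image is zero, i.e.\ $U^\alpha\subseteq V^{>\beta}$. This is the desired contradiction, and it gives $U^\alpha\subseteq V^\alpha$.

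\textbf{Main obstacle.} The delicate point is Step 1's uniform bound $U^{\alpha+k}\subseteq V^{\alpha+k-c}$. Condition (6) only holds for $\alpha>0$, and the filtrations may be unbounded below. I would handle this by first treating $\alpha>0$, where $t$-multiplication gives the shift via (6) and (3). For $\alpha\le0$ the required $k$ only needs $\alpha+k>0$, after which the same argument applies. Local finiteness in (5) guarantees that everything works on small open sets, and uniqueness is local, so this suffices.
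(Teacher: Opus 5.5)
The paper does not prove this statement; it only cites Kashiwara. Your outline follows the standard Kashiwara argument: comparability from finite generation and $t$-shifts, then killing the image of $U^\alpha\mathcal M$ in $\mathrm{Gr}_V^\beta\mathcal M$ by two coprime polynomials in $s$. The strategy is sound, but one step in Step 2 is false as written. Iterating $b(s)U^\alpha\mathcal M\subseteq U^{\alpha+1}\mathcal M$ does \emph{not} give $b(s)^kU^\alpha\mathcal M\subseteq U^{\alpha+k}\mathcal M$. On $\mathrm{Gr}_U^{\delta+1}\mathcal M$ the operator $s+\delta+1$ is nilpotent, so $s+\delta=(s+\delta+1)-1$ is invertible there. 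Hence $b(s)$ acts invertibly, not nilpotently, on $U^{\alpha+1}\mathcal M/U^{\alpha+2}\mathcal M$. For example, take $\mathcal M=\mathbb C[t]$ on $\mathbb A^1$ with $X$ a point. Its $V$-filtration is $V^\alpha\mathcal M=t^{\max(\lceil\alpha\rceil-1,0)}\mathbb C[t]$, and for $\alpha=1$ you may take $b(s)=s+1$. Since $st=-2t$, you get $(s+1)^2t=t\notin V^3\mathcal M=t^2\mathbb C[t]$.

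The repair is to shift the polynomial at each step. Since $s+\delta+j$ is nilpotent on $\mathrm{Gr}_U^{\delta+j}\mathcal M$, you get $b(s+j)U^{\alpha+j}\mathcal M\subseteq U^{\alpha+j+1}\mathcal M$. Hence $c_k(s):=\prod_{j=0}^{k-1}b(s+j)$ maps $U^\alpha\mathcal M$ into $U^{\alpha+k}\mathcal M$. The roots of $c_k$ lie in $-[\alpha,\alpha+k)$, which still excludes $-\beta$ because $\beta<\alpha$. So $(s+\beta)^N$ and $c_k(s)$ are coprime, and the rest of Step 2 goes through unchanged.

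Separately, your Step 1 claim that the shift $U^\alpha\mathcal M\subseteq V^{\alpha-c}\mathcal M$ holds uniformly for all $\alpha$ is not supported by the listed axioms. They say nothing about $\partial_t$, so nothing controls very negative $\alpha$. You never need it, though. The only comparison you use is $U^{\alpha+k}\mathcal M\subseteq V^{>\beta}\mathcal M$ for $k\gg0$. This follows from $U^{\gamma+k}\mathcal M=t^kU^{\gamma}\mathcal M\subseteq t^kV^{\delta}\mathcal M\subseteq V^{\delta+k}\mathcal M$ for any $\gamma>0$ with $U^\gamma\mathcal M\subseteq V^\delta\mathcal M$. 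Uniform nilpotency exponents on finitely generated pieces are justified by $[s,V^0\mathcal D_Y]\subseteq V^1\mathcal D_Y$. Alternatively, you can run the coprimality argument element by element and avoid uniformity altogether.
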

	
	Now we will use the V-filtration to give another equivalent definition of the Bernstein-Sato polynomial. Let $f\in\Gamma(X,\mathcal O_X)$ be a nonzero regular function and let $i_f:X \hookrightarrow X \times \mathbb{A}^{1}_{\mathbb{C}}$
	be the graph embedding, with coordinate $t$ on the second factor. We consider the module $(i_f)_+\mathcal{O}_X$. One can define the $V$-filtration of the direct image$(i_f)_+\mathcal{O}_{X}$ since it is a regular holonomic $\mathcal{D}_{Y}$-module.

	Direct computation shows that locally on $X$, the $\mathcal D_Y$-module $(i_f)_+\mathcal O_X$ is identified with the formal module $\mathcal D_Yf^s$. 
	Consider the ambient module $\mathcal N_f=\mathcal O_X[f^{-1},s]f^s$,
	with actions $t(h(x,s)f^s)=f\,h(x,s+1)f^s$, $\partial_t(h(x,s)f^s)=-s f^{-1}h(x,s-1)f^s$.
	The graph module $(i_f)_+\mathcal O_X$ is identified with
	the cyclic submodule $\mathcal D_Yf^s\subseteq\mathcal N_f$.
	Expressions involving $f^{s-k}$ or $t^{-1}$ are understood
	in the ambient module. This module $(i_f)_+\mathcal O_X$ has the following characterization.
	
	\begin{proposition}\cite{Mal75}
	We have $$
	(i_f)_+\mathcal O_X
	\simeq
	\frac{\mathcal D_Y}
	{\mathcal D_Y\left(
		t-f,\ 
		\xi+\xi(f)\partial_t
		\ \middle|\
		\xi\in\Theta_X
		\right)}.
	$$
	In local coordinates $x_1,\ldots,x_n$ on $X$, this becomes
	$$
	(i_f)_+\mathcal O_X
	\simeq
	\frac{\mathcal D_Y}
	{\mathcal D_Y\left(
		t-f,\ 
		\partial_{x_i}+\partial_{x_i}(f)\partial_t
		\ \middle|\
		1\leq i\leq n
		\right)}.
	$$
	\end{proposition}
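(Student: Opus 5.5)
We prove the isomorphism by comparing the two sides through the cyclic generator $f^s=\delta(t-f)$. Define a surjection $\Phi:\mathcal D_Y\to\mathcal D_Yf^s$ by $P\mapsto P\cdot f^s$, where $\mathcal D_Yf^s\subseteq\mathcal N_f$ is the local model of $(i_f)_+\mathcal O_X$ recalled above. The steps are: (i) the left ideal $J=\mathcal D_Y(t-f,\ \partial_{x_i}+\partial_{x_i}(f)\partial_t)$ lies in $\ker\Phi$; (ii) $\ker\Phi\subseteq J$. The coordinate-free version follows from the local one, because the operators $\xi+\xi(f)\partial_t$ are $\mathcal O_X$-linear in $\xi$ and are generated by the $\partial_{x_i}$-ones.

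For (i) we use the explicit actions on $\mathcal N_f$. First, $t\cdot f^s=f\cdot f^s$ since $h=1$, so $(t-f)f^s=0$. Next, $\partial_{x_i}f^s=s\,\partial_{x_i}(f)f^{-1}f^s$, while
$$\partial_t f^s=-sf^{-1}f^s,$$
so $\partial_{x_i}(f)\partial_tf^s=-s\,\partial_{x_i}(f)f^{-1}f^s$. Adding these gives $0$.

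For (ii) we exhibit a normal form for $\mathcal D_Y/J$. Modulo $J$ we have $\partial_{x_i}\equiv -\partial_{x_i}(f)\partial_t$ (as right multiples) and $t\equiv f$, with the appropriate commutator bookkeeping: $[\partial_t,t-f]=1$ and $[\partial_{x_i},t-f]=-\partial_{x_i}f$. It follows that every class in $\mathcal D_Y/J$ can be written as $\sum_k a_k(x)\partial_t^k$ with $a_k\in\mathcal O_X$. To see this, move all $\partial_{x_i}$ to the right and replace them; then move $t$ to the right and replace it by $f$, which is legitimate because $t-f$ commutes with $\mathcal O_X$ and $J\ni P(t-f)$.

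It remains to show that $\Phi$ is injective on such sums. Suppose $\sum_k a_k\partial_t^k f^s=0$. Since
$$\partial_t^kf^s=(-1)^k s(s-1)\cdots(s-k+1)f^{-k}f^s,$$
we get $\sum_k(-1)^ka_k\,s(s-1)\cdots(s-k+1)f^{-k}=0$ in $\mathcal O_X[f^{-1},s]$. The falling factorials are linearly independent over $\mathcal O_X[f^{-1}]$, having distinct degrees in $s$, so every $a_k f^{-k}$ vanishes. Because $f$ is nonzero on the integral local ring, every $a_k=0$.

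The main obstacle is the spanning step in (ii): one must check that the reduction really terminates in the form $\sum a_k\partial_t^k$. We organize it as an induction on the order in $\partial_x$ followed by the degree in $t$, noting that $\mathcal D_Y$ is free over $\mathcal O_X[t]$ on the monomials $\partial_x^\beta\partial_t^k$. In the first step, write $P=\sum c_{\beta,k}(x,t)\partial_x^\beta\partial_t^k$ and reduce modulo the right ideal of multiples $Q(\partial_{x_i}+\partial_{x_i}(f)\partial_t)$; each substitution lowers the $|\beta|$-degree. In the second step, $c(x,t)=c(x,f)+Q(x,t)(t-f)$, and pushing $(t-f)$ past $\partial_t^k$ produces only lower-order $\partial_t$ terms via $[\partial_t,t-f]=1$, so induction on $k$ finishes. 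Once the normal form is in hand, $\ker\Phi=J$, and the Proposition follows.
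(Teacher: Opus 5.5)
Your proof is correct. It differs from the paper mainly because the paper gives no proof at all: it only cites Malgrange. The standard argument, which is also what the paper itself does in Section~\ref{sec3} to pass from $\mathcal M$ to $\mathcal M'$, is the change of coordinates $x'=x$, $t'=t-f$.

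\textbf{The coordinate-change route.} Since $\partial_t=\partial_{t'}$ and $\partial_{x_i}=\partial_{x'_i}-\partial_{x_i}(f)\partial_{t'}$, the generators of $J$ become $t'$ and $\partial_{x'_1},\ldots,\partial_{x'_n}$. The quotient $\mathcal D_Y/\mathcal D_Y(t',\partial_{x'})\cong\bigoplus_k\mathcal O_X\partial_{t'}^k$ is by definition the pushforward of $\mathcal O_X$ along the zero section. The automorphism $(x,t)\mapsto(x,t-f)$ identifies this with $(i_f)_+\mathcal O_X$. The normal form there is immediate from a PBW basis with $t'$ and $\partial_{x'}$ placed on the right (cf.\ Lemma~\ref{maximal_ideal}).

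\textbf{Your route.} You instead compute the annihilator of $f^s$ directly inside $\mathcal N_f$. Your two-stage reduction shows that $\mathcal D_Y/J$ is spanned by the classes of $\sum_k a_k(x)\partial_t^k$:
\begin{itemize}
\item first you eliminate $\partial_x$ by induction on its order;
\item then you eliminate $t$ using $[\partial_t,t-f]=1$, by descending induction on the $\partial_t$-degree.
\end{itemize}
Injectivity then follows from the linear independence of the falling factorials over $\mathcal O_X[f^{-1}]$, together with $f$ being a non-zero-divisor.

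\textbf{What each buys.} The coordinate-change proof is intrinsic. It works for any $f$, even $f=0$, and identifies $\mathcal D_Y/J$ with the pushforward itself. Your proof gives exactly the form used in Bernstein--Sato theory: the annihilator of $f^s$ in the ambient module is $J$. However, it relies on two things:
\begin{itemize}
\item the previously asserted identification $(i_f)_+\mathcal O_X\cong\mathcal D_Yf^s\subseteq\mathcal N_f$;
\item $f$ not vanishing identically on any component of $X$.
\end{itemize}

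\textbf{Two minor points.}
\begin{itemize}
\item The elements $Q(\partial_{x_i}+\partial_{x_i}(f)\partial_t)$ lie in the left ideal $J$; they do not form a right ideal, so that terminology should be corrected.
\item Your description of $\mathcal D_Y$ as free over $\mathcal O_X[t]$ on the monomials $\partial_x^\beta\partial_t^k$ refers to sections over $U\times\mathbb A^1$ with $U$ affine. This suffices, because $J$ and $\ker\Phi$ are quasi-coherent subsheaves of $\mathcal D_Y$ and such opens form an affine cover of $Y$.
\end{itemize}
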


	\begin{definition}
	For any $\delta \in (\iota_f)_+\mathcal{O}_X$, we define the Bernstein-Sato polynomial of $\delta$, denoted by $b_{\delta}(s)$, to be the minimal polynomial of the action of $s=-\partial_{t}t$ on $\frac{V^{0}\mathcal{D}_{Y} \cdot \delta}{V^{1}\mathcal{D}_{Y} \cdot \delta}$.
	\end{definition}
	
	One can verify the following proposition by calculating the quotient $\frac{V^{0}\mathcal{D}_{Y} \cdot \delta}{V^{1}\mathcal{D}_{Y} \cdot \delta}$ explicitly.
	\begin{proposition}
	With the notation above, if we take $\delta=1$, then $b_{\delta}(s)=b_{f}(s)$.
	\end{proposition}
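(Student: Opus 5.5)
We identify $V^{0}\mathcal D_Y\cdot 1$ and $V^{1}\mathcal D_Y\cdot 1$ inside the ambient module $\mathcal N_f=\mathcal O_X[f^{-1},s]f^s$, where $1$ corresponds to $f^s$. The plan is to show that
\[
\frac{V^0\mathcal D_Y\cdot f^s}{V^1\mathcal D_Y\cdot f^s}=\frac{\mathcal D_X[s]f^s}{\mathcal D_X[s]f^{s+1}},
\]
with $s=-\partial_t t$ acting on both sides in the same way. The proposition then follows from the earlier remark that $b_f(s)$ is the minimal polynomial of $s$ on $\mathcal D_X[s]f^s/\mathcal D_X[s]f^{s+1}$.

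First we describe $V^\bullet\mathcal D_Y$ explicitly. From the definition $V^m\mathcal D_Y=\{P: P\,t^q\mathcal O_Y\subseteq t^{q+m}\mathcal O_Y\}$, a monomial $a(x)\,t^j\partial_t^k\partial_x^\beta$ with $a\in\mathcal O_X$ lies in $V^m$ exactly when $j-k\ge m$. Hence $V^0\mathcal D_Y$ is generated over $\mathcal D_X$ by $t$ and $t\partial_t$. Since $\partial_t t=t\partial_t+1$, we may equally use $t$ and $s=-\partial_t t$. Moreover $V^1\mathcal D_Y=t\cdot V^0\mathcal D_Y$, because any monomial with $j-k\ge1$ can be written as $t$ times a monomial with $j-1-k\ge 0$.

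Next we compute the actions on $\mathcal N_f$. The formulas in the text give $t\cdot h(x,s)f^s=h(x,s+1)f^{s+1}$ and $-\partial_t t\cdot h(x,s)f^s=s\,h(x,s)f^s$. So the operator $s=-\partial_t t$ acts as multiplication by the variable $s$, and $t$ acts as the $\mathcal D_X$-linear map sending $Q(s)f^s$ to $Q(s+1)f^{s+1}$.

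The first claim is that $V^0\mathcal D_Y\cdot f^s=\mathcal D_X[s]f^s$. The inclusion $\supseteq$ is clear, since $\mathcal D_X$ and $s$ both lie in $V^0$. For $\subseteq$, it suffices to check that $\mathcal D_X[s]f^s$ is stable under $t$, which holds because $t\,P(s)f^s=P(s+1)f^{s+1}$ and $f^{s+1}=f\cdot f^s$. The second claim is that $V^1\mathcal D_Y\cdot f^s=t\,V^0\mathcal D_Y\cdot f^s=t\,\mathcal D_X[s]f^s=\mathcal D_X[s]f^{s+1}$. This uses that $s\mapsto s+1$ is an automorphism of $\mathcal D_X[s]$, so $\{P(s+1)\}$ runs over all of $\mathcal D_X[s]$. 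Since $s$ acts by multiplication on both quotients, the minimal polynomials agree.

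The main point needing care is the description $V^1=tV^0$ and the generation of $V^0$ by $\mathcal D_X,t,\partial_t t$. This requires normal-ordering monomials, which is routine once we note that $\partial_x$ commutes with $t$ and $\partial_t$. We must also check that the sign convention $s=-\partial_t t$ matches the formula $\partial_t(hf^s)=-sf^{-1}h(x,s-1)f^s$: indeed $\partial_t(t\,hf^s)=\partial_t(h(s+1)f^{s+1})=-s\,h(s)f^s$, so $-\partial_t t$ acts as $s$, as required.
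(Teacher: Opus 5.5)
Your proposal is correct. It is the explicit computation of $V^0\mathcal D_Y\cdot f^s/V^1\mathcal D_Y\cdot f^s$ that the paper only indicates, and it identifies this quotient with $\mathcal D_X[s]f^s/\mathcal D_X[s]f^{s+1}$, on which $-\partial_t t$ acts as $s$. The one point you assert without proof is that every $P\in V^m\mathcal D_Y$ is a sum of monomials $a(x)t^j\partial_t^k\partial_x^\beta$ with $j-k\ge m$. This is a routine grading check with $\deg t=1$ and $\deg\partial_t=-1$: the degree-$\ell$ component of $P$ sends $t^q\mathcal O_X$ into $t^{q+\ell}\mathcal O_X$, so a component with $\ell<m$ must annihilate all of $\mathcal O_X[t]$ and hence vanish.
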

	
	\subsection{Bernstein-Sato Polynomials of Ideal Sheaves}
	Budur, Mustaţă, and Saito introduced the Bernstein-Sato polynomial for arbitrary closed subschemes, or equivalently for coherent ideals, by using the $V$-filtration, see \cite[Sections 2.4 and 2.10]{Bernstein_Sato_polynomial_of_arbitrary_varieties}. This construction extends the classical hypersurface case and is independent of the chosen generators of the ideal by \cite[Theorem 2.5]{Bernstein_Sato_polynomial_of_arbitrary_varieties}.
	
	Before we give the definition of Bernstein-Sato polynomial for an ideal, let us fix some notations. Let $\mathfrak a\subseteq\mathcal O_X$ be a nonzero coherent ideal sheaf. Let $U\subseteq X$ be an open subset on which $\mathfrak a$ is generated by nonzero regular functions
	$\mathfrak a|_U=(f_1,\ldots,f_r)$. Let $s_1,\ldots,s_r$ be independent variables, and put $s=s_1+\cdots+s_r$.
	We consider the formal symbol $f_1^{s_1}\cdots f_r^{s_r}$
	inside $\mathcal O_U[f_1^{-1},\ldots,f_r^{-1},s_1,\ldots,s_r]
	f_1^{s_1}\cdots f_r^{s_r}$. For a positive integer $m$, we use the binomial notation $\binom{s_i}{m}=\frac{s_i(s_i-1)\cdots(s_i-m+1)}{m!}$.
	
	\begin{definition}[Bernstein-Sato polynomial of an ideal]\label{def_b_function_ideal_open_subset}
		With the notation above, the Bernstein-Sato polynomial of $\mathfrak a$ on $U$, denoted by $b_{\mathfrak a,U}(s)$, is the monic polynomial of smallest degree such that
		\begin{align}\label{def_b_function_ideal}
			b_{\mathfrak{a},U}(s)f_1^{s_1}\cdots f_r^{s_r}
			\in
			\sum_{\substack{u\in \mathbb{Z}^r\\ |u|=1}}
			\mathcal{D}_U[s_1,\ldots,s_r]\cdot
			\left(
			\prod_{u_i<0}\binom{s_i}{-u_i}
			\right)
			f_1^{s_1+u_1}\cdots f_r^{s_r+u_r},
		\end{align}
		where $|u|=u_1+\cdots+u_r$.
		This is the formulation used by Mustaţă in \cite[Introduction]{Mus22}; it is equivalent to the definition of Budur, Mustaţă, and Saito in \cite[Section 2.10]{Bernstein_Sato_polynomial_of_arbitrary_varieties}.
	\end{definition}
	
	In the case $r=1$, the only vector $u\in\mathbb Z$ with $|u|=1$ is $u=1$, and the definition reduces to $b_{\mathfrak a,U}(s)f^s
	\in
	\mathcal D_U[s]\cdot f^{s+1}$.
	Thus, if $\mathfrak a|_U=(f)$, then $b_{\mathfrak a,U}(s)=b_{f,U}(s)$, where $b_{f,U}(s)$ denotes the Bernstein-Sato polynomial of the restriction of $f$ to $U$. Hence $b_{\mathfrak a,U}(s)$ extends the Bernstein-Sato polynomial of a single regular function to coherent ideal sheaves.

	The binomials may be hard to understand at the first sight of the expression. We can explain them through the view of V-filtrations. Let
	$\mathbf f=(f_1,\ldots,f_r):U\longrightarrow\mathbb A^r$
	and let $i_{\mathbf f}:U\hookrightarrow Y:=U\times\mathbb A^r$
	be its graph embedding. Denote the coordinates on $\mathbb{A}^r$ by $t_1,\ldots,t_r$.
	The direct image $(i_{\mathbf{f}})_+\mathcal{O}_U$
	is a regular holonomic $\mathcal{D}_{Y}$-module. As in the case of a single polynomial, we have $(i_{\mathbf f})_+\mathcal{O}_U \cong \mathcal{D}_{Y}f_{1}^{s_{1}}...f_{r}^{s_{r}}$, where $\mathcal D_Y$ denotes the sheaf of differential operators on $Y$. 
	Consider the ambient module $\mathcal N_{\mathbf{f}}=\mathcal O_U[(f_{1}...f_{r})^{-1},s_{1},...,s_{r}]f_{1}^{s_{1}}...f_{r}^{s_{r}}$,
	with actions
	$$t_{i}(h(x,s_{1},...,s_{r})f_{1}^{s_{1}}...f_{r}^{s_{r}})=f_{i}\,h(x,s_{1},...,s_{i}+1,...,s_{r})f_{1}^{s_{1}}...f_{r}^{s_{r}}$$ and
	
	$$\partial_{t_{i}}(h(x,s_{1},...,s_{r})f_{1}^{s_{1}}...f_{r}^{s_{r}})=-s_{i}f_{i}^{-1}\,h(x,s_{1},...,s_{i}-1,...,s_{r})f_{1}^{s_{1}}...f_{r}^{s_{r}}$$.
	The graph module $(i_\mathbf{f})_+\mathcal O_U$ is identified with
	the cyclic submodule $\mathcal D_Yf^s\subseteq\mathcal N_f$.
	Expressions involving $t_{i}^{-1}$ are understood
	in the ambient module. Let $s_{ij}:=-\partial_{t_{i}}t_{j}$. One can verify that the definition for $b_{\mathfrak{a},U}(s)$ is equivalent to the minimal polynomial of the action of $s=\sum_{i=1}^{r}-\partial_{t_{i}}t_{i}$ on 
	$$\frac{\mathcal{D}_{U}[s_{i},s_{ij}]f_{1}^{s_{1}}...f_{r}^{s_{r}}}{\sum_{k=1}^{r}\mathcal{D}_{U}[s_{i},s_{ij}]f_{k}\cdot f_{1}^{s_{1}}...f_{r}^{s_{r}}}.$$
	
	For the module $\mathcal{D}_{x,t}f_{1}^{s_{1}}...f_{r}^{s_{r}}$, we also have the following characterization. The proof is the same as in the case of a single polynomial.
	\begin{proposition}
$$
(i_{\mathbf f})_+\mathcal O_U
\simeq
\frac{\mathcal D_Y}
{\mathcal D_Y\left(
	t_k-f_k,\ 
	\xi+\sum_{k=1}^r\xi(f_k)\partial_{t_k}
	\ \middle|\
	1\leq k\leq r,\ \xi\in\Theta_U
	\right)}.
$$
In local coordinates $x_1,\ldots,x_n$ on $U$, the second family of generators is
$$
\partial_{x_i}
+
\sum_{k=1}^r
\partial_{x_i}(f_k)\partial_{t_k},
\qquad
1\leq i\leq n.
$$
	\end{proposition}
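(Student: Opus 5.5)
The plan is to run the argument exactly as in the single-function case, i.e. the proof of the earlier Proposition giving $(i_f)_+\mathcal O_X\simeq\mathcal D_Y/\mathcal D_Y(t-f,\ \partial_{x_i}+\partial_{x_i}(f)\partial_t)$, now with $r$ graph coordinates. Let $J$ denote the left ideal generated by the operators $t_k-f_k$ for $1\le k\le r$ and $\partial_{x_i}+\sum_k\partial_{x_i}(f_k)\partial_{t_k}$ for $1\le i\le n$. Consider the surjection $\mathcal D_Y\to\mathcal D_Y f_1^{s_1}\cdots f_r^{s_r}$, $P\mapsto P\cdot f_1^{s_1}\cdots f_r^{s_r}$. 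Its image is, by the identification recalled before the statement, the graph module $(i_{\mathbf f})_+\mathcal O_U$, so it suffices to show that the kernel of this map is exactly $J$.

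First I will check that $J$ lies in the kernel, using the ambient actions on $\mathcal N_{\mathbf f}$. The action of $t_k$ multiplies by $f_k$ and shifts $s_k\mapsto s_k+1$; on the generator itself this is just multiplication by $f_k$, so $(t_k-f_k)f^{\mathbf s}=0$. For the second family, $\partial_{x_i}f^{\mathbf s}=\sum_k s_k\,\partial_{x_i}(f_k)f_k^{-1}f^{\mathbf s}$, while $\partial_{t_k}f^{\mathbf s}=-s_kf_k^{-1}f^{\mathbf s}$. Hence the sum $\partial_{x_i}+\sum_k\partial_{x_i}(f_k)\partial_{t_k}$ kills $f^{\mathbf s}$.

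For the reverse inclusion, the cleanest route is to make the change of coordinates $(x,t)\mapsto(x,\,t-\mathbf f(x))$ on $Y$. This automorphism carries $t_k-f_k$ to $t_k$ and the vector field $\partial_{x_i}+\sum_k\partial_{x_i}(f_k)\partial_{t_k}$ to $\partial_{x_i}$, since the chain rule gives the pushforward $\partial_{x_i}\mapsto\partial_{x_i}-\sum_k\partial_{x_i}(f_k)\partial_{t_k}$, and the inverse map gives the sign we need. After this change, $\mathcal D_Y/J$ becomes $\mathcal D_Y/\mathcal D_Y(t_1,\dots,t_r,\partial_{x_1},\dots,\partial_{x_n})$, which is the pushforward of $\mathcal O_U$ along the zero-section inclusion. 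Concretely, this module is $\mathcal O_U[\partial_{t_1},\dots,\partial_{t_r}]$, viewed as the free $\mathcal O_U$-module on the monomials $\partial_t^\beta$. It is therefore isomorphic to $(i_{\mathbf f})_+\mathcal O_U=\mathcal O_U\otimes\mathbb C[\partial_{t_1},\dots,\partial_{t_r}]$, the standard Kashiwara description of a pushforward along a closed embedding. Transporting back, $\mathcal D_Y/J\simeq(i_{\mathbf f})_+\mathcal O_U$, with the class of $1$ mapping to the canonical generator $1\otimes1$, which corresponds to $f^{\mathbf s}$.

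The main obstacle is matching this abstract isomorphism with the concrete map $P\mapsto Pf^{\mathbf s}$, that is, showing injectivity of $\mathcal D_Y/J\to\mathcal N_{\mathbf f}$ directly. I would do this by writing any class modulo $J$ uniquely as $\sum_\beta h_\beta(x)\partial_t^\beta$; this normal form is possible because $t_k$ can be replaced by $f_k$ and $\partial_{x_i}$ by $-\sum_k\partial_{x_i}(f_k)\partial_{t_k}$. Applying such an element to $f^{\mathbf s}$ gives
\[
\sum_\beta h_\beta\prod_k(-1)^{\beta_k}s_k(s_k-1)\cdots(s_k-\beta_k+1)f_k^{-\beta_k}f^{\mathbf s}.
\]
The leading terms in $s$ of these products are $\pm\prod_k s_k^{\beta_k}f_k^{-\beta_k}$, and these are linearly independent over $\mathcal O_U[(f_1\cdots f_r)^{-1}]$. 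So if the image vanishes, a top-degree argument in $s$ forces every $h_\beta$ to be $0$, which gives injectivity and completes the proof.
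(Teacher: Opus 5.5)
Your proof is correct, and it supplies an argument the paper does not give. The paper's only justification is that ``the proof is the same as in the case of a single polynomial,'' and the single-function case is itself only cited to Malgrange \cite{Mal75}. Your route also matches machinery the paper uses later. The change of variables $t_k'=t_k-f_k$, $\partial_{x_i}=\partial_{x_i'}-\sum_k\partial_{x_i}(f_k)\partial_{t_k'}$ is exactly the one used in Section \ref{sec3} to rewrite $\mathcal M$. The freeness of $\mathcal D_Y/\mathcal D_Y(t_1,\dots,t_r,\partial_{x_1},\dots,\partial_{x_n})$ on the monomials $\partial_t^\beta$ is the content of Lemma \ref{maximal_ideal}. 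What your self-contained version buys over the citation is that it proves both identifications the paper only asserts:
\begin{itemize}
\item your second paragraph gives $\mathcal D_Y/J\simeq(i_{\mathbf f})_+\mathcal O_U$ for the $\mathcal D$-module direct image, which is the statement as literally written;
\item the falling-factorial computation shows that the annihilator in $\mathcal D_Y$ of $f_1^{s_1}\cdots f_r^{s_r}\in\mathcal N_{\mathbf f}$ is exactly $J$. This justifies working with the concrete model $\mathcal D_Yf_1^{s_1}\cdots f_r^{s_r}$ in Sections \ref{sec2} and \ref{sec3}.
\end{itemize}

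A few points of hygiene:
\begin{itemize}
\item Before the injectivity argument you only need the existence of a representative $\sum_\beta h_\beta(x)\partial_t^\beta$. Uniqueness should not be used as an input; it follows afterwards, or from the freeness noted above.
\item The top-degree step is cleanest stated as follows. The falling factorials $\prod_k s_k(s_k-1)\cdots(s_k-\beta_k+1)$ are unitriangular with respect to the monomials $s^\beta$, so they form a basis of $R[s_1,\dots,s_r]$ over any coefficient ring $R$.
\item To pass from $h_\beta\prod_kf_k^{-\beta_k}=0$ to $h_\beta=0$, you need $f_1\cdots f_r$ to be a non-zero-divisor on $\mathcal O_U$. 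This is the implicit standing assumption that no $f_k$ vanishes identically on a component of $U$.
\item The remark that ``the inverse map gives the sign we need'' is unnecessary. The chain rule for $\phi(x,t)=(x,t-\mathbf f(x))$ directly gives $\phi_*\bigl(\partial_{x_i}+\sum_k\partial_{x_i}(f_k)\partial_{t_k}\bigr)=\partial_{x_i}$.
\item The coordinate-free generators $\xi+\sum_k\xi(f_k)\partial_{t_k}$, $\xi\in\Theta_U$, generate the same left ideal as the coordinate ones, because the expression is $\mathcal O_U$-linear in $\xi$.
\end{itemize}
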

	
	Since $(i_\mathbf{f})_+\mathcal{O}_U$
	is a regular holonomic $\mathcal{D}_{Y}$-module, one can define the V-filtration along the smooth subvariety $U\times\{0\}\subset Y$. More explicitly, let $\mathcal J=(t_1,\ldots,t_r)\subseteq\mathcal O_Y$, and we define the filtration $V^{\bullet}\mathcal{D}_{Y}$ as
	\begin{flalign*}
	V^m\mathcal D_Y
	=
	\left\{
	P\in\mathcal D_Y
	\ \middle|\
	P\mathcal J^q\subseteq\mathcal J^{q+m}
	\text{ for every }q\in\mathbb Z
	\right\},
	\end{flalign*}
	where $\mathcal J^q=\mathcal O_Y$ for $q<0$.

	\begin{definition}
		Let $M$ be a coherent $\mathcal D_Y$-module. A V-filtration along $U \times \{0\} \subset Y$ on $\mathcal{M}$ is a decreasing filtration $V^{\bullet}\mathcal{M}$ indexed by rational numbers satisfying the following conditions.
		
		$(1)$ It is exhaustive, i.e. $\bigcup_{\alpha \in \mathbb{Q}}V^{\alpha}\mathcal{M}=\mathcal{M}$,
		
		$(2)$ It is discrete and left continuous. This means that there exists a positive integer $N$ large enough such that for every $i \in \mathbb{Z}$ and $\alpha \in (\frac{i}{N},\frac{i+1}{N})$, $V^{\alpha}\mathcal{M}$ is constant and $V^\alpha M=\bigcap_{\beta<\alpha}V^\beta M$ for every $\alpha\in\mathbb Q$.
		
		$(3)$ For every $\alpha \in \mathbb{Q}$ and $i \in \mathbb{Z}$, we have $V^{i}\mathcal{D}_{x,t} \cdot V^{\alpha}\mathcal{M} \subset V^{i+\alpha}\mathcal{M}$.
		
		$(4)$ For every $\alpha \in \mathbb{Q}$, the operator $s+\alpha:=-\sum_{i=1}^{r}\partial_{t_{i}}t_{i}+\alpha$ is nilpotent on $\mathrm{Gr}_{V}^{\alpha}\mathcal{M}:=V^{\alpha}\mathcal{M}/\bigcup_{\beta>\alpha}V^{\beta}\mathcal{M}$.
		
		$(5)$ For every $\alpha \in \mathbb{Q}$, $V^{\alpha}\mathcal{M}$ is finitely generated over $V^{0}\mathcal{D}_{Y}$.
		
		$(6)$ For every $i>0$ and every sufficiently large $\alpha$,
		$V^i\mathcal D_Y\cdot V^\alpha M
		=
		V^{\alpha+i}M$.
	\end{definition}
	
	The following is the definition of Bernstein-Sato polynomial for ideals, and one can verify that this is equivalent to the equation \ref{def_b_function_ideal}. This equivalence is also explained in \cite[Sections 2.4 and 2.10]{Bernstein_Sato_polynomial_of_arbitrary_varieties}.
	\begin{definition}
		For any $\delta \in (i_f)_+\mathcal{O}_U$, we define the Bernstein-Sato polynomial of $\delta$, denoted by $b_{\delta}(s)$, to be the minimal polynomial of the action of $s=\sum_{k=1}^{r}-\partial_{t_{k}}t_{k}$ on $\frac{V^{0}\mathcal{D}_{Y} \cdot \delta}{V^{1}\mathcal{D}_{Y} \cdot \delta}$.
	\end{definition}
	
	\begin{proposition}
		With the notation above, if we take $\delta=1$, then $b_{\delta}(s)=b_{\mathfrak{a},U}(s)$.
	\end{proposition}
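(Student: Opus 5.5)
The plan is to compute the two modules $V^0\mathcal D_Y\cdot\delta$ and $V^1\mathcal D_Y\cdot\delta$ inside the ambient module $\mathcal N_{\mathbf f}$. This shows that the quotient defining $b_\delta(s)$ is the quotient displayed after Definition~\ref{def_b_function_ideal_open_subset}. We then identify the minimal polynomial of $s$ on that quotient with the polynomial in \eqref{def_b_function_ideal}. Throughout, $\delta=1$ is identified with the generator $f_1^{s_1}\cdots f_r^{s_r}$ of $\mathcal D_Yf_1^{s_1}\cdots f_r^{s_r}\cong(i_{\mathbf f})_+\mathcal O_U$, as in the characterization of the graph module proved earlier.

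\emph{Step 1: generators of the $V$-filtration on $\mathcal D_Y$.} From the definition of $V^m\mathcal D_Y$ via $\mathcal J=(t_1,\dots,t_r)$, one checks in local coordinates that $V^0\mathcal D_Y$ is generated as a ring by $\mathcal O_Y$, the $\partial_{x_i}$, and the operators $t_j\partial_{t_i}$. For a monomial $x^a t^b\partial_x^c\partial_t^e$ the condition $P\mathcal J^q\subseteq\mathcal J^{q}$ forces $|b|\ge|e|$, which gives this description. In the same way, $V^1\mathcal D_Y=\sum_k t_kV^0\mathcal D_Y=\sum_k V^0\mathcal D_Y t_k$. The two sums agree because $[t_j\partial_{t_i},t_k]$ lies in $\mathcal O_Y t_j$.

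\emph{Step 2: action on $\delta$.} We use the formulas for $t_i$ and $\partial_{t_i}$ on $\mathcal N_{\mathbf f}$. First, $t_k\delta=f_k\,f_1^{s_1}\cdots f_r^{s_r}$ in the shifted sense, and $-\partial_{t_i}t_j$ acts as $s_{ij}$; in particular $-\partial_{t_i}t_i=s_i$. Next, $\partial_{x_i}$ acts through the relation $\partial_{x_i}+\sum_k\partial_{x_i}(f_k)\partial_{t_k}$, so it is the usual derivation of $f^s$. Finally, $\mathcal O_Y$ acts through $\mathcal O_U$ together with the $t_k$. Hence
\[
V^0\mathcal D_Y\cdot\delta=\mathcal D_U[s_i,s_{ij}]\,f_1^{s_1}\cdots f_r^{s_r},\qquad V^1\mathcal D_Y\cdot\delta=\sum_k\mathcal D_U[s_i,s_{ij}]\,f_k f_1^{s_1}\cdots f_r^{s_r}.
\]
Here we rewrite $t_j\partial_{t_i}=-s_{ij}$ up to a term $t_j\partial_{t_i}-\partial_{t_i}t_j$, which is $0$ or $-1$, and use Step~1 to move the $t_k$ to the right. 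Consequently $b_\delta(s)$ is the minimal polynomial of $s$ on the displayed quotient.

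\emph{Step 3: comparison with \eqref{def_b_function_ideal}.} This is the main obstacle. We compute $s_{ij}f^{\mathbf s}=s_i f_j f_i^{-1}f^{\mathbf s}$, which equals $s_i f^{\mathbf s+e_j-e_i}$ for $i\ne j$. Thus monomials in the $s_{ij}$ applied to $f_kf^{\mathbf s}$ produce exactly terms of the form $\prod_{u_i<0}\binom{s_i}{-u_i}f^{\mathbf s+u}$ with $|u|=1$, up to polynomial factors in the $s_i$. For this we order the factors and use $s_i(s_i-1)\cdots$ as one lowers $s_i$ repeatedly. Conversely, each such binomial term is obtained from an appropriate ordered product of the $s_{ij}$ applied to $f_kf^{\mathbf s}$, with $k$ chosen among the indices where $u_k>0$. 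This shows that the submodule in \eqref{def_b_function_ideal} coincides with $V^1\mathcal D_Y\cdot\delta$. It also shows that $b(s)f^{\mathbf s}$ lies in it if and only if $b(s)$ kills the quotient. The latter uses that the quotient is cyclic generated by $f^{\mathbf s}$ and that $s$ commutes with $\mathcal D_U$ and with the $s_{ij}$ modulo $V^1$. The commutation $[s,s_{ij}]=0$ is checked directly, since $s$ is the $\mathfrak{gl}_r$-invariant trace element. I expect the combinatorial matching of binomial coefficients with ordered products of the $s_{ij}$ to be the delicate point. If it becomes messy, I would instead cite \cite[Sections 2.4 and 2.10]{Bernstein_Sato_polynomial_of_arbitrary_varieties} for this identification.
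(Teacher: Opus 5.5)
Your proposal is correct and takes essentially the paper's route. The paper gives no argument beyond asserting, just before the statement, that $b_{\mathfrak a,U}$ is the minimal polynomial of $s$ on $\mathcal D_U[s_i,s_{ij}]f^{\mathbf s}/\sum_k\mathcal D_U[s_i,s_{ij}]f_kf^{\mathbf s}$, and it defers the comparison with \eqref{def_b_function_ideal} to \cite[Sections 2.4 and 2.10]{Bernstein_Sato_polynomial_of_arbitrary_varieties}; your Steps 1--3 carry out exactly that verification. The one spot to tighten is the inclusion of $V^1\mathcal D_Y\cdot\delta$ into the module in \eqref{def_b_function_ideal}: the $s_{ij}$ do not commute, so instead of ``ordering the factors'' you should induct on word length using $s_{ij}\bigl(c(s)f^{\mathbf s+u}\bigr)=s_i\,c(s+e_j-e_i)\,f^{\mathbf s+u+e_j-e_i}$, which visibly preserves divisibility of the coefficient by $\prod_{u_l<0}s_l(s_l-1)\cdots(s_l+u_l+1)$.
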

	
	The existence and the independence of generators are proved by Budur, Mustaţă, and Saito in \cite[Theorem 2.5]{Bernstein_Sato_polynomial_of_arbitrary_varieties}. 
	\begin{theorem}[Existence and independence of generators]
	Let $\mathfrak a\subseteq\mathcal O_X$ be a nonzero coherent ideal sheaf. On an open subset $U\subseteq X$, suppose that
	$\mathfrak a|_U=(f_1,\ldots,f_r)=(g_1,\ldots,g_m)$.
	Then $b_{\mathfrak a,U}(s)$ exists, and the two systems of generators define the same polynomial.
	\end{theorem}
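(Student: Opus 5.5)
The plan is to take the $V$-filtration description of $b_{\mathfrak a,U}(s)$ as the definition (the Proposition just above identifies it with $b_\delta(s)$ for $\delta=1$). Then prove two things: existence, from the Existence and Uniqueness theorems for the Kashiwara--Malgrange filtration; and independence of generators, by reducing to the case where a single redundant generator is added and applying a change of coordinates on $Y=U\times\mathbb A^r$ that preserves $U\times\{0\}$.

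\emph{Existence.} Let $M=(i_{\mathbf f})_+\mathcal O_U$. It is regular holonomic with quasi-unipotent monodromy, so it has a unique $V$-filtration along $U\times\{0\}$.
\begin{enumerate}
\item Since $V^0\mathcal D_Y\cdot 1$ is finitely generated and the filtration is exhaustive, there is an $\alpha_0$ with $V^0\mathcal D_Y\cdot 1\subseteq V^{\alpha_0}M$.
\item Conversely, I want some $\beta$ with $V^{\beta}M\cap V^0\mathcal D_Y\cdot 1\subseteq V^1\mathcal D_Y\cdot 1$. I would get this from property $(6)$ together with an Artin--Rees type argument: the filtration $V^k\mathcal D_Y\cdot 1$ and the filtration induced by $V^\bullet M$ on $V^0\mathcal D_Y\cdot 1$ are both good filtrations over $V^0\mathcal D_Y$, hence comparable up to a finite shift.
\item By $(1)$, $(2)$ and $(5)$, the quotient $V^0\mathcal D_Y\cdot1/V^1\mathcal D_Y\cdot 1$ then carries a finite filtration whose graded pieces are subquotients of $\mathrm{Gr}^\alpha_V M$ for finitely many rational $\alpha\in[\alpha_0,\beta)$.
\item On each $\mathrm{Gr}^\alpha_V M$, $s+\alpha$ is nilpotent by $(4)$. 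Hence a product $\prod_\alpha(s+\alpha)^{m_\alpha}$ annihilates the quotient, and a minimal polynomial exists.
\end{enumerate}
The equivalence with the algebraic formulation \eqref{def_b_function_ideal} is taken as given from the preceding discussion.

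\emph{Independence.} Any two generating systems are connected by a chain of moves: add a generator $g=\sum_k a_kf_k$ with $a_k\in\mathcal O_U$, or remove one. It therefore suffices to compare $(f_1,\ldots,f_r)$ with $(f_1,\ldots,f_r,g)$.
\begin{enumerate}
\item On $Y'=U\times\mathbb A^{r+1}$, use the automorphism $\phi(x,t)=(x,t_1,\ldots,t_r,t_{r+1}-\sum_k a_k(x)t_k)$. It maps the graph of $(\mathbf f,g)$ onto the graph of $(\mathbf f,0)$, preserves the ideal $\mathcal J'=(t_1,\ldots,t_{r+1})$, and sends the class of $1$ to the class of $1$.
\item Since $\phi$ preserves $\mathcal J'$, it preserves $V^\bullet\mathcal D_{Y'}$. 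By the Uniqueness theorem it therefore transports the $V$-filtration of one module to that of the other.
\item Because $\phi$ is linear in $t$ with $x$-dependent coefficients, the Euler field $\sum t_i\partial_{t_i}$ is preserved exactly. The vector fields $\partial_{x_j}$ change only by terms $(\partial_{x_j}a_k)\,t_k\partial_{t_{r+1}}\in V^0\mathcal D_{Y'}$. Hence $s$ and the quotient $V^0\mathcal D\cdot1/V^1\mathcal D\cdot1$ correspond, and $b$ is unchanged.
\end{enumerate}

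\emph{Main obstacle.} It remains to compare $(i_{(\mathbf f,0)})_+\mathcal O_U$ with $(i_{\mathbf f})_+\mathcal O_U$. The former is the external product $(i_{\mathbf f})_+\mathcal O_U\boxtimes\mathbb C[\partial_{t_{r+1}}]\delta(t_{r+1})$.
\begin{enumerate}
\item I would show that its $V$-filtration is the convolution of the $V$-filtration on the first factor with the filtration on the $\delta$-module. On the latter, $-\partial_{t_{r+1}}t_{r+1}$ acts on $\partial_{t_{r+1}}^j\delta$ by the scalar $j$, so $\partial^j_{t_{r+1}}\delta$ lies in degree $-j$.
\item Uniqueness reduces this to checking the axioms for the candidate filtration.
\item Then compute directly that $V^0\mathcal D_{Y'}\cdot(1\otimes\delta)/V^1\mathcal D_{Y'}\cdot(1\otimes\delta)$ is isomorphic, compatibly with $s$, to the corresponding quotient for $\mathbf f$. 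The key point is that the extra operators $t_{r+1}$ and $t_{r+1}\partial_{t_{r+1}}$ kill $\delta$, while $\partial_{t_{r+1}}$ lowers $V$-degree and cannot occur in $V^0$ without a compensating $t_i$.
\end{enumerate}
This bookkeeping, making sure that no new roots (such as integer shifts) appear, is where I expect the real work to be.
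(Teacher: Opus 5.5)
The paper does not prove this statement; it cites Budur, Musta\c{t}\u{a} and Saito, Theorem 2.5. Your existence argument is fine. Your reduction for independence is also fine and is the standard route: add one generator at a time (locally, so that $g=\sum_k a_kf_k$ with regular $a_k$), then use the unipotent change of fibre coordinates to replace $g$ by $0$.

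The genuine gap is in the final comparison, which carries the whole content of the independence statement. The isomorphism you propose to verify is false. Put $\tau=t_{r+1}$ and $\delta'=1\otimes\delta(\tau)$. Expand operators in monomials $t^\mu\tau^p\partial_t^\nu\partial_\tau^q$ and use that $\tau^p\partial_\tau^q\delta(\tau)\in\mathbb C\,\partial_\tau^{q-p}\delta(\tau)$ (zero if $p>q$). This gives
\[
V^k\mathcal D_{Y'}\cdot\delta'=\bigoplus_{j\ge0}\bigl(V^{k+j}\mathcal D_Y\cdot1\bigr)\otimes\partial_\tau^j\delta(\tau),
\qquad
\frac{V^0\mathcal D_{Y'}\cdot\delta'}{V^1\mathcal D_{Y'}\cdot\delta'}\cong\bigoplus_{j\ge0}\frac{V^{j}\mathcal D_Y\cdot1}{V^{j+1}\mathcal D_Y\cdot1}\otimes\partial_\tau^j\delta(\tau),
\]
and $s'=s-\partial_\tau\tau$ acts as $s+j$ on the $j$-th summand. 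The ``compensated'' operators $t_i\partial_\tau\in V^0\mathcal D_{Y'}$ produce exactly the summands with $j\ge1$. These are nonzero in general: already for $f=x$ on $\mathbb A^1$, the class of $t\cdot1$ in $V^1\mathcal D_Y\cdot1/V^2\mathcal D_Y\cdot1$ is nonzero. So the two quotients are not isomorphic. Only their minimal polynomials agree, and that is precisely the ``no integer-shifted new roots'' point you left open.

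The missing argument is short:
\begin{itemize}
\item From $s\,t_k=t_k(s-1)$ one gets $b(s+j)\,t^\mu=t^\mu\,b(s)$ whenever $|\mu|=j$.
\item One has $V^j\mathcal D_Y=\sum_{|\mu|=j}t^\mu V^0\mathcal D_Y$.
\item By definition, $b_{\mathbf f}(s)$ kills the whole quotient $V^0\mathcal D_Y\cdot1/V^1\mathcal D_Y\cdot1$.
\end{itemize}
Hence, for every $P\in V^0\mathcal D_Y$,
\[
b_{\mathbf f}(s+j)\,t^\mu P\cdot1=t^\mu\, b_{\mathbf f}(s)P\cdot1\in t^\mu V^1\mathcal D_Y\cdot1\subseteq V^{j+1}\mathcal D_Y\cdot1 .
\]
So $b_{\mathbf f}(s')$ annihilates every summand, while the $j=0$ summand gives $b_{\mathbf f}\mid b_{(\mathbf f,0)}$; together these give equality.

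The convolution description of the $V$-filtration on the external product is not needed, and by itself would not suffice. The polynomial $b_{\delta'}$ depends only on the submodules $V^\bullet\mathcal D_{Y'}\cdot\delta'$. Knowing $\mathrm{Gr}_V$ of the module only constrains which roots can occur; it does not determine the minimal polynomial. For the same reason, transporting $V^\bullet M$ by the uniqueness theorem in your coordinate-change step is unnecessary.
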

	
	As in the case of a single polynomial, there is also a version of the local Bernstein-Sato polynomial for ideals. 	
	\begin{definition}
		Let $\mathfrak a\subseteq\mathcal O_X$ be a nonzero coherent ideal sheaf and let $x\in X$. Choose generators $\mathfrak a_x=(f_1,\ldots,f_r)$ in $\mathcal O_{X,x}$, and put $s=s_1+\cdots+s_r$.
		The local Bernstein-Sato polynomial of $\mathfrak a$ at $x$, denoted by $b_{\mathfrak a,x}(s)$, is the unique monic polynomial of smallest degree such that $b_{\mathfrak a,x}(s_1+\cdots+s_r)
		f_1^{s_1}\cdots f_r^{s_r}$
		belongs to
		$
		\sum_{\substack{u\in\mathbb Z^r\\ |u|=1}}
		\mathcal D_{X,x}[s_1,\ldots,s_r]
		\left(
		\prod_{u_i<0}\binom{s_i}{-u_i}
		\right)
		f_1^{s_1+u_1}\cdots f_r^{s_r+u_r}$.
		By \cite[Theorem 2.5]{Bernstein_Sato_polynomial_of_arbitrary_varieties}, this polynomial is independent of the chosen generators of $\mathfrak a_x$.
	\end{definition}
	
	\begin{definition}
		For every open subset $U\subseteq X$, define
		$$
		b_{\mathfrak a,U}(s)
		:=
		\operatorname{lcm}_{x\in U}b_{\mathfrak a,x}(s).
		$$
		If $\mathfrak a|_U$ is generated by regular functions on $U$, this definition agrees with Definition \ref{def_b_function_ideal_open_subset} by \cite[Section 2.10, formula (2.10.1)]{Bernstein_Sato_polynomial_of_arbitrary_varieties}. The global Bernstein-Sato polynomial of $\mathfrak a$ is defined by
		$$
		b_{\mathfrak a}(s)
		:=
		b_{\mathfrak a,X}(s)
		=
		\operatorname{lcm}_{x\in X}b_{\mathfrak a,x}(s).
		$$
	\end{definition}
	
	\begin{theorem}[Local-global relation for ideals]\label{thm:local-global-ideal}
		Let $\mathfrak a\subseteq\mathcal O_X$ be a nonzero coherent ideal sheaf. If $V\subseteq U\subseteq X$ are open subsets, then $b_{\mathfrak a|_V,V}(s)
		\mid
		b_{\mathfrak a|_U,U}(s)$.
		Moreover, for every $x\in U$,
		$b_{\mathfrak a,x}(s)
		\mid
		b_{\mathfrak a|_U,U}(s)$.
		In particular, $b_{\mathfrak a,x}(s)\mid b_{\mathfrak a}(s)$
		for every $x\in X$.
	\end{theorem}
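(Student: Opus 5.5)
The proof reduces everything to stalks, using the description of $b_{\mathfrak a,U}$ as $\operatorname{lcm}_{x\in U}b_{\mathfrak a,x}(s)$ given in the definition preceding the statement. That definition requires the local polynomials $b_{\mathfrak a,x}(s)$ to form a set with a well-defined least common multiple, i.e.\ only finitely many distinct ones. I justify this first. Cover $X$ by finitely many affine opens $U_j$ on which $\mathfrak a$ is generated by regular functions $f_1,\ldots,f_r$. On each $U_j$, Definition \ref{def_b_function_ideal_open_subset} yields a polynomial $b_{\mathfrak a,U_j}(s)$ and a relation of the form \eqref{def_b_function_ideal} holding on all of $U_j$. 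Localizing that relation at $x\in U_j$ shows $b_{\mathfrak a,x}(s)\mid b_{\mathfrak a,U_j}(s)$. Every local polynomial is therefore a monic divisor of one of finitely many polynomials, so the lcm is well defined. (This is also the content of the agreement statement cited from \cite[(2.10.1)]{Bernstein_Sato_polynomial_of_arbitrary_varieties}, which we may assume.)

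With this in place, both divisibilities are formal properties of lcm's.

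\emph{First claim.} Let $V\subseteq U$ be open. By definition,
$$b_{\mathfrak a|_V,V}(s)=\operatorname{lcm}_{x\in V}b_{\mathfrak a,x}(s).$$
The local polynomial $b_{\mathfrak a,x}$ depends only on the stalk $\mathfrak a_x\subseteq\mathcal O_{X,x}$. This stalk is the same whether $x$ is regarded as a point of $V$, of $U$, or of $X$, and by \cite[Theorem 2.5]{Bernstein_Sato_polynomial_of_arbitrary_varieties} the polynomial is independent of the chosen local generators. Hence the family $\{b_{\mathfrak a,x}\}_{x\in V}$ is a subfamily of $\{b_{\mathfrak a,x}\}_{x\in U}$. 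The lcm of a subfamily divides the lcm of the whole family, which gives $b_{\mathfrak a|_V,V}\mid b_{\mathfrak a|_U,U}$.

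\emph{Second claim.} For $x\in U$, the polynomial $b_{\mathfrak a,x}(s)$ is one of the terms in the lcm defining $b_{\mathfrak a|_U,U}(s)$, so it divides that lcm.

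\emph{Third claim.} Take $U=X$ in the second claim; this gives $b_{\mathfrak a,x}\mid b_{\mathfrak a}$ for every $x\in X$.

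The only point that is not formal is the finiteness and well-definedness handled in the first paragraph, together with the fact that the stalk-level polynomial does not depend on which open set contains $x$. I would take both from the cited results of Budur--Musta\c{t}\u{a}--Saito. Everything else is elementary divisibility of lcm's.
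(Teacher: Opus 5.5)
Your argument is correct and matches the paper's approach. The paper gives no separate proof: with $b_{\mathfrak a,U}(s):=\operatorname{lcm}_{x\in U}b_{\mathfrak a,x}(s)$, all three divisibilities are formal facts about least common multiples of subfamilies, since the stalks of $\mathfrak a|_V$ and $\mathcal D_V$ agree with those on $X$. Your finiteness check, via a finite affine cover and localizing the functional equation, spells out why this lcm is well defined, which the paper leaves to its citation of formula (2.10.1) of Budur--Musta\c{t}\u{a}--Saito.
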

	
	Let $Z\subseteq X$ be the closed subscheme defined by a nonzero coherent ideal sheaf
	$\mathfrak a\subseteq\mathcal O_X$,
	and set $c=\operatorname{codim}_X Z$.
	Following Budur, Mustaţă, and Saito, define $b_Z(s):=b_{\mathfrak a}(s-c)$. This $b_{Z}$ will only depend on $Z$, and will not depend on the embedded space $X$. Combining with the theorem above, we have the following proposition.
	\begin{proposition}\label{open_subset_b_function}
	Under the assumption above, let $U \subset X$ be a non-empty open subset, Assume that $U\cap Z\ne\varnothing$ and that $\operatorname{codim}_U(U\cap Z)=\operatorname{codim}_X Z$.
	Then $b_{U\cap Z}(s)$ divides $b_Z(s)$.
	In particular, this holds if $Z$ is equidimensional.
	\end{proposition}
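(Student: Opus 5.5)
The proposition is an immediate translation of Theorem \ref{thm:local-global-ideal} through the shift $b_Z(s)=b_{\mathfrak a}(s-c)$. Write $c=\operatorname{codim}_X Z$ and $c_U=\operatorname{codim}_U(U\cap Z)$; by hypothesis $c_U=c$. The closed subscheme $U\cap Z\subseteq U$ is defined by the restricted ideal $\mathfrak a|_U$, which is nonzero on $U$ since $U$ is a nonempty open subset of the irreducible components it meets. By definition
$$b_{U\cap Z}(s)=b_{\mathfrak a|_U,U}(s-c_U)=b_{\mathfrak a|_U,U}(s-c),\qquad b_Z(s)=b_{\mathfrak a,X}(s-c).$$

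Applying Theorem \ref{thm:local-global-ideal} with $V=U$ and with $X$ in the role of $U$ gives $b_{\mathfrak a|_U,U}(s)\mid b_{\mathfrak a,X}(s)$. Equivalently, since $b_{\mathfrak a,U}=\operatorname{lcm}_{x\in U}b_{\mathfrak a,x}$ is an lcm over a subset of the points used for $b_{\mathfrak a,X}$, one may argue directly from the definition. Divisibility in $\mathbb C[s]$ is preserved under the substitution $s\mapsto s-c$, since it is a ring automorphism. Hence $b_{U\cap Z}(s)\mid b_Z(s)$.

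The only point needing care is the codimension matching. If $c_U\neq c$, the two shifts would differ and the conclusion would fail in general, which is why it is imposed as a hypothesis.

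For the final clause, suppose $Z$ is equidimensional. Take an irreducible component $Z_i$ of $Z$ with $Z_i\cap U\neq\varnothing$. Then $Z_i\cap U$ is dense in $Z_i$, so $\dim(Z_i\cap U)=\dim Z_i$. Moreover $X$ is smooth, and we work on the connected component of $X$ containing the relevant points, so $\dim U=\dim X$ there. Thus every component of $U\cap Z$ has codimension $\dim X-\dim Z_i$, and this is the same number $c$ for all $i$ by equidimensionality. Hence $\operatorname{codim}_U(U\cap Z)=\operatorname{codim}_X Z$, and the first part applies. I expect no real obstacle beyond this bookkeeping of dimensions, including the convention that codimension is taken as a minimum over components.
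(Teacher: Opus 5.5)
Your proposal is correct and is essentially the paper's argument: the paper obtains the proposition by combining Theorem \ref{thm:local-global-ideal} (applied to $U\subseteq X$) with the shift $b_Z(s)=b_{\mathfrak a}(s-c)$ and the assumed equality of codimensions, which is what you do. Your dimension count for the equidimensional clause, which the paper leaves implicit, is fine under the standing convention that $X$ is connected (hence irreducible, being smooth); that convention is what your remark about working on a connected component amounts to.
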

	Another important relation between Bernstein-Sato polynomials of varieties is that adjoining smooth affine coordinates does not change the Bernstein-Sato polynomial.
	\begin{proposition}\label{multiplying_smooth_variety}
	Under the assumption above, let $Y$ be a smooth affine variety, then $b_{Z}=b_{Z \times Y}$.
	\end{proposition}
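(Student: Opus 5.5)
The plan is to reduce to the case $X=\mathbb A^N$, i.e.\ $Z\subseteq X$ is the zero locus of an ideal $\mathfrak a=(f_1,\ldots,f_r)$. The product $Z\times Y\subseteq X\times Y$ is then defined by the pulled-back ideal $\mathfrak a\mathcal O_{X\times Y}=(f_1,\ldots,f_r)$, where the $f_k$ are regarded as functions not depending on the coordinates of $Y$. Codimension is unchanged: $\operatorname{codim}_{X\times Y}(Z\times Y)=\operatorname{codim}_X Z=c$. So by the definition $b_Z(s)=b_{\mathfrak a}(s-c)$, it suffices to prove $b_{\mathfrak a\mathcal O_{X\times Y}}(s)=b_{\mathfrak a}(s)$.

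Since $Y$ is smooth affine, it is locally étale over $\mathbb A^m$. Bernstein--Sato polynomials of ideals are local (Theorem \ref{thm:local-global-ideal}) and compatible with étale coordinate charts, because $\mathcal D$ is generated by the coordinate vector fields. So I may assume the situation is $X\times\mathbb A^m$ with coordinates $x,y$, and the generators $f_k$ depend only on $x$.

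For the divisibility $b_{\mathfrak a\mathcal O_{X\times Y}}\mid b_{\mathfrak a}$, take a relation in $\mathcal D_X[s_1,\ldots,s_r]$ realizing (\ref{def_b_function_ideal}) for $\mathfrak a$. The same operators, viewed in $\mathcal D_{X\times Y}$, give the relation on $X\times Y$.

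For the reverse divisibility, start from a relation on $X\times Y$ with operators $P_u\in\mathcal D_{X\times Y}[s]$. Expand each $P_u=\sum_{\beta,\gamma} a_{\beta,\gamma}(x,y)\partial_x^\beta\partial_y^\gamma$. Every term with $\gamma\neq0$ kills $\prod f_k^{s_k+u_k}$, since that expression is independent of $y$, so we may drop those terms. Then restrict to a point $y=y_0$; this is a specialization of polynomial coefficients in $y$, which commutes with $\partial_x$ and with multiplication by the $f$'s. The result is an identity in the ambient module over $X$, giving $b_{\mathfrak a\mathcal O_{X\times Y}}(s)\in$ the ideal of valid $b$'s for $\mathfrak a$. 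Hence $b_{\mathfrak a}\mid b_{\mathfrak a\mathcal O_{X\times Y}}$.

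For the local version, work at $(x,y_0)$ using germs. The $y$-dependence of the coefficients is analytic or local, and specializing $y=y_0$ is still fine because the $f$'s are independent of $y$. Taking the lcm over points, via the lcm definition, then gives equality globally.

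The main obstacle I expect is the justification that the ambient-module identity survives specialization $y\mapsto y_0$ and dropping the $\partial_y$ terms. This requires the module $\mathcal O[(\prod f_k)^{-1},s]\prod f_k^{s_k}$ over $X\times Y$ to be $\mathcal O_Y\otimes$ (the one over $X$) as a $\mathcal D_X$-module. I would verify this directly from the explicit actions. The étale reduction for general $Y$ is routine but should be stated carefully.
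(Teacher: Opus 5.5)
Your proposal is correct and follows essentially the same route as the paper. One divisibility comes from pulling back functional equations from $X$. For the other, you put the $Y$-derivatives on the right so they annihilate the $y$-independent symbol, then restrict the coefficients to $y=y_0$. You work with local Bernstein--Sato polynomials and take the lcm, using $\operatorname{codim}_{X\times Y}(Z\times Y)=\operatorname{codim}_X Z$.

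The only real difference is your étale reduction to $X\times\mathbb A^m$. It is unnecessary, and it would itself need an invariance argument. As in the paper, it suffices to take étale local coordinates $y_1,\ldots,y_m$ near $y_0$, for which $\mathcal D_Y$ is free over $\mathcal O_Y$ on the monomials $\partial_y^\gamma$, and run the same argument directly.
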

\begin{proof}
	It is enough to compare the local Bernstein--Sato polynomials. Fix $(x,y)\in X\times Y$. Every local functional equation for $\mathfrak a$ at $x$ pulls back to a local functional equation for $\mathfrak a\mathcal O_{X\times Y}$ at $(x,y)$, and hence
	$b_{\mathfrak a\mathcal O_{X\times Y},(x,y)}(s)\mid b_{\mathfrak a,x}(s)$.
	
	Conversely, choose local coordinates on the smooth germ $(Y,y)$ and write every differential operator occurring in a local functional equation at $(x,y)$ in normal form, with all derivatives in the $Y$-directions placed on the right. Since the generators of $\mathfrak a\mathcal O_{X\times Y}$ are pulled back from $X$, every term containing a positive-order derivative in a $Y$-direction annihilates the corresponding formal symbol. Evaluating the remaining coefficient functions at $y$ therefore gives a local functional equation at $x$. Thus
	$b_{\mathfrak a,x}(s)\mid b_{\mathfrak a\mathcal O_{X\times Y},(x,y)}(s)$.
	
	Consequently,
	$b_{\mathfrak a\mathcal O_{X\times Y},(x,y)}(s)=b_{\mathfrak a,x}(s)$.
	Taking least common multiples over all points and observing that
	$\operatorname{codim}_{X\times Y}(Z\times Y)=\operatorname{codim}_X Z$
	gives $b_{Z\times Y}(s)=b_Z(s)$.
\end{proof}
	\subsection{Motivic Zeta Functions and the Monodromy Conjecture}\label{monodromy_conj}
	
	The monodromy conjecture, which was originally formulated for a single regular function, establishes the relation between the poles of the motivic zeta function, the eigenvalues of local Milnor monodromy action and the roots of Bernstein-Sato polynomials. We have introduced the Bernstein-Sato polynomial in the preceding subsection, so we will give more details about eigenvalues of the monodromy action and the zeta function.
	
	Let $f\in\Gamma(X,\mathcal O_X)$ be a nonconstant regular function. For every $x\in f^{-1}(0)$, the analytification of $f$ defines a local Milnor fibre at $x$ and hence a monodromy action on its cohomology. We denote by $E(f)$ the set of all eigenvalues of the cohomology groups of the Milnor fiber at any point $x \in V(f)$. These eigenvalues have deep connection with the roots of the Bernstein-Sato polynomial of $f$. In fact, the authors of \cite{Kashiwara_Bernstein_and_eigenvalue_of_monodromy} and \cite{Malgrange_Bernstein_eigenvalue} showed that the set $$\mathrm{Exp}(V(b_{f})):=\{e^{2\pi \sqrt{-1}s_{0}}\; | \; b_{f}(s_{0})=0\}$$ equals $E(f)$.
	
	The other side of the monodromy conjecture concerns the motivic zeta function. It is defined via motivic integration, but it can be computed using a log resolution as in the following theorem. The original form is about a single polynomial if we take the ideal $I=(f)$, but the theorem holds for the ideal case.
	
	\begin{theorem}[\cite{Motivic_Igusa_Zeta_Function}]\label{theorem_denef_loeser}
		Let $X$ be a smooth complex algebraic variety, let $\mathfrak a\subseteq\mathcal O_X$ be a nonzero coherent ideal sheaf, and let
		$\mu:Y\longrightarrow X$ be a log resolution of $\mathfrak a$. Suppose $E_{i}(i \in S)$ are the irreducible components of $\mu^{*}(\mathfrak{a})$ and relative canonical divisor $K_{Y/X}$ such that $\mu^{*}(\mathfrak{a})=\sum_{i \in S}N_{i}E_{i}$ and $K_{Y/X}=\sum_{i \in S}(\nu_{i}-1)E_{i}$. Then the motivic zeta function of $\mathfrak{a}$, denoted by $Z^{\mathrm{mot}}_{\mathfrak{a}}(s)$, can be calculated by
		\begin{flalign*}
			Z^{\mathrm{mot}}_{\mathfrak{a}}(s)=\sum_{J \subset S}[\mathring{E}_{J}]\prod_{i \in J}\frac{\mathbb{L}-1}{\mathbb{L}^{N_{i}s+\nu_{i}}-1},
		\end{flalign*}
		where $\mathring{E}_{J}:=(\cap_{i \in J}E_{i})\setminus (\cup_{j \notin J}E_{j})$. Here we call $(N_{i},\nu_{i})(i \in S)$ the data of the log resolution.
	\end{theorem}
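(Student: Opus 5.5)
The plan is the standard Denef--Loeser argument: motivic change of variables along $\mu$, followed by a local computation on the simple normal crossing model $Y$. I would first fix the definition. Write $d=\dim X$ and let $\mathcal L(X)$ be the arc space with its motivic measure $\mu_X$, normalized so that $\mu_X(\mathcal L(X))=[X]\mathbb L^{-d}$ (the normalization only rescales both sides uniformly). For an arc $\gamma$, let $\operatorname{ord}_{\mathfrak a}(\gamma)$ be the order of vanishing of $\gamma^*\mathfrak a$. Then
$$Z^{\mathrm{mot}}_{\mathfrak a}(s)=\int_{\mathcal L(X)}\mathbb L^{-s\operatorname{ord}_{\mathfrak a}}\,d\mu_X ,$$
equivalently $\sum_{m\ge 0}\mu_X(\{\operatorname{ord}_{\mathfrak a}=m\})\,\mathbb L^{-ms}$. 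Arcs contained in $V(\mathfrak a)$ form a set of measure zero, so they can be ignored.

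\textbf{Step 1: change of variables.} Apply the Kontsevich--Denef--Loeser change of variables formula to the proper birational morphism $\mu:Y\to X$:
$$\int_{\mathcal L(X)}\mathbb L^{-s\operatorname{ord}_{\mathfrak a}}\,d\mu_X=\int_{\mathcal L(Y)}\mathbb L^{-s\operatorname{ord}_{\mathfrak a}\circ\mu_\infty-\operatorname{ord}_{K_{Y/X}}}\,d\mu_Y .$$
The formula applies because $\mu_\infty$ is a bijection outside the measure-zero set of arcs lying in the exceptional locus, and the fibres of the truncations over $\operatorname{ord}_{K_{Y/X}}=e$ are piecewise affine bundles of rank $e$. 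The formula concerns only $\mu$, not $\mathfrak a$, so the ideal case costs nothing extra here. Moreover $\operatorname{ord}_{\mathfrak a}\circ\mu_\infty=\operatorname{ord}_{\mu^*\mathfrak a}=\sum_i N_i\operatorname{ord}_{E_i}$, because $\mu^*\mathfrak a=\mathcal O_Y(-\sum N_iE_i)$ is invertible. The integrand is therefore $\mathbb L^{-\sum_i (N_is+\nu_i-1)\operatorname{ord}_{E_i}}$.

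\textbf{Step 2: stratify and compute locally.} Partition $\mathcal L(Y)$, up to measure zero, according to the stratum $\mathring E_J$ containing the origin $\gamma(0)$ and the orders $k_i=\operatorname{ord}_{E_i}(\gamma)\ge 1$ for $i\in J$. Arcs with origin in $\mathring E_J$ automatically have order $0$ along $E_j$ for $j\notin J$.

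Cover $Y$ by opens with étale coordinates $y_1,\dots,y_d$ in which $E_i=\{y_i=0\}$ for $i\in J$. On such an open, the cylinder $\{\gamma(0)\in\mathring E_J,\ \operatorname{ord}y_i=k_i\}$ is described at a sufficiently high truncation level $\ell$: the $y_i$-coefficient of order $k_i$ is nonzero and the lower coefficients vanish, while all other coefficients are free. Its measure is $[\mathring E_J](\mathbb L-1)^{|J|}\mathbb L^{-\sum_{i\in J}k_i}\mathbb L^{-d}$.

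Gluing these local computations is the \textbf{main obstacle}. I would handle it by showing the truncation map is a Zariski piecewise trivial fibration over $\mathring E_J$. Étale coordinates induce isomorphisms of jet schemes over the base, so the class is multiplicative and the local computations glue via the scissor relations in $K_0(\mathrm{Var})$.

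\textbf{Step 3: sum the geometric series.} Summing over $k_i\ge 1$ independently for each $i\in J$ gives
$$\sum_{k\ge1}(\mathbb L-1)\mathbb L^{-k}\mathbb L^{-(N_is+\nu_i-1)k}=\frac{\mathbb L-1}{\mathbb L^{N_is+\nu_i}-1}.$$
Summing over $J\subseteq S$ (with the empty $J$ contributing $[\mathring E_\emptyset]$) yields the stated formula, up to the overall factor $\mathbb L^{-d}$ coming from the chosen normalization. These series are to be read in the completed, localized Grothendieck ring.
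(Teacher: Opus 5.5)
Your proposal is correct and follows the standard Denef--Loeser argument: change of variables along $\mu$, then the explicit cylinder computation on the simple normal crossing model. The paper gives no proof of its own and simply cites Denef--Loeser for this statement, so your route is the one it relies on. One cosmetic adjustment: the displayed formula has no prefactor, so fix the normalization $\mu_X(\mathcal L(X))=[X]$ (i.e.\ $\mu_X(\pi_\ell^{-1}A)=[A]\mathbb L^{-\ell d}$) from the start, and your Step 3 then gives the stated identity exactly rather than up to a factor of $\mathbb L^{-d}$.
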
 
	
	The motivic zeta function can be specialized into the topological zeta function by imposing the Euler characteristic $\chi(\cdot)$ into the expression, as in the following definition.
	\begin{definition}[Topological zeta function]
		As in the setting above, the topological zeta function of $\mathfrak{a}$ is given by
		\begin{flalign*}
			Z^{\mathrm{top}}_{\mathfrak{a}}(s):=\sum_{J \subset S}\chi(\mathring{E}_{J})\prod_{i \in J}\frac{1}{N_{i}s+\nu_{i}}.
		\end{flalign*}
	\end{definition}
	
	Now we can state the monodromy conjecture.
	
	\begin{conjecture}[Monodromy Conjecture, \cite{Motivic_Igusa_Zeta_Function}]\label{mc}
		Let $X$ be a smooth complex algebraic variety and let $f\in\Gamma(X,\mathcal O_X)$ be a nonconstant regular function. If $s_0$ is a pole of $Z_{f}^{\mathrm{mot}}(s)$, then $e^{s_0\cdot 2\pi \sqrt{-1}} \in E(f)$.
	\end{conjecture}

	The monodromy conjecture can be generalized to the ideal case. When we replace $f$ with a non-zero coherent ideal sheaf $\mathfrak a\subseteq\mathcal O_X$, we can replace $Z^{\mathrm{mot}}_{f}$ with $Z^{\mathrm{mot}}_{\mathfrak{a}}$. By \cite{Bernstein_Sato_polynomial_of_arbitrary_varieties}, the Bernstein-Sato polynomial can also be generalized to the ideal case, denoted by $b_{\mathfrak{a}}(s)$. Unfortunately, there is no counterpart of the local Milnor monodromy eigenvalue for the ideal version. However, Verdier introduced the notion of Verdier monodromy in \cite{Verdier_new_monodromy} which is equivalent to the Milnor monodromy version and it can be naturally generalized to the ideal case. This generalization is compatible with that of the Bernstein-Sato polynomial, since in \cite{Bernstein_Sato_polynomial_of_arbitrary_varieties} Budur, Musta\c{t}ă and Saito proved that the set $$\mathrm{Exp}(V(b_{\mathfrak{a}}(s))):=\{e^{2\pi \sqrt{-1}s_{0}}\; | \; b_{\mathfrak{a}}(s_{0})=0\}$$ equals the set of all eigenvalues of Verdier monodromy.

	\section{A Polynomial PDE Criterion for the $n/d$ Root}\label{sec3}
	From this point onward, we specialize to $X=\mathbb A_{\mathbb C}^n$.
	Thus
	$\Gamma(X,\mathcal O_X)=\mathbb C[x_1,\ldots,x_n],
	\Gamma(X,\mathcal D_X)=D_x$,
	where $D_x$ is the $n$-th Weyl algebra. All ideals considered below are generated by homogeneous polynomials of the same degree.
	\begin{definition}
	Let $I \subset \mathbb{C}[x_{1},...,x_{n}]$ be an ideal generated by homogeneous polynomials of degree $d$. We say $I$ has $\frac{n}{d}$ property if $b_{I}(-\frac{n}{d})=0$.
	\end{definition}

	Assume $X=\mathbb{C}^{n}$. Let us fix an ideal $I \subset \mathbb{C}[x_{1},...,x_{n}]$ with degree $d$ generators $f_{1},...,f_{r}$. We consider the module $\mathcal{D}_{x,t}f_{1}^{s_{1}}...f_{r}^{s_{r}}$ which is isomorphic to $\iota_{+}\mathcal{O}_{X}$. We have the following characterization of $\mathcal{D}_{x,t}f_{1}^{s_{1}}...f_{r}^{s_{r}}$.
	
	\begin{proposition}
	$\mathcal{D}_{x,t}f_{1}^{s_{1}}...f_{r}^{s_{r}} \cong \mathcal{D}_{x,t}/\mathcal{D}_{x,t}(f_{i}-t_{i},\partial_{i}+\sum_{k=1}^{r}\partial_{i}f_{k}\partial_{t_{k}})$.
	\end{proposition}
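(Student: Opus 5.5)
The plan is to compare both sides with the standard presentation of a graph embedding: the natural surjection from $\mathcal D_{x,t}$ onto $\mathcal D_{x,t}f_1^{s_1}\cdots f_r^{s_r}$ sending $1$ to the symbol. First I will verify that $f_k-t_k$ and $\partial_i+\sum_k\partial_if_k\,\partial_{t_k}$ annihilate $f_1^{s_1}\cdots f_r^{s_r}$, using the actions on the ambient module $\mathcal N_{\mathbf f}$. For the first, $t_k$ multiplies by $f_k$ and shifts $s_k\mapsto s_k+1$; on the symbol itself (with coefficient $1$) the shift does nothing, so $t_k\cdot f^{\mathbf s}=f_kf^{\mathbf s}$. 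For the second, $\partial_{t_k}f^{\mathbf s}=-s_kf_k^{-1}f^{\mathbf s}$, while $\partial_i f^{\mathbf s}=\sum_k s_k(\partial_if_k)f_k^{-1}f^{\mathbf s}$, and the two contributions cancel. Hence we get a surjection $\mathcal D_{x,t}/\mathcal D_{x,t}(f_i-t_i,\partial_i+\sum_k\partial_if_k\partial_{t_k})\to\mathcal D_{x,t}f^{\mathbf s}$.

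For injectivity I will not compute annihilators directly. Instead I will identify the quotient with $(i_{\mathbf f})_+\mathcal O_X$ via the earlier Proposition characterizing $(i_{\mathbf f})_+\mathcal O_U$ by exactly these generators, taking $U=X=\mathbb C^n$ with global coordinates. The remaining point is the identification $(i_{\mathbf f})_+\mathcal O_X\cong\mathcal D_Yf^{\mathbf s}$ asserted earlier in Section 2. To make it self-contained I would change coordinates $y_k=t_k-f_k$. In these coordinates the left ideal is generated by $y_k$ and the vector fields $\partial_{x_i}$ of the new coordinate system, so the quotient is $\mathbb C[x]\otimes\mathbb C[\partial_{y}]$, i.e.\ $\mathcal O_X\otimes\mathbb C[\partial_{t_1},\dots,\partial_{t_r}]$ with basis $\partial_t^\beta\bar1$.

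It then suffices to show that the elements $h\,\partial_t^\beta f^{\mathbf s}$, with $h\in\mathbb C[x]$ and $\beta$ ranging over multi-indices, are linearly independent in $\mathcal N_{\mathbf f}$. This is where the substance lies, and I expect it to be the main obstacle. We compute $\partial_t^\beta f^{\mathbf s}=\prod_k(-1)^{\beta_k}s_k(s_k-1)\cdots(s_k-\beta_k+1)f_k^{-\beta_k}f^{\mathbf s}$. The coefficient is a polynomial in $s$ whose leading monomial is $\pm s^\beta$, and distinct $\beta$ give distinct leading monomials. Given a finite relation $\sum_\beta h_\beta\partial_t^\beta f^{\mathbf s}=0$, I will choose $\beta$ maximal in the support and look at the coefficient of $s^\beta$. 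This forces $h_\beta f^{-\beta}=0$ in $\mathbb C[x][(f_1\cdots f_r)^{-1}]$, hence $h_\beta=0$, since all $f_k$ are nonzero and $\mathbb C[x]$ is a domain.

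The surjection therefore sends a basis to a linearly independent set, so it is an isomorphism. The care needed is only in ordering the multi-indices so that the leading monomials $s^\beta$ are genuinely distinct.
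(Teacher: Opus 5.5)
Your proof is correct, and it is more self-contained than the paper's. The paper gives no argument for this proposition. It identifies $\mathcal D_{x,t}f_1^{s_1}\cdots f_r^{s_r}$ with $(i_{\mathbf f})_+\mathcal O_X$ and falls back on the Section~2 presentation. That presentation is justified only by ``the proof is the same as in the case of a single polynomial'' plus a citation of Malgrange.

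You prove the isomorphism directly, in three steps:
\begin{enumerate}
\item Both families of generators annihilate $f^{\mathbf s}$.
\item The coordinate change $y_k=t_k-f_k$ is a polynomial automorphism of $\mathbb A^{n+r}$, with inverse $t_k=y_k+f_k$, so it induces an automorphism of the Weyl algebra. It turns the left ideal into $\mathcal D_{x,t}(y_k,\partial_{x_i'})$, and so the classes of $x^\gamma\partial_t^\beta$ form a $\mathbb C$-basis of the quotient.
\item The falling-factorial formula for $\partial_t^\beta f^{\mathbf s}$ shows that these basis classes map to linearly independent elements of $\mathcal N_{\mathbf f}$.
\end{enumerate}
This direct argument makes your detour through $(i_{\mathbf f})_+\mathcal O_X$ and the earlier Proposition superfluous, so you can drop it.

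Your route also uses exactly the change of variables, and essentially the basis statement of Lemma \ref{maximal_ideal}, that the paper uses immediately afterwards to pass from $\mathcal M$ to $\mathcal M'$. So that later step now rests on a proved isomorphism rather than a cited one. The paper's route, by contrast, ties the module to the graph-embedding and $V$-filtration framework in which $b_I$ is defined.

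In the leading-term step, make the order explicit. Take $\beta$ in the support maximal for the componentwise order, or of maximal $|\beta|$. The monomial $s^\beta$ occurs in $\prod_k s_k(s_k-1)\cdots(s_k-\gamma_k+1)$ only when $\gamma\ge\beta$ componentwise. Hence only the $\beta$-term contributes to the coefficient of $s^\beta$, and that coefficient is $\pm h_\beta\prod_k f_k^{-\beta_k}$. This vanishes in $\mathbb C[x][(f_1\cdots f_r)^{-1}]$ only if $h_\beta=0$.
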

	
	Now we consider the module $\mathcal{M}:=\mathcal{D}_{x,t}f_{1}^{s_{1}}...f_{r}^{s_{r}}/(\partial_{1},...,\partial_{n})\mathcal{D}_{x,t}f_{1}^{s_{1}}...f_{r}^{s_{r}}$. We have the following proposition.
	
	\begin{proposition}
	If $1 \in \mathcal{M}$ is non-zero, then $b_{I}(-\frac{n}{d})=0$.
	\end{proposition}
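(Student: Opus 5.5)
Let $N:=\mathcal D_{x,t}f_1^{s_1}\cdots f_r^{s_r}$ and write $\delta:=f_1^{s_1}\cdots f_r^{s_r}$, so $\mathcal M=N/\partial N$ with $\partial N:=\sum_i\partial_iN$. The plan is to show that $s$ acts on the class $[\delta]\in\mathcal M$ by the scalar $-n/d$. Since $b_I(s)$ annihilates $\delta$ modulo $\sum_k \mathcal D_{x}[s_i,s_{ij}]f_k\delta$, this scalar action then transports a nonzero class to give $b_I(-n/d)=0$.

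\textbf{Step 1 (weighted Euler identity).} Put $E=\sum_i x_i\partial_i$. Homogeneity of degree $d$ gives $E(f_k)=df_k$. Applying the relation $\partial_i\delta=-\sum_k\partial_i(f_k)\partial_{t_k}\delta$ from the presentation of $N$, together with $t_k\delta=f_k\delta$, we get
$$E\delta=-\sum_k E(f_k)\partial_{t_k}\delta=-d\sum_k f_k\partial_{t_k}\delta=-d\sum_k\partial_{t_k}t_k\delta=d\,s\,\delta .$$
On the other hand $E=\sum_i\partial_ix_i-n$. Hence modulo $\partial N$ we have $E\delta\equiv-n\delta$, and therefore $(ds+n)\delta\in\partial N$, i.e. $(s+n/d)[\delta]=0$ in $\mathcal M$. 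The operators $x_j$ do not commute with $\partial_i$, but $s$, $s_{ij}$ and the $t_k$ do. So the same computation, applied to $P\delta$ with $P\in\mathbb C[s_i,s_{ij}]$, shows that $s+n/d$ kills the image of $\mathbb C[s_i,s_{ij}]\delta$ in $\mathcal M$.

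\textbf{Step 2 (passing to the Bernstein--Sato quotient).} By the description recalled above, $b_I(s)\delta\in\sum_k\mathcal D_x[s_i,s_{ij}]f_k\delta$. Write this as $b_I(s)\delta=\sum_k Q_k f_k\delta$ with $Q_k\in\mathcal D_x[s_i,s_{ij}]$. Modulo $\partial N$, every $Q_k$ may be replaced by its normal-ordered part with no $\partial$'s on the left, that is, by an element of $\mathbb C[x][s_i,s_{ij}]$. I would then use the grading by total degree in $x$ (with $\deg t_k=d$ and $\deg\partial_{t_k}=-d$) to compare weights. The left side $b_I(s)\delta$ has weight $0$, while each $f_k\delta=t_k\delta$ has weight $d$. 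The weight operator is essentially $E-d\sum_k t_k\partial_{t_k}$, and $\mathcal M$ decomposes into eigenspaces for it. So only the weight-$0$ components of $Q_kt_k$ survive, and these lie in $\mathbb C[s_i,s_{ij}]\,t_k\partial_{t_{k'}}$-type combinations, i.e. in the span of the $s_{ij}$-terms applied to $\delta$.

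Evaluating with Step 1, $b_I(-n/d)[\delta]$ must then equal an element of the image of $\sum_{k}\mathbb C[s_i,s_{ij}]\,t_k\,(\cdots)\delta$. I would show this element is a combination of $(s+n/d)$-multiples, and hence zero. Consequently $b_I(-n/d)[\delta]=0$. Since the hypothesis says $[\delta]=1\neq0$ in $\mathcal M$, we conclude $b_I(-n/d)=0$.

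\textbf{Main obstacle.} The delicate part is Step 2: controlling the terms $Q_kf_k\delta$ in $\mathcal M$, and showing they contribute nothing to the weight-zero class $[\delta]$. This requires that the weight-zero piece of $\sum_k\mathcal D_x[s_i,s_{ij}]t_k\delta$ modulo $\partial N$ lies in $(s+n/d)\mathcal M$. It is not literally zero, since $t_k\partial_{t_{k'}}$ has weight $0$.

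I would try first to show that the image of $\sum_k\mathcal D_x[s_i,s_{ij}]f_k\delta$ in $\mathcal M$ does not contain $[\delta]$. If this route stalls, the alternative is to use the $V$-filtration characterization directly: the surviving class of $\delta$ in $\mathcal M$ yields a nonzero quotient of $V^0\mathcal D_Y\delta/V^1\mathcal D_Y\delta$ on which $s=-n/d$.
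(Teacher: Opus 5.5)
Your Step 1 is correct and coincides with the paper's Euler argument: $E\delta=ds\,\delta$ together with $E=\sum_i\partial_ix_i-n$ gives $b_I(s)\delta\equiv b_I(-n/d)\delta$ modulo $\partial N$.

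The gap is Step 2, where the real content of the proposition lies. You never establish that $b_I(s)\delta\in\partial N$, and the weight count you sketch goes the wrong way. After your own normal-ordering reduction, each $Q_k$ lies in $\mathbb{C}[x][s_i,s_{ij}]$. Since $s_i$ and $s_{ij}$ have weight $0$ and $t_k$ (equivalently $f_k$) has weight $d$, every term $Q_kt_k\delta$ has weight at least $d$. So there is no weight-zero contribution at all. The weight-zero operators $t_k\partial_{t_{k'}}$ enter only through the factors $s_{ij}$ of $Q_k$, and these cannot cancel the weight of $t_k$. Your assertion that the weight-zero piece is ``not literally zero'' is therefore wrong. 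The promised argument that it consists of multiples of $s+n/d$ is both unsupported and unnecessary. (Also, the weight operator is $E+d\sum_kt_k\partial_{t_k}$, not $E-d\sum_kt_k\partial_{t_k}$.)

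The missing step, as in the paper, runs as follows.
\begin{itemize}
\item $N$ is graded, because its annihilator is generated by the homogeneous elements $f_k-t_k$ and $\partial_i+\sum_k\partial_i(f_k)\partial_{t_k}$.
\item $\delta$ has weight $0$ and $f_k\delta$ has weight $d$. Taking the weight-zero component of $b_I(s)\delta=\sum_kQ_kf_k\delta$, you may assume each $Q_k$ is homogeneous of weight $-d$.
\item Write the $\mathcal{D}_x$-part of $Q_k$ with derivatives on the left. Each monomial $\partial^bx^a$ then has $|b|=|a|+d\ge1$.
\item Since $s_i$ and $s_{ij}$ commute with the $\partial_i$, this gives $Q_k\in\sum_i\partial_i\mathcal{D}_{x,t}$, hence $b_I(s)\delta\in\partial N$.
\end{itemize}
Equivalently, in your setup: each $\partial_iN$ is a graded subspace, so $\mathcal{M}$ is graded. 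Projecting $[b_I(s)\delta]=\sum_k[Q_kt_k\delta]$ to weight $0$ kills the right-hand side.

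Combined with Step 1, this gives $b_I(-n/d)[\delta]=0$, hence $b_I(-n/d)=0$ when $[\delta]\ne0$. No $V$-filtration detour is needed.
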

	\begin{proof}
	According to the definition of Bernstein-Sato polynomial, there exist $P_{k} \in \mathcal{D}_{x}[s_{i},s_{ij}]$ such that $b_{I}(s_{1}+...+s_{r})f_{1}^{s_{1}}...f_{r}^{s_{r}}=\sum_{k=1}^{r}P_{k}f_{k} \cdot f_{1}^{s_{1}}...f_{r}^{s_{r}}$, where $s_{ij}=\partial_{t_{i}}t_{j}$. If we set deg $x_{i}=1$, deg $\partial_{i}=-1$, deg $t_{i}=d$ and deg $\partial_{t_{i}}=-d$, then $(f_{i}-t_{i},\partial_{i}+\sum_{k=1}^{r}\partial_{i}f_{k}\partial_{t_{k}})$ is a homogeneous ideal and deg $s_{ij}=s_{i}=0$. Since $\sum_{k=1}^{r}P_{k}f_{k}-b_{I}(s) \in (f_{i}-t_{i},\partial_{i}+\sum_{k=1}^{r}\partial_{i}f_{k}\partial_{t_{k}})$, we may assume deg $P_{k}=-d$. This implies that $P_{k} \in (\partial_{1},...,\partial_{n})\mathcal{D}_{x,t}$, so $0 \equiv \sum_{k=1}^{r}P_{k}f_{k}-b_{I}(s) \equiv -b_{I}(s)$ in $\mathcal{M}$. Note that since $f_{1},...,f_{r}$ are of degree $d$, we have $$\sum_{i=1}^{n}x_{i}\partial_{i} \cdot f_{1}^{s_{1}}...f_{r}^{s_{r}}=d(s_{1}+...+s_{r})f_{1}^{s_{1}}...f_{r}^{s_{r}}.$$
	This tells us $ds-\sum_{i=1}^{n}x_{i}\partial_{i} \equiv 0$ in $\mathcal{M}$, so we have $$0 \equiv b_{I}(s) \equiv b_{I}(\frac{\sum_{i=1}^{n}x_{i}\partial_{i}}{d}) \equiv b_{I}(\frac{-n+\sum_{i=1}^{n}\partial_{i}x_{i}}{d}) \equiv b_{I}(-\frac{n}{d}).$$
	This implies that non-zero $1 \in \mathcal{M}$ will force $b_{I}(-\frac{n}{d})=0$. 
	\end{proof}
	
	To characterize $\mathcal{M}$ more clearly, we consider the following change of variables.
	\begin{flalign*}
		x_{i}'=x_{i}(1 \le i \le n),t_{j}'=t_{j}-f_{j}(1 \le j \le r).
	\end{flalign*}
	Calculation shows that the corresponding differential operators will be changed into the following forms.
	\begin{flalign*}
		\partial_{i}=\partial_{i}'-\sum_{k=1}^{r}\partial_{i}f_{k}\partial_{t_{k}}'(1 \le i \le n), \partial_{t_{j}}=\partial_{t_{j}}'(1 \le j \le r).
	\end{flalign*}
	After this change of variables, we see that $\mathcal{M} \cong \mathcal{M}'=\mathcal{D}_{x,t}/\mathcal{D}_{x,t}(t_{j},\partial_{i})+(\partial_{i}-\sum_{k=1}^{r}\partial_{i}f_{k}\partial_{t_{k}})\mathcal{D}_{x,t}$. With the reduction of the ideal $(t_{j},\partial_{i})$, we may assume that every element in $\mathcal{M}'$ is of the form $\sum g_{\alpha_{1},...,\alpha_{r}}\partial_{1}^{\alpha_{1}}...\partial_{r}^{\alpha_{r}}$. This ideal is so special that we have the following lemma.
	\begin{lemma}\label{maximal_ideal}
	Let $A=\sum g_{\alpha_{1},...,\alpha_{r}}\partial_{t_{1}}^{\alpha_{1}}...\partial_{t_{r}}^{\alpha_{r}}$ be an element in $\mathcal{D}_{x,t}$ with $g_{\alpha_{1},...,\alpha_{r}} \in \mathbb{C}[x_{1},...,x_{n}]$. If $A \in \mathcal{D}_{x,t}(t_{j},\partial_{i})$, then $g_{\alpha_{1},...,\alpha_{r}}=0$ for all $\alpha_{1},...,\alpha_{r}$.
	\end{lemma}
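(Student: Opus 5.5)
The plan is to identify the left module $\mathcal{D}_{x,t}/\mathcal{D}_{x,t}(t_{j},\partial_{i})$ with a concrete module and check that $A$ acts nontrivially there. Since the left ideal $\mathcal{D}_{x,t}(t_1,\ldots,t_r,\partial_1,\ldots,\partial_n)$ annihilates a suitable cyclic generator, consider the polynomial module
\[
N=\mathbb{C}[x_1,\ldots,x_n]\otimes\mathbb{C}[\partial_{t_1},\ldots,\partial_{t_r}]\cdot\delta ,
\]
i.e. $\mathcal{O}$ in the $x$-directions tensored with the delta module $\mathbb{C}[\partial_{t}]\delta(t)$ in the $t$-directions. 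Here $\partial_{x_i}$ acts on the constant $1$ by zero, and $t_j\delta=0$. Equivalently, $N=\mathcal{D}_{x,t}/\mathcal{D}_{x,t}(t_j,\partial_i)$, which is the standard description $\mathcal{D}_x/\mathcal{D}_x(\partial_i)\cong\mathbb{C}[x]$ tensored with $\mathcal{D}_t/\mathcal{D}_t(t_j)\cong\mathbb{C}[\partial_t]$.

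The key step is to show that the monomials $x^\beta\partial_t^\alpha\delta$ form a $\mathbb{C}$-basis of $N$. This is the standard fact that $\mathcal{D}_{x,t}$ has PBW basis $x^\beta\partial_x^\gamma t^\mu\partial_t^\alpha$. One can also write elements in the order $x^\beta\partial_t^\alpha\, t^\mu\partial_x^\gamma$, since $x$ commutes with $\partial_t$ and $t$ commutes with $\partial_x$. The left ideal generated by the $t_j$ and $\partial_i$, placed on the right, is then spanned by the basis elements with $(\mu,\gamma)\neq 0$. Two points need care:
\begin{itemize}
\item the ordering $x^\beta\partial_t^\alpha t^\mu\partial_x^\gamma$ is itself a basis, which follows from the PBW theorem applied to the tensor product $D_x\otimes D_t$ of Weyl algebras;
\item the span of the elements with $(\mu,\gamma)\neq0$ is closed under left multiplication.
\end{itemize}
The cleanest way to handle the second point is to avoid it: define the action directly on $N=\mathbb{C}[x]\otimes\mathbb{C}[\partial_t]$, with $x_i$ acting by multiplication, $\partial_i$ by differentiation in $x$, $\partial_{t_j}$ by multiplication, and $t_j$ by $-\partial/\partial(\partial_{t_j})$ (the Fourier-type action). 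Then verify the Weyl relations, which is routine.

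Given this module structure, the lemma follows. The left ideal $\mathcal{D}_{x,t}(t_j,\partial_i)$ annihilates $1\otimes1$, because $t_j(1\otimes1)=0$ and $\partial_i(1\otimes1)=0$. On the other hand,
\[
A\cdot(1\otimes1)=\sum g_\alpha(x)\otimes\partial_t^\alpha ,
\]
and this vanishes in $N=\mathbb{C}[x]\otimes\mathbb{C}[\partial_t]$ only if every $g_\alpha=0$. So if $A$ lies in the ideal, all $g_\alpha=0$.

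The only potential obstacle is the sign convention for the $t_j$-action, which must be chosen so that $[\partial_{t_j},t_j]=1$. Taking $t_j=-\frac{\partial}{\partial \xi_j}$ on $\mathbb{C}[\xi]$, where $\xi_j$ stands for $\partial_{t_j}$, gives $[\xi_j,-\partial_{\xi_j}]=1$, so this is the correct choice and there is no real difficulty.
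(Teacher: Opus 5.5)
Your proof is correct, but it takes a different route from the paper.

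The paper works inside the left ideal $L=\mathcal{D}_{x,t}(t_j,\partial_i)$ using commutators. Since $t_k$ and $\partial_i$ commute with all generators of $L$, the ideal is stable under $[t_k,\cdot\,]$ and $[\partial_i,\cdot\,]$. Iterated brackets with the $t_k$, taken at the top $\partial_{t_k}$-degree each time, strip off the $\partial_t$-monomials and leave a nonzero $g\in\mathbb{C}[x_1,\ldots,x_n]\cap L$. Iterated brackets with the $\partial_i$ then differentiate $g$ down to a nonzero constant in $L$, which the paper declares a contradiction.

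You instead exhibit an explicit representation: $\mathbb{C}[x]\otimes\mathbb{C}[\xi]$ with $\partial_{t_j}\mapsto\xi_j$ and $t_j\mapsto-\partial_{\xi_j}$. In it $L$ kills $1\otimes 1$, while $A(1\otimes1)=\sum_\alpha g_\alpha\otimes\xi^\alpha$ records every coefficient. Your sign check $[\xi_j,-\partial_{\xi_j}]=1$ is right, so this is a genuine $\mathcal{D}_{x,t}$-module.

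The paper's argument is quicker to state. However, its final step uses without comment that $L$ is a proper left ideal, and justifying $1\notin L$ needs essentially your construction or a PBW argument. Your version supplies this directly. Combined with the spanning statement the paper uses just before the lemma (every class mod $L$ is represented by some $\sum g_\alpha\partial_t^\alpha$), it also gives the isomorphism $\mathcal{D}_{x,t}/L\cong\mathbb{C}[x]\otimes\mathbb{C}[\partial_t]$. That isomorphism is the structural fact behind the PDE system derived afterwards.

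For the lemma itself you only need that $N$ is a module with $L\cdot(1\otimes1)=0$. The claimed identification $N\cong\mathcal{D}_{x,t}/L$ and the PBW discussion in your second paragraph can be dropped.
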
	
	\begin{proof}
	We assume that $A \neq 0$. If $A \in (t_{j},\partial_{i})$, then the commutator $[t_{k},A] \in (t_{j},\partial_{i})$ for all $k$. This commutator will give a new element with lower degree in $\partial_{t_{k}}$. We can repeat this procedure until the degree of all $\partial_{t_{j}}$ is $0$. Now we get some non-zero $g \in \mathbb{C}[x_{1},...,x_{n}]$ with $g \in (t_{j},\partial_{i})$. Similarly we have the commutator $[\partial_{i},g] \in (t_{j}, \partial_{i})$. After repeating the procedure, we will get some non-zero constant in $(t_{j}, \partial_{i})$, a contradiction, so the lemma holds.
	\end{proof}
	
	We now characterize the condition that the class of $1$ vanishes in $M'$. The equality $1=0$ in $M'$ is equivalent to the existence of $D_{1},...,D_{n}$ such that $1+\sum_{i=1}^{n}(\partial_{i}-\sum_{k=1}^{r}\partial_{i}f_{k}\partial_{t_{k}})D_{i} \in \mathcal{D}_{x,t}(t_{j},\partial_{i})$, where $D_{i}=\sum g_{i,\alpha_{1},...,\alpha_{r}}\partial_{t_{1}}^{\alpha_{1}}...\partial_{t_{r}}^{\alpha_{r}}$ with $g_{i,\alpha_{1},...,\alpha_{r}} \in \mathbb{C}[x_{1},...,x_{n}]$. After expanding the expression $1+\sum_{i=1}^{n}(\partial_{i}-\sum_{k=1}^{r}\partial_{i}f_{k}\partial_{t_{k}})(\sum g_{i,\alpha_{1},...,\alpha_{r}}\partial_{t_{1}}^{\alpha_{1}}...\partial_{t_{r}}^{\alpha_{r}})$ and applying Lemma \ref{maximal_ideal}, we can get the following partial differential equations of $g_{i,\alpha_{1},...,\alpha_{r}}$.
	\begin{flalign*}
	\sum_{i=1}^{n}\partial_{i}g_{i,0,...,0}&=-1,   \\
	\sum_{i=1}^{n}\partial_{i}g_{i,\alpha_{1},...,\alpha_{r}}&=\sum_{i=1}^{n}\sum_{k=1}^{r}\partial_{i}f_{k} \cdot g_{i,\alpha_{1},...,\alpha_{k}-1,...,\alpha_{r}}(\alpha_{1}+...+\alpha_{r} \ge 1).
	\end{flalign*}
	In the notations above, $\alpha_{1},...,\alpha_{r} \in \mathbb{Z}_{\ge 0}$ and we set $g_{i,\alpha_{1},...,\alpha_{r}}=0$ if any $\alpha_{j}<0$. Thus we have the following proposition.
	
	\begin{proposition}\label{PDE}
	If the following partial differential equations
	\begin{flalign*}
		\sum_{i=1}^{n}\partial_{i}g_{i,0,...,0}&=-1,   \\
		\sum_{i=1}^{n}\partial_{i}g_{i,\alpha_{1},...,\alpha_{r}}&=\sum_{i=1}^{n}\sum_{k=1}^{r}\partial_{i}f_{k} \cdot g_{i,\alpha_{1},...,\alpha_{k}-1,...,\alpha_{r}}(\alpha_{1}+...\alpha_{r} \ge 1)
	\end{flalign*}
	do not have finite polynomial solutions, then $b_{I}(-\frac{n}{d})=0$. A finite polynomial solution means that all $g_{i,\alpha_{1},...,\alpha_{r}} \in \mathbb{C}[x_{1},...,x_{n}]$ and there exists an integer $N$ such that for all $\alpha_{1}+...\alpha_{r}>N$, $g_{i,\alpha_{1},...,\alpha_{r}}=0$.
	\end{proposition}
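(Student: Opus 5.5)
The plan is to prove the contrapositive. Assume $b_I(-\frac{n}{d})\neq 0$. By the preceding proposition, this forces the class of $1$ in $\mathcal{M}$ to vanish. Under the coordinate change $x_i'=x_i$, $t_j'=t_j-f_j$, which is an automorphism of $\mathcal{D}_{x,t}$, the class of $1$ then vanishes in $\mathcal{M}'$. So the task is to turn $1=0$ in $\mathcal{M}'$ into a finite polynomial solution of the displayed system, following the derivation sketched just before the statement.

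\textbf{Step 1: an explicit relation.} Vanishing in $\mathcal{M}'$ means
$$1\in \mathcal{D}_{x,t}(t_j,\partial_i)+\sum_{i=1}^n\Bigl(\partial_i-\sum_{k=1}^r\partial_if_k\,\partial_{t_k}\Bigr)\mathcal{D}_{x,t}.$$
Hence there are finitely many $D_i\in\mathcal{D}_{x,t}$ with
$$1+\sum_i\Bigl(\partial_i-\sum_k\partial_if_k\partial_{t_k}\Bigr)D_i\in\mathcal{D}_{x,t}(t_j,\partial_i).$$
The sign is absorbed into the $D_i$.

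\textbf{Step 2: normal form of the $D_i$.} I write each $D_i$ in normal form, with $x$ and $\partial_t$ on the left and $t$ and $\partial_x$ on the right. Every monomial containing some $t_j$ or $\partial_{x_l}$ on the right already lies in the left ideal $\mathcal{D}_{x,t}(t_j,\partial_i)$, and left multiplication by $(\partial_i-\sum\partial_if_k\partial_{t_k})$ keeps it there. Thus we may replace $D_i$ by its reduced part
$$D_i=\sum_\alpha g_{i,\alpha}(x)\,\partial_t^\alpha,$$
a finite sum with polynomial coefficients. This is where finiteness of the solution comes from: only finitely many $\alpha$ occur.

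\textbf{Step 3: extracting the equations.} I expand
$$\partial_i\, g_{i,\alpha}\partial_t^\alpha=(\partial_ig_{i,\alpha})\partial_t^\alpha+g_{i,\alpha}\partial_t^\alpha\partial_i .$$
The last term lies in the left ideal, since $\partial_i$ commutes with $\partial_t^\alpha$ and sits on the right. Also $\partial_if_k\,\partial_{t_k}\,g_{i,\alpha}\partial_t^\alpha=\partial_if_k\,g_{i,\alpha}\partial_t^{\alpha+e_k}$. So, modulo the ideal, the element becomes
$$1+\sum_\alpha\Bigl(\sum_i\partial_ig_{i,\alpha}-\sum_{i,k}\partial_if_k\,g_{i,\alpha-e_k}\Bigr)\partial_t^\alpha .$$
This is reduced of the form treated in Lemma \ref{maximal_ideal} and lies in the ideal, so all of its coefficients vanish. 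The coefficient of $\partial_t^0$ gives $\sum_i\partial_ig_{i,0}=-1$. For $|\alpha|\ge1$ we get exactly the recursive equations in the statement. This is a finite polynomial solution, contradicting the hypothesis, so $b_I(-\frac{n}{d})=0$.

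\textbf{Main obstacle.} The delicate point is Step 2. One must check that the reduction to the form $\sum g\,\partial_t^\alpha$ is legitimate: the left ideal $\mathcal{D}_{x,t}(t_j,\partial_i)$ absorbs exactly the normal-form monomials carrying $t_j$ or $\partial_{x_l}$ on the right, and these terms stay in the ideal after left multiplication. The same care is needed in Step 3, where the commutation of $\partial_i$ past $g_{i,\alpha}$ is handled in the correct order. Lemma \ref{maximal_ideal} then supplies the uniqueness of coefficients.
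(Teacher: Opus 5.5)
Your proposal is correct and follows the paper's argument: the contrapositive through the preceding proposition, transport to $\mathcal{M}'$ via the change of variables $t_j'=t_j-f_j$, reduction of the $D_i$ to $\sum_\alpha g_{i,\alpha}(x)\partial_t^\alpha$, expansion, and coefficient extraction by Lemma~\ref{maximal_ideal}. Your normal-ordering justification in Step~2 supplies a detail the paper only asserts: monomials ending in some $t_j$ or $\partial_{x_l}$ lie in $\mathcal{D}_{x,t}(t_j,\partial_i)$ and stay there under left multiplication.
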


	One can use the Fischer pairing to change this set of equations to the existence of the other one. Firstly we introduce the Fischer pairing.
	
	\begin{definition}
     We define the Fischer pairing as follows.
	\begin{flalign*}
		\langle \cdot, \cdot \rangle: \mathbb{C}[x_{1},...,x_{n}] \times \mathbb{C}[x_{1},...,x_{n}] &\rightarrow \mathbb{C}  \\
		(f(x),g(x)) &\mapsto f(\partial) \cdot g(x)|_{x=0},
	\end{flalign*}
	where $f(\partial)$ denotes the differential operator obtained by replacing every $x_{i}$ with the corresponding $\partial_{i}$. 
	\end{definition}
	Since $\mathbb{C}[x_{1},...,x_{n}]$ is a $\mathbb{C}$-vector space generated by all monomials, it suffices to see how this pairing acts on monomials. By definition, we have
	\begin{flalign*}
		\langle x^{\alpha},x^{\beta} \rangle=\begin{cases}
			\alpha!   \ \mathrm{if} \ \alpha=\beta,   \\
			0   \  \mathrm{if} \ \alpha \neq \beta,
		\end{cases}
	\end{flalign*}
	where $\alpha=(\alpha_{1},...,\alpha_{n}),\beta=(\beta_{1},...,\beta_{n})$ are vectors, $x^{\alpha}=x_{1}^{\alpha_{1}}...x_{n}^{\alpha_{n}}$ and $\alpha !=\alpha_{1}!\alpha_{2}!...\alpha_{n}!$.
	
	We give some properties of the Fischer pairing within the following lemma. They can be proved easily using the characterization of the pairing above.
	
	\begin{lemma}\label{Fischer_pairing}
	The Fischer pairing has the following properties.
	
	$(1)$ This pairing is commutative, $\mathbb{C}$-bilinear and non-degenerate.
	
	$(2)$ The adjoint of some polynomial can be translated into the action of a differential operator, that is, for $f,g,h \in \mathbb{C}[x_{1},...,x_{n}]$, we have $\langle h(x)f(x),g(x) \rangle = \langle f(x), h(\partial) \cdot g(x)\rangle$.
	
	\end{lemma}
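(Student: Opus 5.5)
The statement to prove is Lemma \ref{Fischer_pairing}. Both parts reduce to computations on monomials, using the formula $\langle x^{\alpha},x^{\beta}\rangle=\alpha!\,\delta_{\alpha\beta}$ recorded just above the lemma.

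For (1), bilinearity is immediate from the definition. The map $f\mapsto f(\partial)$ is linear, and $f(\partial)\cdot g$ is linear in $g$; evaluation at $x=0$ is linear as well. For commutativity, it is enough to check on the monomial basis: the formula $\alpha!\,\delta_{\alpha\beta}$ is symmetric in $\alpha$ and $\beta$, so bilinearity extends symmetry to all pairs. For non-degeneracy, the monomials form an orthogonal basis with nonzero diagonal values $\alpha!$. So if $g=\sum_\beta c_\beta x^\beta\neq 0$, choose $\beta$ with $c_\beta\neq 0$; then $\langle x^\beta,g\rangle=\beta!\,c_\beta\neq 0$. By symmetry the same holds in the other slot.

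For (2), the key observation is that $f\mapsto f(\partial)$ is a ring homomorphism from $\mathbb C[x_1,\ldots,x_n]$ to the commutative subalgebra $\mathbb C[\partial_1,\ldots,\partial_n]\subset D_x$. This holds because the $\partial_i$ commute with each other. Hence $(hf)(\partial)=f(\partial)h(\partial)$ as operators, and so
\[
\langle hf,g\rangle=\bigl(f(\partial)h(\partial)\cdot g\bigr)\big|_{x=0}=\bigl(f(\partial)\cdot(h(\partial)\cdot g)\bigr)\big|_{x=0}=\langle f,h(\partial)\cdot g\rangle .
\]
There is essentially no obstacle here. The only point requiring care is that $h(\partial)$ acts on $g$ as a differential operator, not by multiplication, and that the commutativity of the $\partial_i$ justifies factoring $(hf)(\partial)$. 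As an optional check, one can verify the identity on monomials: $\langle x^{\gamma}x^{\alpha},x^{\beta}\rangle$ and $\langle x^\alpha,\partial^\gamma x^\beta\rangle$ both equal $\beta!$ when $\beta=\alpha+\gamma$ and $0$ otherwise, using $\partial^\gamma x^\beta=\frac{\beta!}{(\beta-\gamma)!}x^{\beta-\gamma}$.
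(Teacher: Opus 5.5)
Your proof is correct and takes essentially the same approach as the paper, which simply says both properties follow from the monomial formula $\langle x^{\alpha},x^{\beta}\rangle=\alpha!\,\delta_{\alpha\beta}$. The only difference is cosmetic: for (2) you argue directly from the definition, using $(hf)(\partial)=f(\partial)h(\partial)$ in the commutative algebra $\mathbb{C}[\partial_1,\ldots,\partial_n]$, and keep the monomial computation only as a check, which is if anything a little cleaner.
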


	Now we can prove the following proposition, which gives the dual of the original partial differential equation.
	\begin{proposition}\label{PDE_dual}
	The partial differential equations 
	\begin{flalign*}
		\sum_{i=1}^{n}\partial_{i}g_{i,0,...,0}&=-1,   \\
		\sum_{i=1}^{n}\partial_{i}g_{i,\alpha_{1},...,\alpha_{r}}&=\sum_{i=1}^{n}\sum_{k=1}^{r}\partial_{i}f_{k} \cdot g_{i,\alpha_{1},...,\alpha_{k}-1,...,\alpha_{r}}(\alpha_{1}+...+\alpha_{r} \ge 1)
	\end{flalign*}
	do not have finite polynomial solutions if and only if there exist polynomials $P_{\alpha_{1},...,\alpha_{r}}$ indexed by $\alpha_{1},...,\alpha_{r} \in \mathbb{Z}_{\ge 0}$ satisfying the following conditions.
	
	$(1)$ $P_{\alpha_{1},...,\alpha_{r}}$ is homogeneous of degree $d \cdot (\alpha_{1}+...+\alpha_{r})$,
	
	$(2)$ $P_{0,...,0} \neq 0$,
	
	$(3)$ $x_{i}P_{\alpha_{1},...,\alpha_{r}}(x)=\sum_{k=1}^{r}\partial_{i}f_{k}(\partial) \cdot P_{\alpha_{1},...,\alpha_{k}+1,...,\alpha_{r}}(x)$, where $\partial_{i}f_{k}(\partial)(1 \le k \le r)$ denotes the differential operator obtained by replacing every $x_{j}$ with the corresponding $\partial_{j}$. 
	\end{proposition}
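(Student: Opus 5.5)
The plan is to read the PDE system as an inhomogeneous linear system and apply finite-dimensional linear duality (a Fredholm alternative), degree by degree, with the Fischer pairing realising the transpose. Write $L$ for the linear map sending a finitely supported family $(g_{i,\alpha})$ to the family of right-hand-minus-left-hand expressions
\[
(Lg)_\alpha=\sum_{i}\partial_i g_{i,\alpha}-\sum_{i,k}\partial_i f_k\, g_{i,\alpha-e_k}.
\]
Solvability means that $-\delta_{\alpha,0}$, i.e. the constant $-1$ at index $0$, lies in the image of $L$.

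The first step is a grading reduction. Give $g_{i,\alpha}$ the weight $\deg g_{i,\alpha}-d|\alpha|+1$. Then every term of $(Lg)_\alpha$ has weight $\deg-d|\alpha|$, since $\partial_i f_k$ has degree $d-1$. Hence $L$ is homogeneous, and we may assume that the only relevant components are those with $\deg g_{i,\alpha}=d|\alpha|-1$ and $\deg (Lg)_\alpha=d|\alpha|$. Next, truncate at $|\alpha|\le N$. For fixed $N$, the domain $V_N=\bigoplus_{|\alpha|\le N,i}\mathbb C[x]_{d|\alpha|-1}$ and the target $W_N=\bigoplus_{|\alpha|\le N+1}\mathbb C[x]_{d|\alpha|}$ are finite dimensional. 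A finite polynomial solution with $g_\alpha=0$ for $|\alpha|>N$ is exactly a $g\in V_N$ with $L_Ng=-\delta_0$ in $W_N$; the equations for $|\alpha|=N+1$ are included, and those beyond $N+1$ are automatic. By linear algebra, no solution in $V_N$ exists if and only if there is a functional $\phi$ on $W_N$ with $\phi\circ L_N=0$ and $\phi(\delta_0)\ne0$.

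The second step identifies $\phi$ through the Fischer pairing, which is nondegenerate on each graded piece by Lemma \ref{Fischer_pairing}: write $\phi=\sum_\alpha\langle P_\alpha,\cdot\rangle$ with $P_\alpha$ homogeneous of degree $d|\alpha|$. Lemma \ref{Fischer_pairing}(2) gives $\langle P,\partial_i g\rangle=\langle x_iP,g\rangle$ and $\langle P,(\partial_if_k)g\rangle=\langle \partial_if_k(\partial)P,g\rangle$. So $\phi\circ L_N=0$ is equivalent to
\[
x_iP_\alpha=\sum_k \partial_if_k(\partial)P_{\alpha+e_k}\quad\text{for all } |\alpha|\le N,
\]
and $\phi(\delta_0)\ne0$ is equivalent to $P_0\neq0$. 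Thus nonsolvability at level $N$ is equivalent to a truncated family $(P_\alpha)_{|\alpha|\le N+1}$ satisfying (1), (2), (3) for $|\alpha|\le N$.

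The third step passes to the limit in $N$, which is the main obstacle. The direction ``infinite family $\Rightarrow$ no finite solution'' is immediate, since any infinite family restricts to each level. For the converse, let $S_N$ be the set of truncated families at level $N$, and normalise the constraint to $P_0=c$ for a fixed nonzero $c$. The restriction maps $S_{N+1}\to S_N$ send affine subspaces to affine subspaces. Fix a level $M$; the images of $S_N$ in the finite-dimensional space of level-$M$ data form a decreasing chain of nonempty affine subspaces as $N$ grows, so they stabilise by dimension. Call the stable images $T_M$. Each $T_M$ is nonempty, and restriction maps $T_{M+1}$ onto $T_M$: given $x\in T_M$, it is the image of some element of $S_N$ for large $N$, whose level-$(M+1)$ restriction lies in $T_{M+1}$ once $N$ is beyond both stabilisation points. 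A Mittag-Leffler argument then lets us choose a compatible sequence, which yields the full family $P_\alpha$.

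Care is needed to fix $P_0$ to a common nonzero value across all levels, and to make sure the affine (rather than linear) images are handled correctly. Using the linear space of families with the constraint $P_0\in\mathbb C p$ for a fixed nonzero $p$ works as well. Alternatively one can argue directly with the inverse limit of the nonempty finite-dimensional affine spaces $S_N$, since a nonempty inverse limit of finite-dimensional affine spaces with surjective stabilised transition maps is nonempty.
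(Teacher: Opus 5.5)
Your proof is correct once one degree slip is fixed, and it uses the same Fischer-pairing duality as the paper. It differs from the paper in how the infinite family $(P_\alpha)$ is produced.

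\textbf{The degree slip.} The relevant components are $\deg g_{i,\alpha}=d|\alpha|+1$, not $d|\alpha|-1$, since $\partial_i g_{i,\alpha}$ must have degree $d|\alpha|$. With your weight, $L$ lowers weight by $2$, so matching the weight-$0$ target $-\delta_0$ forces input weight $2$. This is also the degree at which $\langle P_\alpha,\partial_i g_{i,\alpha}\rangle=\langle x_iP_\alpha,g_{i,\alpha}\rangle$ pairs pieces of equal degree. After this correction your three steps are sound. This includes the Mittag-Leffler step: nested nonempty affine images stabilize by dimension, the stable images surject onto each other, and one builds a compatible sequence with $P_0$ normalized.

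\textbf{How the paper differs.} The paper never truncates. It takes $F=\bigoplus_\alpha\mathbb{C}[x]_{d|\alpha|}t^\alpha$ with finite support, and $E$ similarly with degrees $d|\alpha|+1$. It notes $-1\notin\mathrm{Im}(T)$ and picks any linear functional on all of $F$ that kills $\mathrm{Im}(T)$ and is nonzero at $-1$. Because $F$ is a direct sum of finite-dimensional graded pieces, its algebraic dual is the full product $F^*=\prod_\alpha\mathbb{C}[x]_{d|\alpha|}t^\alpha$ via the Fischer pairing. So that single functional already is an infinite family $(P_\alpha)$, and conditions (1)--(3) follow from the same adjunction computation you do. The step you call the main obstacle therefore disappears: the product dual of a direct sum does the compactness work automatically.

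For the ``if'' direction, the paper shows by a telescoping identity that $\sum_{|\alpha|=M}\langle P_\alpha,\Phi_\alpha\rangle$ is independent of $M$ and equals $-P_0\neq 0$. This contradicts finite support, and it is just your $\phi\circ L=0$ written out.

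\textbf{What each route buys.} Your route gives an explicit finite-level equivalence: a solution supported in $|\alpha|\le N$ exists iff there is no truncated certificate $(P_\alpha)_{|\alpha|\le N+1}$, which is decidable by finite linear algebra. It also justifies the grading reduction that the paper uses without comment when it restricts $E$ to degrees $d|\alpha|+1$. The cost is the extra inverse-limit argument. The paper's route is shorter and needs no limiting process at all.
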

	\begin{proof}
	We first prove the "if" part. Let $\Phi_{\alpha_{1},...,\alpha_{r}}=\sum_{i=1}^{n}\partial_{i}g_{i,\alpha_{1},...,\alpha_{r}}$. Then we have $\Phi_{0,..,0}=-1$ and $\Phi_{\alpha_{1},...,\alpha_{r}}=\sum_{i=1}^{n}\sum_{k=1}^{r}\partial_{i}f_{k} \cdot g_{i,\alpha_{1},...,\alpha_{k}-1,...,\alpha_{r}}$. We will prove $\sum_{\alpha_{1}+...+\alpha_{r}=M}\langle P_{\alpha_{1},...,\alpha_{r}}(x),\Phi_{\alpha_{1},...,\alpha_{r}}(x) \rangle$ is non-zero for every $M \ge 0$ by induction on $M$. When $M=0$, this forces $\alpha_{1}=...=\alpha_{r}=0$ and the conclusion follows from the definition. Assume the result holds for $M$.
	By Lemma \ref{Fischer_pairing}, we can calculate as follows.
	\begin{flalign*}
		\langle P_{\alpha_{1},...,\alpha_{r}}(x),\Phi_{\alpha_{1},...,\alpha_{r}}(x) \rangle=&\langle P_{\alpha_{1},...,\alpha_{r}}(x), \sum_{i=1}^{n}\partial_{i}g_{i,\alpha_{1},...,\alpha_{r}}\rangle   \\
		=&\sum_{i=1}^{n}\langle x_{i}P_{\alpha_{1},...,\alpha_{r}}(x), g_{i,\alpha_{1},...,\alpha_{r}} \rangle \\
		=&\sum_{i=1}^{n}\langle \sum_{k=1}^{r}\partial_{i}f_{k}(\partial) \cdot P_{\alpha_{1},...,\alpha_{k}+1,...,\alpha_{r}}(x), g_{i,\alpha_{1},...,\alpha_{r}} \rangle  \\
		=&\sum_{k=1}^{r}\langle P_{\alpha_{1},...,\alpha_{k}+1,...,\alpha_{r}}(x),\sum_{i=1}^{n}\partial_{i}f_{k}(x)g_{i,\alpha_{1},...,\alpha_{r}} \rangle.
	\end{flalign*}
	We have
	\begin{flalign*}
		&\sum_{\alpha_{1}+...+\alpha_{r}=M}\langle P_{\alpha_{1},...,\alpha_{r}}(x),\Phi_{\alpha_{1},...,\alpha_{r}}(x) \rangle   \\
		=&\sum_{\alpha_{1}+...+\alpha_{r}=M}\sum_{k=1}^{r}\langle P_{\alpha_{1},...,\alpha_{k}+1,...,\alpha_{r}}(x),\sum_{i=1}^{n}\partial_{i}f_{k}(x)g_{i,\alpha_{1},...,\alpha_{r}} \rangle  \\
		=&\sum_{\alpha_{1}+...+\alpha_{r}=M+1}\langle P_{\alpha_{1},...,\alpha_{r}}(x), \sum_{k=1}^{r}\sum_{i=1}^{n}\partial_{i}f_{k}(x)g_{i,\alpha_{1},...,\alpha_{k}-1,...,\alpha_{r}} \rangle   \\
		=&\sum_{\alpha_{1}+...+\alpha_{r}=M+1}\langle P_{\alpha_{1},...,\alpha_{r}}(x),\Phi_{\alpha_{1},...,\alpha_{r}} \rangle.
	\end{flalign*}
	Thus $\sum_{\alpha_{1}+...+\alpha_{r}=M}\langle P_{\alpha_{1},...,\alpha_{r}}(x),\Phi_{\alpha_{1},...,\alpha_{r}}(x) \rangle$ is non-zero for every $M$. If the original partial differential equations regarding $g_{i,\alpha_{1},...,\alpha_{r}}$ have finite polynomial solutions, then there exists $N$ such that $g_{i,\alpha_{1},...,\alpha_{r}}=0$ whenever $\alpha_{1}+...+\alpha_{r} \ge N$. Now we take $M>N$ and $\sum_{\alpha_{1}+...+\alpha_{r}=M}\langle P_{\alpha_{1},...,\alpha_{r}}(x),\Phi_{\alpha_{1},...,\alpha_{r}}(x) \rangle$ must be zero, a contradiction, so the "if" part follows.

	Next we prove the "only if" part. To cut out polynomials in different degrees, we need to define an auxiliary function. Let $G_{i}=\sum_{\alpha_{1},...,\alpha_{r}}g_{i,\alpha_{1},...,\alpha_{r}}(x)t_{1}^{\alpha_{1}}...t_{r}^{\alpha_{r}}$ and $D_{i}=\partial_{i}-\sum_{k=1}^{r}t_{k}\partial_{i}f_{k}(x)$, then the original system of equations is equivalent to $\sum_{i=1}^{n}D_{i} \cdot G_{i}=-1$. It has finite polynomial solutions if and only if $G_{i}$ are polynomials. Let $\mathbb{C}[x_{1},...,x_{n}]_{l}$ denote the homogeneous polynomials of degree $l$. We consider the following map.
	\begin{flalign*}
	T:E:=(\bigoplus_{\alpha_{1},...,\alpha_{r}}\mathbb{C}[x_{1},...,x_{n}]_{d|\alpha|+1}t_{1}^{\alpha_{1}}...t_{r}^{\alpha_{r}})^{\oplus n} &\rightarrow F:=\bigoplus_{\alpha_{1},...,\alpha_{r}}\mathbb{C}[x_{1},...,x_{n}]_{d|\alpha|}t_{1}^{\alpha_{1}}...t_{r}^{\alpha_{r}}  \\
	(G_{1},...,G_{n}) &\mapsto \sum_{i=1}^{n}D_{i} \cdot G_{i}.	
	\end{flalign*}
	
	In $E,F$ we require that the elements have finite support in the big direct sum, that is, their elements are polynomials in $\mathbb{C}[x_{1},...,x_{n},t_{1},...,t_{r}]$.  Let Im($T$) be the image of the map $T$ and it is a linear subspace of $F$. Since the original equations do not have finite polynomial solutions, we have $-1 \notin \mathrm{Im}(T)$. We consider the Fischer pairing in $F$ with respect to the variables $x_{1},...,x_{n}$. Because the Fischer pairing is non-trivial only in the same degree, we can define $F^{*}:=\prod_{\alpha_{1},...,\alpha_{r}}\mathbb{C}[x_{1},...,x_{n}]_{d|\alpha|}t_{1}^{\alpha_{1}}...t_{r}^{\alpha_{r}}$. In $F^{*}$ we do not require that the elements have finite support. We consider the Fischer pairing between $F$ and $F^{*}$ as follows.
	\begin{flalign*}
		F^{*} \times F \rightarrow& \mathbb{C}  \\
		(\sum_{\alpha_{1},...,\alpha_{r}}f_{\alpha_{1},...,\alpha_{r}}t_{1}^{\alpha_{1}}...t_{r}^{\alpha_{r}},\sum_{\alpha_{1},...,\alpha_{r}}g_{\alpha_{1},...,\alpha_{r}}t_{1}^{\alpha_{1}}...t_{r}^{\alpha_{r}})\mapsto& \sum_{\alpha_{1},...,\alpha_{r}}\langle f_{\alpha_{1},...,\alpha_{r}}, g_{\alpha_{1},...,\alpha_{r}} \rangle.
	\end{flalign*}

	 Since the Fischer pairing is also non-degenerate at every homogeneous level, for any linear map $\phi \in \mathrm{Hom}_{\mathbb{C}}(F,\mathbb{C})$, there exists $P \in F^{*}$ such that $\phi=\langle P,\bullet \rangle$. Because $-1 \notin \mathrm{Im}(T)$, there exists $P \in F^{*}$ such that $\langle P,-1 \rangle \neq 0$ and $\langle P,\mathrm{Im}(T) \rangle=0$. Suppose $P=\sum_{\alpha_{1},...,\alpha_{r}}P_{\alpha_{1},...,\alpha_{r}}t_{1}^{\alpha_{1}}...t_{r}^{\alpha_{r}}$, where $P_{\alpha_{1},...,\alpha_{r}}$ are polynomials of degree $d|\alpha|$, then we have
	\begin{flalign*}
		0 \neq& \langle P,-1 \rangle  \\
		=&-P_{0,...,0}
	\end{flalign*}
	and
	\begin{flalign*}
		0=& \langle P,\sum_{i=1}^{n}D_{i} \cdot G_{i} \rangle \\
		=&\sum_{i=1}^{n} \langle \sum_{\alpha_{1},...,\alpha_{r}}P_{\alpha_{1},...,\alpha_{r}}t_{1}^{\alpha_{1}}...t_{r}^{\alpha_{r}},\partial_{i}g_{i,\alpha_{1},...,\alpha_{r}}t_{1}^{\alpha_{1}}...t_{r}^{\alpha_{r}}-\sum_{k=1}^{r}\partial_{i}f_{k}(x)g_{i,\alpha_{1},...,\alpha_{r}}t_{1}^{\alpha_{1}}...t_{k}^{\alpha_{k}+1}...t_{r}^{\alpha_{r}} \rangle  \\
		=&\sum_{i=1}^{n}\sum_{\alpha_{1},...,\alpha_{r}}\langle P_{\alpha_{1},...,\alpha_{r}},\partial_{i}g_{i,\alpha_{1},...,\alpha_{r}} \rangle-\langle P_{\alpha_{1},...,\alpha_{r}},\sum_{k=1}^{r}\partial_{i}f_{k}(x)g_{i,\alpha_{1},...,\alpha_{k}-1,...,\alpha_{r}} \rangle \\
		=&\sum_{i=1}^{n}\sum_{\alpha_{1},...,\alpha_{r}} \langle x_{i}P_{\alpha_{1},...,\alpha_{r}}-\sum_{k=1}^{r}\partial_{i}f_{k}(\partial) \cdot P_{\alpha_{1},...,\alpha_{k}+1,...,\alpha_{r}}, g_{i,\alpha_{1},...,\alpha_{r}} \rangle.
	\end{flalign*}
	for all possible $g_{i,\alpha_{1},...,\alpha_{r}}$. This tells us $P_{0,...,0} \neq 0$ and $x_{i}P_{\alpha_{1},...,\alpha_{r}}-\sum_{k=1}^{r}\partial_{i}f_{k}(\partial) \cdot P_{\alpha_{1},...,\alpha_{k}+1,...,\alpha_{r}}=0$, so the result follows.
	\end{proof}
	
	Combining Proposition \ref{PDE} and Proposition \ref{PDE_dual}, we have the following theorem.
	\begin{theorem}\label{PDE_theorem}
	For ideal $I=(f_{1},...,f_{r}) \subset \mathbb{C}[x_{1},...,x_{n}]$ with homogeneous generators of degree $d$, suppose that there exist polynomials $P_{\alpha_{1},...,\alpha_{r}}$ indexed by $\alpha_{1},...,\alpha_{r} \in \mathbb{Z}_{\ge 0}$ satisfying the following conditions.
	
	$(1)$ $P_{\alpha_{1},...,\alpha_{r}}$ is homogeneous of degree $d \cdot (\alpha_{1}+...+\alpha_{r})$,
	
	$(2)$ $P_{0,...,0} \neq 0$,
	
	$(3)$ $x_{i}P_{\alpha_{1},...,\alpha_{r}}(x)=\sum_{k=1}^{r}\partial_{i}f_{k}(\partial) \cdot P_{\alpha_{1},...,\alpha_{k}+1,...,\alpha_{r}}(x)$.
	
	Then $b_{I}(-\frac{n}{d})=0$.
	\end{theorem}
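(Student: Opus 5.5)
The theorem follows by chaining the two preceding propositions; the only real content is already in them, so the plan is to assemble the implications carefully and check that the hypotheses match exactly.

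First, I take the family $\{P_{\alpha}\}$ given in the statement. Conditions (1)--(3) are verbatim the conditions (1)--(3) of Proposition \ref{PDE_dual}. The ``if'' direction of that proposition says that the existence of such a family forces the system
\begin{flalign*}
\sum_{i=1}^{n}\partial_{i}g_{i,0,\ldots,0}&=-1,\\
\sum_{i=1}^{n}\partial_{i}g_{i,\alpha}&=\sum_{i=1}^{n}\sum_{k=1}^{r}\partial_{i}f_{k}\cdot g_{i,\alpha-e_k}\quad(|\alpha|\ge 1)
\end{flalign*}
to have no finite polynomial solution. The mechanism is the telescoping identity for $\sum_{|\alpha|=M}\langle P_\alpha,\Phi_\alpha\rangle$: it is constant and nonzero in $M$, yet it must vanish for $M$ large if the $g_{i,\alpha}$ have finite support. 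So this step is a direct citation; only the ``if'' half is needed, and the duality argument in the ``only if'' half plays no role.

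Second, I apply Proposition \ref{PDE}. Its conclusion is that the absence of finite polynomial solutions yields $b_I(-n/d)=0$. Logically, it rests on the following chain. A relation $1=0$ in $\mathcal M'\cong\mathcal M$ could be rewritten, using normal forms in $\partial_{t}$ and Lemma \ref{maximal_ideal}, as exactly such a finite solution. Hence the class of $1$ in $\mathcal M$ is nonzero. The earlier proposition about $\mathcal M$ then gives $b_I(-n/d)=0$, by the weighted Euler argument $ds\equiv\sum x_i\partial_i\equiv -n+\sum\partial_i x_i\equiv -n$ in $\mathcal M$.

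The main point to double-check, rather than a genuine obstacle, is the reduction in the earlier proposition that $P_k$ may be taken homogeneous of degree $-d$ and hence lie in $(\partial_1,\ldots,\partial_n)\mathcal D_{x,t}$. I would verify it by taking the degree $-d$ component of the functional equation with respect to the grading $\deg x_i=1$, $\deg\partial_i=-1$, $\deg t_k=d$, $\deg\partial_{t_k}=-d$. This works because the defining ideal is homogeneous and $s_i$, $s_{ij}$ have degree $0$. A negative-degree operator in $\mathcal D_x[s_i,s_{ij}]$ must then contain some $\partial_i$, which can be moved to the left modulo lower terms. With this accepted, as the statement permits, the theorem follows immediately by composing the two implications.
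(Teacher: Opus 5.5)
Your proposal is correct and follows the paper's proof: the paper obtains the theorem by combining the ``if'' direction of Proposition \ref{PDE_dual} (the telescoping Fischer-pairing identity) with Proposition \ref{PDE}, and Proposition \ref{PDE} in turn rests on the class of $1$ being nonzero in $\mathcal M$ together with the weighted Euler relation. Your check of the degree $-d$ reduction also holds, and it is cleanest if you write elements of $\mathcal D_x[s_i,s_{ij}]$ in the ordered basis $\partial^\beta x^\gamma$ (all $\partial$'s on the left): a term of degree $-d<0$ then has $|\beta|\ge d\ge 1$, so no inductive commutation is needed.
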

	
	\begin{remark}
		The set of pde above can be viewed as a generalized Airy system.  
		One can solve this system, in the usual analytic or distributional sense, by a Fourier--Laplace transform. But the situation changes substantially when one asks for polynomial solutions.
	\end{remark}
	
	\begin{remark}\label{Auxiliary_function}
	One can use auxiliary functions to write the condition of Theorem \ref{PDE_theorem} in a simpler form. Concretely, let $\mathcal{P}=\sum_{\alpha_{1},...,\alpha_{r}}P_{\alpha_{1},...,\alpha_{r}}\frac{t_{1}^{\alpha_{1}}...t_{r}^{\alpha_{r}}}{\alpha_{1}!...\alpha_{r}!}$ be a power series in $t_{1},...,t_{r}$ with coefficient in $\mathbb{C}[x_{1},...,x_{n}]$, then the condition $(3)$ in \ref{PDE_theorem} is equivalent to
	\begin{align*}
		x_{i}\mathcal{P}=\sum_{k=1}^{r}\partial_{i}f_{k}(\partial) \cdot \partial_{t_{k}}\mathcal{P}
	\end{align*}
	for any $1 \le i \le n$. 
	\end{remark}
	
	\section{Applications and Stability Properties}\label{sec4}
	In this section, we give some applications of Theorem \ref{PDE_theorem}. Before that, we give a definition characterizing the condition in Theorem \ref{PDE_theorem}.
	
	\begin{definition}
	Suppose $f_{1},...,f_{r} \in \mathbb{C}[x_{1},...,x_{n}]$ are polynomials of degree $d$. We say these polynomials have the polynomial PDE property if they satisfy the condition in Theorem \ref{PDE_theorem}, that is, there exist polynomials $P_{\alpha_{1},...,\alpha_{r}}$ indexed by $\alpha_{1},...,\alpha_{r} \in \mathbb{Z}_{\ge 0}$ satisfying the following conditions.
	
	$(1)$ $P_{\alpha_{1},...,\alpha_{r}}$ is homogeneous of degree $d \cdot (\alpha_{1}+...+\alpha_{r})$,
	
	$(2)$ $P_{0,...,0} \neq 0$,
	
	$(3)$ $x_{i}P_{\alpha_{1},...,\alpha_{r}}(x)=\sum_{k=1}^{r}\partial_{i}f_{k}(\partial) \cdot P_{\alpha_{1},...,\alpha_{k}+1,...,\alpha_{r}}(x)$.
	\end{definition}
	
	In the following subsections, we will give examples which satisfy the polynomial PDE property.

	\subsection{Isolated singularity case}
	A simple case is $r=1$ and $f_{1}$ has an isolated singularity at $0$. We have the following proposition.
	\begin{proposition}
		Suppose $r=1$, $f_{1}=f$ has an isolated singularity at $0$. Then $f$ has the polynomial PDE property.
	\end{proposition}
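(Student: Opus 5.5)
I would construct the family $\{P_a\}_{a\ge 0}$ directly through the Fischer pairing, by building compatible linear functionals degree by degree. Write $f_i:=\partial_i f$; these are homogeneous of degree $d-1$. Assume $d\ge 2$, since for $d=1$ the origin is a smooth point.

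By Lemma \ref{Fischer_pairing}, and since the pairing is nondegenerate on each homogeneous piece, giving a homogeneous $P_a$ of degree $da$ is the same as giving a linear functional $L_a=\langle P_a,\bullet\rangle$ on $\mathbb C[x]_{da}$. Condition $(3)$ then says, after pairing both sides with an arbitrary $g$ of degree $da+1$, that
\begin{equation*}
L_{a+1}(f_i\,g)=L_a(\partial_i g)\qquad\text{for all } i \text{ and all } g\in\mathbb C[x]_{da+1}.
\end{equation*}
Here I use $\langle x_iP_a,g\rangle=\langle P_a,\partial_i g\rangle$ and $\langle f_i(\partial)P_{a+1},g\rangle=\langle P_{a+1},f_ig\rangle$. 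So the plan is as follows. Put $L_0(1)=1$, which gives $P_0\neq 0$. Inductively, define $L_{a+1}$ on the degree-$d(a+1)$ part of the Jacobian ideal $J=(f_1,\dots,f_n)$ by
\begin{equation*}
L_{a+1}\Bigl(\sum_i f_i g_i\Bigr):=\sum_i L_a(\partial_i g_i),
\end{equation*}
and extend it arbitrarily to a complement of $J$ in that degree. Since $J$ has finite colength, $J$ contains every homogeneous piece of large degree, so in high degree nothing is left to choose. A free choice on the complement is harmless in any case.

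The main obstacle is well-definedness: whenever $\sum_i f_ig_i=0$ with $\deg g_i=da+1$, we need $\sum_i L_a(\partial_i g_i)=0$. This is where the isolated singularity enters. Because $0$ is an isolated singular point of the homogeneous $f$, the partials $f_1,\dots,f_n$ form a regular sequence. Hence the syzygies are Koszul: $g_i=\sum_j h_{ij}f_j$ with $h_{ij}=-h_{ji}$ homogeneous of degree $da+1-(d-1)$. Then
\begin{equation*}
\sum_i\partial_i g_i=\sum_j f_j\Bigl(\sum_i\partial_i h_{ij}\Bigr)+\sum_{i,j}h_{ij}\,\partial_i f_j .
\end{equation*}
The second sum vanishes, because $h_{ij}$ is antisymmetric and the Hessian $\partial_i f_j=\partial_i\partial_j f$ is symmetric. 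So $\sum_i\partial_i g_i\in J$ in degree $da$, and the defining formula for $L_a$ (available when $a\ge1$) gives
\begin{equation*}
L_a\Bigl(\sum_i\partial_i g_i\Bigr)=L_{a-1}\Bigl(\sum_{i,j}\partial_j\partial_i h_{ij}\Bigr)=0,
\end{equation*}
again by antisymmetry. Here I must check that evaluating $L_a$ on this element uses only its already-established well-definedness, which is exactly the inductive hypothesis.

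The base case $a=0$ needs separate care. There $\deg h_{ij}=2-d$. If $d>2$ the $h_{ij}$ vanish, so the $g_i$ vanish. If $d=2$ the $h_{ij}$ are constants, so $\sum_i\partial_i g_i=\sum_{i,j}h_{ij}\partial_if_j=0$ by the same symmetry argument. In both cases $L_1$ is well defined.

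Once all $L_a$ exist, take $P_a$ to be the Fischer representative of $L_a$. By construction the family satisfies $(1)$–$(3)$, so $f$ has the polynomial PDE property.
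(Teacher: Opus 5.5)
Your proof is correct and is essentially the paper's argument written in dual form. The paper builds $Q_{K+1}$ by showing that $(x_iQ_K)_i$ is Fischer-orthogonal to every syzygy $h_i=\sum_j b_{ij}\partial_jf$ with $b_{ij}=-b_{ji}$, using Koszul exactness for the regular sequence $\partial_1f,\dots,\partial_nf$ and the symmetry of $\partial_jf(\partial)(x_iQ_K)=x_ix_jQ_{K-1}+(\partial_i\partial_jf)(\partial)Q_K$; this is exactly the adjoint of your well-definedness check for $L_{a+1}$ on the Jacobian ideal, and your separate base case corresponds to the paper's convention $Q_{-1}=0$.
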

	\begin{proof}
		We will denote the required polynomials by $Q_{N}(N \in \mathbb{Z}_{\ge 0})$. We construct these $Q_{N}$ by induction. For $N=0$, we set $Q_{0}=1$. Set $Q_{-1}=0$. Since $d\ge2$ and $Q_0=1$, we have $(\partial_jf)(\partial)Q_0=0=x_jQ_{-1}$ for every $j$. Hence the identity $(\partial_jf)(\partial)Q_K=x_jQ_{K-1}$ also holds for $K=0$. Now suppose we have $Q_{0},...,Q_{K}$ satisfying the conditions in Theorem \ref{PDE_theorem}. We construct $Q_{K+1}$ such that $Q_{K+1}$ is homogeneous of degree $d(K+1)$ and $x_{i}Q_{K}=\partial_{i}f(\partial) \cdot Q_{K+1}(x)$. We consider the following linear map.
		\begin{flalign*}
			T:	\mathbb{C}[x_{1},...,x_{n}]_{d(K+1)} &\rightarrow (\mathbb{C}[x_{1},...,x_{n}]_{dK+1})^{\oplus n}  \\
			g  &\mapsto  (\partial_{1}f(\partial) \cdot g,...,\partial_{n}f(\partial) \cdot g).
		\end{flalign*}		
		It suffices to show that $(x_{1}Q_{K},...,x_{n}Q_{K}) \in \mathrm{Im}(T)$. Since the Fischer pairing is non-degenerate, we only need to prove that for any $(h_{1},...,h_{n}) \in (\mathbb{C}[x_{1},...,x_{n}]_{dK+1})^{\oplus n}$ satisfying $\sum_{i=1}^{n}\langle h_{i},\partial_{i}f(\partial) \cdot g \rangle=0$ for all $g$, we have $\sum_{i=1}^{n} \langle h_{i},x_{i}Q_{K} \rangle=0$. Note that $\sum_{i=1}^{n}\langle h_{i},\partial_{i}f(\partial) \cdot g \rangle=\sum_{i=1}^{n}\langle \partial_{i}f(x)h_{i}, g \rangle$, so $\sum_{i=1}^{n}\partial_{i}f(x)h_{i}=0$. Because the polynomials $\partial_{1}f(x),...,\partial_{n}f(x)$ form a regular sequence, their Koszul complex is exact, so there exist polynomials $b_{ij}$ such that $b_{ij}=-b_{ji}$ and $h_{i}=\sum_{j=1}^{n}b_{ij}\partial_{j}f$. Now we can calculate as follows.
		\begin{flalign*}
			&\sum_{i=1}^{n} \langle h_{i},x_{i}Q_{K} \rangle  \\
			=&\sum_{i=1}^{n} \langle \sum_{j=1}^{n}b_{ij}\partial_{j}f,x_{i}Q_{K} \rangle  \\
			=&\sum_{i=1}^{n}\sum_{j=1}^{n} \langle b_{ij},\partial_{j}f(\partial) \cdot (x_{i}Q_{K}) \rangle.
		\end{flalign*}
		This tells us that we only need to prove $\partial_{j}f(\partial) \cdot (x_{i}Q_{K})=\partial_{i}f(\partial) \cdot (x_{j}Q_{K})$. In fact, we have
		\begin{flalign*}
			&\partial_{j}f(\partial) \cdot (x_{i}Q_{K}) \\
			=&x_{i}\partial_{j}f(\partial) \cdot Q_{K}+\partial_{i}\partial_{j}f(\partial)Q_{K} \\
			=&x_{i}x_{j}Q_{K-1}+\partial_{i}\partial_{j}f(\partial)Q_{K}.
		\end{flalign*}
		Similarly $\partial_{i}f(\partial) \cdot (x_{j}Q_{K})$ also equals $x_{i}x_{j}Q_{K-1}+\partial_{i}\partial_{j}f(\partial)Q_{K}$. Thus we have $\partial_{j}f(\partial) \cdot (x_{i}Q_{K})=\partial_{i}f(\partial) \cdot (x_{j}Q_{K})$, and the proposition follows.
	\end{proof}

	\subsection{Maximal-Minor Determinantal Ideals}
	Let $M$ be an $m \times n(m \le n)$ matrix with variables $(x_{ij})_{m \times n}$ and $S=\mathbb{C}[x_{ij}]_{m \times n}$ be its coordinate ring. Let $I_{r}$ be the ideal generated by $r \times r$ minors of the matrix and $Z_{r} \subset \mathbb{C}^{mn}$ be the sub-scheme defined by $I_{r}$.

	\begin{remark}\label{SMC_for_det}
	If one can prove the polynomial PDE property for general $I_{r}$, then the strong monodromy conjecture for $I_{r}$ holds. The reason is the following. 	To distinguish different embedded spaces, we denote the sub-variety defined by $u \times u$ minors in the space of $v \times w$ and the corresponding ideal by $Z_{u}^{v,w}$ and $I_{u}^{v,w}$ for positive integers $u \le v \le w$. Let $U_{ij}=\{x_{ij} \neq 0\} \subset \mathbb{C}^{mn}$ for every $i,j$. Using the same argument as in Lemma $2.3$ in \cite{Bernstein_for_maximal_minors}, one can prove the following isomorphism.
	\begin{flalign*}
		U_{ij} \cap Z_{r}^{m,n} \cong \mathbb{C}^{*} \times \mathbb{C}^{m-1} \times \mathbb{C}^{n-1} \times Z_{r-1}^{m-1,n-1}(r \ge 2).
	\end{flalign*}
	Since $\mathbb{C}^{*} \times \mathbb{C}^{m-1} \times \mathbb{C}^{n-1}$ is a smooth affine variety, by Proposition \ref{multiplying_smooth_variety} we have $b_{Z_{r-1}^{m-1,n-1}}(s)=b_{U_{ij}\cap Z_r^{m,n}}(s)$. By Proposition \ref{open_subset_b_function}, $b_{Z_{r-1}^{m-1,n-1}}|b_{Z_{r}^{m,n}}$. Note that using the isomorphism above we can get $\mathrm{codim}_{\mathbb{C}^{mn}}(Z_{r}^{m,n})=\mathrm{codim}_{\mathbb{C}^{(m-1)(n-1)}}(Z_{r-1}^{m-1,n-1})$, so $b_{I_{r-1}^{m-1,n-1}}|b_{I_{r}^{m,n}}$. Similarly, we have $b_{I_{r-k}^{m-k,n-k}}|b_{I_{r-k+1}^{m-k+1,n-k+1}}$ for every integer $1 \le k \le r-1$.
	
	If we can prove $b_{I_{r}^{m,n}}(-\frac{mn}{r})=0$, then by the same method we have $b_{I_{r-k}^{m-k,n-k}}(-\frac{(m-k)(n-k)}{r-k})=0$ for $0 \le k \le r-1$. By the division relations of these Bernstein-Sato polynomials above we know $-\frac{(m-k)(n-k)}{r-k}$ is a root for $0 \le k \le r-1$. 
	
	On the other hand, by \cite{CZ25}, the poles of the motivic zeta functions are exactly $-\frac{(m-k)(n-k)}{r-k}$ for $0 \le k \le r-1$, so the strong monodromy conjecture for $I_{r}$ holds.
	\end{remark}

	In this subsection, we verify the polynomial PDE property for the case when $r=m$, that is, the case of maximal minors. We first fix our notations. For any set $A \subset \{1,...,m\}$ and any set $B \subset \{1,...,n\}$ satisfying $|A|=|B|=r$, we denote the minor generated by the rows and columns indexed by $A$ and $B$ respectively by $f_{A,B}$. In the case of maximal minors, $A$ has to be $\{1,...,m\}$, so we will simply denote $f_{A,B}$ by $f_{B}$. Similarly, for the indexes $\alpha$, we will denote the index corresponding to $f_{B}$ by $\alpha_{B}$ and let $\vec{\alpha}=(\alpha_{B})_{B \subset \{1,...,n\}}$. Now we take the required polynomials in Theorem \ref{PDE_theorem} as $P_{\vec{\alpha}}=c_{\alpha}\prod_{B \subset \{1,...,n\}}f_{B}^{\alpha_{B}}$, where $c_{\alpha} \in \mathbb{C}$ is non-zero. It suffices to prove the following proposition to use our theorem.
	
	\begin{proposition}\label{identity_for_minors}
	In the notation above, we have $$\prod_{k=1}^{m-1}(|\vec{\alpha}|+n-k)x_{ij}\prod_{B \subset \{1,...,n\}}f_{B}^{\alpha_{B}}=\sum_{A \subset \{1,...,n\}}\partial_{ij}f_{A}(\partial) \cdot f_{A}\prod_{B \subset \{1,...,n\}}f_{B}^{\alpha_{B}}.$$ 
	\end{proposition}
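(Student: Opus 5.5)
The plan is to prove the identity by a representation-theoretic reduction to a single scalar, and then to compute that scalar with the classical Cayley identity. For $1\le i\le m$, $1\le j\le n$, put
$$D_{ij}:=\sum_{A}(\partial_{ij}f_A)(\partial)\circ f_A .$$
Here $f_A$ acts by multiplication, and $\partial_{ij}f_A$ is, up to sign, the $(m-1)\times(m-1)$ minor of $M$ on rows $\{1,\ldots,m\}\setminus\{i\}$ and columns $A\setminus\{j\}$; it vanishes when $j\notin A$. Let $R_a\subset S$ be the span of the products $F=\prod_B f_B^{\alpha_B}$ with $|\vec\alpha|=a$. The claim is then
$$D_{ij}F=c_a\,x_{ij}F\quad\text{for all }F\in R_a,\qquad c_a=\prod_{k=1}^{m-1}(a+n-k).$$

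\emph{Step 1 (equivariance).} Let $G=GL_m\times GL_n$ act on $S$ by $(g,h)\cdot x=g\,x\,h^{-1}$, and dually on the $\partial_{ij}$. The tuple $(f_A)_A$ spans the representation $\det(g)\otimes\Lambda^m$. The tuple of cofactor operators $((\partial_{ij}f_A)(\partial))$ transforms by the contragredient of that representation, tensored with the representation of $(\partial_{ij})$. Contracting over $A$, I expect to check that the operator-valued matrix $(D_{ij})$ transforms exactly like the matrix $(x_{ij})$, with the determinant characters cancelling. Consequently the map
$$\Phi:\ R_a\otimes(\mathbb C^m\otimes\mathbb C^n)^{*}\to S_{am+1},\qquad F\otimes e_{ij}^{*}\mapsto D_{ij}F,$$
is $G$-equivariant, and so is $\Psi:F\otimes e_{ij}^*\mapsto x_{ij}F$.

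\emph{Step 2 (multiplicity one).} By the Cauchy decomposition, $S_{am+1}=\bigoplus_{\lambda}S_\lambda(\mathbb C^m)\otimes S_\lambda(\mathbb C^n)$. The space $R_a$ is the $SL_m$-invariant part of $S_{am}$, namely $\det^a\otimes S_{(a^m)}(\mathbb C^n)$. The images of $\Phi$ and $\Psi$ are therefore $SL_m$-isotypic of standard type, which forces $\lambda=(a+1,a,\ldots,a)$; this component is irreducible. By Pieri's rule, $S_{(a+1,a^{m-1})}(\mathbb C^n)$ occurs with multiplicity one in $S_{(a^m)}(\mathbb C^n)\otimes\mathbb C^n$. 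Schur's lemma then gives $\Phi=c_a\Psi$ for a scalar $c_a$ depending only on $a$.

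\emph{Step 3 (the constant).} I will evaluate on $F=f_{[m]}^{a}$, the highest weight vector with $\vec\alpha$ supported on $B=\{1,\ldots,m\}$. Rather than one fixed $(i,j)$, I will take the trace $\sum_{i}\sum_{j\le m}x_{ij}D_{ij}$ applied to $f_{[m]}^a$, so that the left block $X_0=(x_{ij})_{i,j\le m}$ is isolated.
\begin{itemize}
\item For $A=[m]$, the operator $\sum_{i,j\le m}x_{ij}(\partial_{ij}f_{[m]})(\partial)$ is $\det(X_0)$ times an $(m-1)$-st order Cayley-type operator. The Cayley identity on the block, together with Laplace expansion, gives $f_{[m]}^{a+1}\mapsto$ an explicit product times $f_{[m]}^{a}$ in the $m\times m$ variables.
\item For $A\neq[m]$, the terms must be controlled separately. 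I would instead choose $(i,j)$ and a monomial coefficient, such as the coefficient of $x_{11}\cdot\mathrm{diag}^{a}$, for which these terms can be tracked, and read off $c_a$.
\end{itemize}
The target value is $\prod_{k=1}^{m-1}(a+n-k)$. The $n$-dependence comes exactly from the contributions of the columns $j>m$, through the standard rectangular Cayley identity (Caracciolo–Sokal–Sportiello), $\sum_A f_A(\partial)\,f_A\cdot\det(\cdot)^{a}$. So an alternative for Step 3 is to invoke the rectangular Cayley identity directly and differentiate it in $x_{ij}$.

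The main obstacle is Step 3, the exact computation of $c_a$ with the shift $n-k$ rather than $m-k$. If the direct coefficient extraction becomes messy, I would prove the identity first for $a=0$, where it is a Laplace-expansion computation of $\sum_A(\partial_{ij}f_A)(\partial)f_A$ giving $(n-1)\cdots(n-m+1)\,x_{ij}$. I would then obtain the recursion $c_{a+1}/c_a=\prod_k(a+n-k+1)/(a+n-k)$ from a commutator of $D_{ij}$ with multiplication by $f_{[m]}$.
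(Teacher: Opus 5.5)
Your Steps 1 and 2 are correct, and they are a genuinely different reduction from the paper's. Since $D_{ij}=[C,x_{ij}]$ with $C=\sum_A f_A(\partial)f_A$ invariant under $GL_m\times GL_n$, the $D_{ij}$ transform like the $x_{ij}$. Cauchy and Pieri then make the relevant Hom space one-dimensional, so $D_{ij}F=c_a x_{ij}F$ on $R_a$. But this only proves the identity up to an unknown scalar. The content of the proposition is the value $c_a=\prod_{k=1}^{m-1}(a+n-k)$, and \textbf{Step 3 never computes it}; this is a genuine gap.

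None of the routes you list in Step 3 closes it:
\begin{itemize}
\item \emph{The trace.} Your trace equals $c_a\bigl(\sum_{i,j\le m}x_{ij}^2\bigr)f_{[m]}^a$, so it is not a scalar multiple of $f_{[m]}^a$. Moreover, the $A=[m]$ term is just the square ($n=m$) Cayley-type identity $(\partial_{ij}f_{[m]})(\partial)f_{[m]}^{a+1}=\prod_{k=1}^{m-1}(a+m-k)\,x_{ij}f_{[m]}^a$. So the entire shift from $m-k$ to $n-k$ lives in the $A\neq[m]$ terms, which you explicitly leave untreated.
\item \emph{Differentiating a Cayley identity.} Differentiating $CF=\mu_aF$ in $x_{ij}$ produces $[\partial_{ij},C]=\sum_A f_A(\partial)(\partial_{ij}f_A)$, which is not $D_{ij}$. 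Writing $D_{ij}F=C(x_{ij}F)-x_{ij}CF$ shows what is really needed. The eigenvalue $\mu_a$ controls only the second term, while $x_{ij}F$ lies in the $\lambda=(a+1,a^{m-1})$ component of $S_{am+1}$, where you have no eigenvalue.
\item \emph{The fallback recursion.} The recursion via $[D_{ij},f_{[m]}]$ only records the ratio of the target values; no argument is given that produces it.
\end{itemize}

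The missing ingredient is an actual eigenvalue computation. The paper gets it from the Capelli identity $C=\mathrm{cdet}\bigl(E+\mathrm{diag}(n,n-1,\ldots,n-m+1)\bigr)$, where $E_{ab}=\sum_q x_{aq}\partial_{bq}$. Because $E_{ab}F=\delta_{ab}\,aF$ for every $F\in R_a$, only $\sigma=\mathrm{id}$ contributes, so $CF=\prod_{k=1}^m(a+n-k+1)F$. The same triangularity gives $C(x_{1j}F)=(a+n+1)\prod_{k=2}^m(a+n-k+1)\,x_{1j}F$. Subtracting yields $c_a$ for all $F\in R_a$ at once, with no Schur-lemma step.

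Within your framework you can avoid the second eigenvalue:
\begin{itemize}
\item The Euler identity for $f_A$ gives $\sum_{i,j}\partial_{ij}\circ(\partial_{ij}f_A)(\partial)=m\,f_A(\partial)$, hence $\sum_{i,j}\partial_{ij}D_{ij}=mC$.
\item Also $\sum_{i,j}\partial_{ij}x_{ij}=mn+\sum_{i,j}x_{ij}\partial_{ij}$.
\item Applying $\partial_{ij}$ to $D_{ij}F=c_a x_{ij}F$ and summing gives $mCF=c_a\,m(n+a)F$, so $c_a=\mu_a/(n+a)$.
\end{itemize}
You still have to supply $\mu_a=\prod_{k=1}^m(a+n-k+1)$, for example from the Capelli identity. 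With that input your route is complete. It explains conceptually why a single scalar appears, whereas the paper's argument computes the scalar directly without representation theory.
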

	\begin{proof}
	Let $F_{\vec{\alpha}}=\prod_{B \subset \{1,...,n\}}f_{B}^{\alpha_{B}}$, and let $C=\sum_{A \subset \{1,...,n\}}f_{A}(\partial)f_{A}(x)$ be a differential operator. For any $x_{ij}$, we have
	\begin{flalign*}
		[C,x_{ij}]&=Cx_{ij}-x_{ij}C \\
		&=\sum_{A \subset \{1,...,n\}}f_{A}(\partial)f_{A}(x)x_{ij}-x_{ij}f_{A}(\partial)f_{A}(x)  \\
		&=\sum_{A \subset \{1,...,n\}}(f_{A}(\partial)x_{ij}-x_{ij}f_{A}(\partial))f_{A}(x) \\
		&=\sum_{A \subset \{1,...,n\}}\partial_{ij}f_{A}(\partial)f_{A}(x). 
	\end{flalign*}
	This implies that the right hand side can be written as
	\begin{align}\label{equation1}
		&\sum_{A \subset \{1,...,n\}}\partial_{ij}f_{A}(\partial) f_{A}\prod_{B \subset \{1,...,n\}}f_{B}^{\alpha_{B}}   \\
		=&Cx_{ij}F_{\vec{\alpha}}-x_{ij}CF_{\vec{\alpha}}.
	\end{align}
	Next we calculate $CF_{\vec{\alpha}}$ and $Cx_{ij}F_{\vec{\alpha}}$ respectively. To do this, we shall use the following formula by Corollary $1.3$ in \cite{Capelli_formula}.
	\begin{align}\label{Cauchy_Binet}
		C=\mathrm{cdet}(E+\mathrm{diag}(n,n-1,...,n-m+1))_{m \times m},
	\end{align}
	where $E$ is a $m \times m$ matrix with entries $E_{ab}=\sum_{q=1}^{n}x_{aq}\partial_{bq}$ and cdet is the column determinant, that is, for a $m \times m$ matrix $D$ with entries $D_{ij}$, $$\mathrm{cdet}(D)=\sum_{\sigma \in S_{m}}\mathrm{sgn}(\sigma)D_{\sigma(1)1}...D_{\sigma(m)m}.$$
	
	Now we will use \ref{Cauchy_Binet} to calculate $CF_{\vec{\alpha}}$ and $Cx_{ij}F_{\vec{\alpha}}$. According to the definition of column determinant, we have
	\begin{flalign*}
		C=\sum_{\sigma \in S_{m}}\mathrm{sgn}(\sigma)\prod_{k=1}^{m}(E_{\sigma(k)k}+\delta_{\sigma(k)k}(n-k+1)).
	\end{flalign*}
	Note that when $E_{ab}$ acts on $f_{A}$, it replaces its $b$-th row by its $a$-th row. This implies that for any $A \subset \{1,...,n\}$, $E_{ab} \cdot f_{A}=\delta_{ab}f_{A}$, so $E_{ab} \cdot F_{\vec{\alpha}}=\delta_{ab}|\vec{\alpha}|F_{\vec{\alpha}}$. For any permutation $\sigma$, $\prod_{k=1}^{m}(E_{\sigma(k)k}+\delta_{\sigma(k)k}(n-k+1)) \cdot F_{\vec{\alpha}}$ is non-zero if and only if $\sigma=\mathrm{id}$, so
	\begin{align}\label{equation2}
	CF_{\vec{\alpha}}=\prod_{k=1}^{m}(|\vec{\alpha}|+n-k+1)F_{\vec{\alpha}}.
	\end{align} 
	
	Next we calculate $Cx_{ij}F_{\vec{\alpha}}$. As in the case above, we have
	\begin{flalign*}
		&E_{ab}x_{ij}F_{\vec{\alpha}}  \\
		=&\sum_{q=1}^{n}x_{aq}\partial_{bq}x_{ij}F_{\vec{\alpha}}  \\
		=&\sum_{q=1}^{n}x_{aq}(x_{ij}\partial_{bq}+\delta_{bi}\delta_{qj})F_{\vec{\alpha}}  \\
		=&x_{ij}E_{ab}F_{\vec{\alpha}}+\delta_{bi}x_{aj}F_{\vec{\alpha}} \\
		=&|\vec{\alpha}|\delta_{ab}x_{ij}F_{\vec{\alpha}}+\delta_{bi}x_{aj}F_{\vec{\alpha}}.
	\end{flalign*}
	We firstly consider the case when $i=1$, that is, $Cx_{1j}F_{\vec{\alpha}}$. By the computation above, if $a \neq b$ and $b \neq 1$, then $E_{ab}x_{1j}F_{\vec{\alpha}}=0$. Moreover, if $b \neq 1$, $E_{ab}x_{1j}F_{\vec{\alpha}}$ is still a multiple of $x_{1j}F_{\vec{\alpha}}$. This implies that if $\prod_{k=1}^{m}(E_{\sigma(k)k}+\delta_{\sigma(k)k}(n-k+1)) \cdot x_{1j}F_{\vec{\alpha}}$ is non-zero, then $\sigma=\mathrm{id}$. Thus we have
	\begin{flalign*}
		Cx_{1j}F_{\vec{\alpha}}=(|\vec{\alpha}|+n+1)\prod_{k=2}^{m}(|\vec{\alpha}|+n-k+1)x_{1j}F_{\vec{\alpha}}.
	\end{flalign*}
	Since these variables are symmetric, one can prove other cases after a linear coordinate change. Thus we have
	\begin{align}\label{equation3}
		Cx_{ij}F_{\vec{\alpha}}=(|\vec{\alpha}|+n+1)\prod_{k=2}^{m}(|\vec{\alpha}|+n-k+1)x_{ij}F_{\vec{\alpha}}.
	\end{align}
	Combining equations \ref{equation1}, \ref{equation2} and \ref{equation3}, we have
	\begin{flalign*}
		&\sum_{A \subset \{1,...,n\}}\partial_{ij}f_{A}(\partial) f_{A}\prod_{B \subset \{1,...,n\}}f_{B}^{\alpha_{B}}   \\
		=&Cx_{ij}F_{\vec{\alpha}}-x_{ij}CF_{\vec{\alpha}}  \\
		=&(|\vec{\alpha}|+n+1)\prod_{k=2}^{m}(|\vec{\alpha}|+n-k+1)x_{ij}F_{\vec{\alpha}}-x_{ij}\prod_{k=1}^{m}(|\vec{\alpha}|+n-k+1)F_{\vec{\alpha}} \\
		=&\prod_{k=1}^{m-1}(|\vec{\alpha}|+n-k)x_{ij}F_{\vec{\alpha}}.
	\end{flalign*}
	\end{proof}
	
	\begin{remark}
	When $m=n$, we have $r=1$, and the required equation in the proposition above is
	\begin{flalign*}
		\prod_{k=1}^{m-1}(\alpha+m-k)x_{ij}f^{\alpha}=\partial_{ij}f(\partial) \cdot f^{\alpha+1},
	\end{flalign*}
	where $f=f_{1}$ is the determinant of the matrix $(x_{ij})_{m \times m}$. This is actually a variant of the famous Cayley identity.
	\end{remark}

	Now we can prove the $\frac{n}{d}$ property of the ideal of maximal minors.
	\begin{theorem}\label{n/d_of_maximal_minors}
	Suppose $I_{m}$ is the ideal generated by maximal minors. Then $b_{I_{m}}(-\frac{mn}{m})=b_{I_{m}}(-n)=0$.
	\end{theorem}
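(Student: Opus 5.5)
The plan is to apply Theorem \ref{PDE_theorem} to the generators $f_B$ of $I_m$, where $B$ ranges over the $m$-element subsets of $\{1,\ldots,n\}$. These are homogeneous of degree $d=m$ in $mn$ variables, so the theorem gives the root $-mn/m=-n$. It therefore suffices to construct the family $P_{\vec\alpha}$, and we take the ansatz already suggested in the text, $P_{\vec\alpha}=c_{|\vec\alpha|}F_{\vec\alpha}$ with $F_{\vec\alpha}=\prod_B f_B^{\alpha_B}$. Here the constant depends only on $N=|\vec\alpha|$ and is to be chosen nonzero.

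Condition (1) holds automatically, since $F_{\vec\alpha}$ is homogeneous of degree $m|\vec\alpha|$. For condition (2) we set $c_0=1$, so that $P_0=1\neq0$. For condition (3), note that $f_A F_{\vec\alpha}=F_{\vec\alpha+e_A}$. Proposition \ref{identity_for_minors} then gives
\[
\sum_A \partial_{ij}f_A(\partial)\,F_{\vec\alpha+e_A}=\prod_{k=1}^{m-1}(N+n-k)\,x_{ij}F_{\vec\alpha}.
\]
So condition (3), which reads $x_{ij}c_N F_{\vec\alpha}=\sum_A\partial_{ij}f_A(\partial)\,c_{N+1}F_{\vec\alpha+e_A}$, reduces to the scalar recursion
\[
c_N=c_{N+1}\prod_{k=1}^{m-1}(N+n-k).
\]
Since $m\le n$, every factor $N+n-k$ with $1\le k\le m-1$ satisfies $N+n-k\ge N+1\ge1$. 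Hence the product is a positive integer, and we can define $c_{N+1}=c_N/\prod_{k=1}^{m-1}(N+n-k)$ recursively. This makes all $c_N$ nonzero and well defined. If $m=1$, the product is empty, all $c_N=1$ work, and the identity reduces to $\partial_{1j}x_{1j}$ acting trivially as expected.

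The main point requiring care is that the recursion never needs division by zero; this is exactly where $m\le n$ is used. A second point is that the identity in Proposition \ref{identity_for_minors} holds uniformly for every $\vec\alpha$, including $\vec\alpha=0$, where both sides are $\prod_{k=1}^{m-1}(n-k)\,x_{ij}$. With the family in place, Theorem \ref{PDE_theorem} gives $b_{I_m}(-n)=0$.
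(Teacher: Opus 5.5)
Your proposal is correct and follows essentially the paper's proof: the same ansatz $P_{\vec\alpha}=c_{|\vec\alpha|}\prod_B f_B^{\alpha_B}$, with condition (3) reduced to Proposition \ref{identity_for_minors}. Your recursion $c_N=c_{N+1}\prod_{k=1}^{m-1}(N+n-k)$ with $c_0=1$ is solved exactly by the paper's closed form $c_{\vec\alpha}=\prod_{k=1}^{m-1}\frac{(n-k-1)!}{(n-k+|\vec\alpha|-1)!}$.
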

	\begin{proof}
	We take $P_{\vec{\alpha}}=c_{\vec{\alpha}}\prod_{B \subset \{1,...,n\}}f_{B}^{\alpha_{B}}$, where $c_{\vec{\alpha}}=\prod_{k=1}^{m-1}\frac{(n-k-1)!}{(n-k+|\vec{\alpha}|-1)!}$. Let us verify that this choice satisfies the conditions of Theorem \ref{PDE_theorem}. The conditions $(1)$ and $(2)$ are straightforward, and the condition $(3)$ follows from Proposition \ref{identity_for_minors}, so the result holds.
	\end{proof}

	\subsection{Stability under Algebraic Operations}
	In this subsection, we will prove the polynomial PDE property for some $g_{1},...,g_{l}$ when we know the polynomial PDE property for the other set of polynomials $f_{1},...,f_{r}$. The theorem is as follows.
	
	\begin{theorem}\label{linear_span}
	Suppose $g_{1},...,g_{l},f_{1},...,f_{r} \in \mathbb{C}[x_{1},...,x_{n}]$ are homogeneous polynomials of degree $d$ satisfying $\mathrm{Span}_{\mathbb{C}}\{g_{1},...,g_{l}\} \subset \mathrm{Span}_{\mathbb{C}}\{f_{1},...,f_{r}\}$, that is, $g_{1},...,g_{l}$ are linear combinations of $f_{1},...,f_{r}$. If $g_{1},...,g_{l}$ have the polynomial PDE property, then $f_{1},...,f_{r}$ also have the polynomial PDE property.
	\end{theorem}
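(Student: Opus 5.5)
The plan is to transport a certificate for $g_1,\ldots,g_l$ into one for $f_1,\ldots,f_r$ by a linear substitution in the auxiliary variables. I will work with the generating-function form of condition $(3)$ from Remark \ref{Auxiliary_function}. Write $g_a=\sum_{k=1}^r c_{ak}f_k$ with $c_{ak}\in\mathbb C$, for $1\le a\le l$. Let $Q_\gamma$, $\gamma\in\mathbb Z_{\ge0}^l$, be the family witnessing the polynomial PDE property of the $g_a$. Put
$$\mathcal Q(x,u)=\sum_\gamma Q_\gamma\frac{u^\gamma}{\gamma!},$$
a formal power series in new variables $u_1,\ldots,u_l$. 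Then, for every $i$,
$$x_i\mathcal Q=\sum_{a=1}^l \partial_i g_a(\partial)\,\partial_{u_a}\mathcal Q .$$

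The first step is to expand this using linearity of $f\mapsto\partial_i f(\partial)$. Since $\partial_i g_a(\partial)=\sum_k c_{ak}\,\partial_i f_k(\partial)$, the right-hand side becomes
$$\sum_{k=1}^r \partial_i f_k(\partial)\Big(\sum_{a=1}^l c_{ak}\,\partial_{u_a}\Big)\mathcal Q .$$
This suggests defining $\mathcal P(x,t):=\mathcal Q(x,u)|_{u_a=\sum_k c_{ak}t_k}$. By the chain rule,
$$\partial_{t_k}\mathcal P=\Big(\sum_a c_{ak}\,\partial_{u_a}\mathcal Q\Big)\Big|_{u=Ct},$$
where $C=(c_{ak})$. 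Operators acting only in $x$ commute with the substitution $u=Ct$. Hence substituting into the identity above gives
$$x_i\mathcal P=\sum_k \partial_i f_k(\partial)\,\partial_{t_k}\mathcal P,$$
which is exactly condition $(3)$ for the $f_k$ in the form of Remark \ref{Auxiliary_function}.

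It remains to check that the substitution is legitimate and that conditions $(1)$ and $(2)$ hold. The substitution $u=Ct$ is by linear forms with no constant term. So each coefficient of $t^\beta$ in $\mathcal P$ is a finite sum, over $|\gamma|=|\beta|$, of multiples of the $Q_\gamma$. It is therefore a polynomial, homogeneous of degree $d|\beta|$, which gives $(1)$. Define $P_\beta:=\beta!\,[t^\beta]\mathcal P$. The degree-zero coefficient is untouched by the substitution, so $P_0=Q_0\ne0$, which gives $(2)$.

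I expect no serious obstacle. The only points needing care are that formal composition of power series, and the chain rule for it, are valid degree by degree, and that the equivalence in Remark \ref{Auxiliary_function} between the generating-function identity and condition $(3)$ holds coefficientwise. Both are routine, because everything is graded by total $t$-degree and each graded piece is finite-dimensional.
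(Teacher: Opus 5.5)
Your proposal is correct and follows the paper's proof essentially verbatim: the substitution $s_a=\sum_k c_{ak}t_k$ in the generating series of Remark \ref{Auxiliary_function}, the chain rule $\partial_{t_k}=\sum_a c_{ak}\partial_{s_a}$, and linearity of $f\mapsto\partial_i f(\partial)$. Your explicit checks that each coefficient of $t^\beta$ is homogeneous of degree $d|\beta|$ and that $P_0=Q_0\neq 0$ spell out what the paper leaves as easily verified.
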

	\begin{proof}
	We write $g_{a}=\sum_{k=1}^{r}c_{ak}f_{k}$, where $c_{ak} \in \mathbb{C}$. Since $g_{1},...,g_{l}$ have the polynomial PDE property, by Remark \ref{Auxiliary_function}, there exists $\mathcal{Q}(s_{1},...,s_{l})=\sum_{\beta_{1},...,\beta_{l}}Q_{\beta_{1},...,\beta_{l}}\frac{s_{1}^{\beta_{1}}...s_{l}^{\beta_{l}}}{\beta_{1}!...\beta_{l}!}$ satisfying the condition in Theorem \ref{PDE_theorem}, where $Q_{\beta_{1},...,\beta_{l}} \in \mathbb{C}[x_{1},...,x_{n}]$. Since $g_{a}=\sum_{k=1}^{r}c_{ak}f_{k}$, we have $\partial_{i}g_{a}(\partial)=\sum_{k=1}^{r}c_{ak}\partial_{i}f_{k}(\partial)$. We take $s_{a}=\sum_{k=1}^{r}c_{ak}t_{k}$, and we define $\mathcal{P}(t_{1},...,t_{r})=\mathcal{Q}(s_{1},...,s_{l})$. By the chain rule we have $\partial_{t_{k}}=\sum_{a=1}^{l}\partial_{t_{k}}s_{a} \partial_{s_{a}}=\sum_{a=1}^{l}c_{ak}\partial_{s_{a}}$.
	This implies that
	\begin{flalign*}
		x_{i}\mathcal{P}=&x_{i}\mathcal{Q}  \\
		                =&\sum_{a=1}^{l}\partial_{i}g_{a}(\partial) \cdot \partial_{s_{a}}\mathcal{Q}  \\
		                =&\sum_{a=1}^{l}\sum_{k=1}^{r}c_{ak}\partial_{i}f_{k}(\partial)\cdot \partial_{s_{a}}\mathcal{Q} \\
		                =&\sum_{k=1}^{r}\partial_{i}f_{k}(\partial)\sum_{a=1}^{l}c_{ak}\partial_{s_{a}}\mathcal{Q}  \\
		                =&\sum_{k=1}^{r}\partial_{i}f_{k}(\partial)\partial_{t_{k}}\mathcal{P}.
	\end{flalign*}
	The conditions $(1)$ and $(2)$ in Theorem \ref{PDE_theorem} can be easily verified from the definition of $\mathcal{P}$, so the result follows.
	\end{proof}	
	
	\begin{corollary}
	Suppose $f_{1},...,f_{r},f_{r+1},...,f_{r+l}$ are homogeneous polynomials of degree $d$. If $f_{1},...,f_{r}$ have the polynomial PDE property, then $f_{1},...,f_{r+l}$ also have the polynomial PDE property.
	\end{corollary}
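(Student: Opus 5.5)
This is a direct consequence of Theorem \ref{linear_span}. Set $g_a=f_a$ for $1\le a\le r$, so the family $g_1,\dots,g_l$ of that theorem is $f_1,\dots,f_r$ (with $l=r$). The enlarged family $f_1,\dots,f_{r+l}$ plays the role of the larger spanning set. The inclusion $\mathrm{Span}_{\mathbb C}\{f_1,\dots,f_r\}\subset\mathrm{Span}_{\mathbb C}\{f_1,\dots,f_{r+l}\}$ is trivial, and all polynomials are homogeneous of the common degree $d$. Hence Theorem \ref{linear_span} immediately gives the polynomial PDE property for $f_1,\dots,f_{r+l}$.

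The only thing to check is that the indexing matches. Theorem \ref{linear_span} uses $l$ for the number of the $g$'s and $r$ for the number of the $f$'s, while here $r+l$ is the size of the larger family. After relabelling, the coefficient matrix is $c_{ak}=\delta_{ak}$ for $1\le a\le r$ and $1\le k\le r+l$.

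As a sanity check, one can also write the certificate explicitly. Take $\mathcal P(t_1,\dots,t_{r+l})=\mathcal Q(t_1,\dots,t_r)$, which is independent of $t_{r+1},\dots,t_{r+l}$. Equivalently, set $P_{\alpha}=Q_{(\alpha_1,\dots,\alpha_r)}$ when $\alpha_{r+1}=\dots=\alpha_{r+l}=0$, and $P_\alpha=0$ otherwise.

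Condition (3) then splits into two cases.
\begin{itemize}
\item When the last $l$ entries of $\alpha$ vanish, the extra terms involve $P_{\alpha+e_k}$ with $k>r$. These are zero, so condition (3) reduces to the one for $Q$.
\item When some entry $\alpha_k$ with $k>r$ is positive, both sides are zero.
\end{itemize}
Homogeneity and $P_0=Q_0\neq0$ are inherited from $Q$.

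There is no real obstacle here. The only care needed is the bookkeeping of indices in the relabelling above.
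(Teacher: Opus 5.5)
Your proposal is correct and follows the paper's route: the corollary is given as an immediate consequence of Theorem \ref{linear_span}, applied with $g_a=f_a$ for $1\le a\le r$, i.e.\ $c_{ak}=\delta_{ak}$. Your explicit certificate, with $P_\alpha=Q_{(\alpha_1,\dots,\alpha_r)}$ when $\alpha_{r+1}=\cdots=\alpha_{r+l}=0$ and $P_\alpha=0$ otherwise, is exactly what the substitution $\mathcal P(t)=\mathcal Q(t_1,\dots,t_r)$ in that theorem's proof produces. Your two-case verification of condition (3) is also right.
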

	
	\begin{corollary}
	Suppose $f_{1},...,f_{r}$ are homogeneous polynomials of degree $d$. If any of the $f_{i}$ has the polynomial PDE property as a single polynomial, for example, $f_{i}$ has an isolated singularity at $0$, then $f_{1},...,f_{r}$ have the polynomial PDE property.
	\end{corollary}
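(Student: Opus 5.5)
The plan is to deduce the statement directly from Theorem \ref{linear_span} by realising the single polynomial $f_i$ as a one-element family whose linear span sits inside $\mathrm{Span}_{\mathbb C}\{f_1,\ldots,f_r\}$.

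Set $l=1$ and $g_1=f_i$ in Theorem \ref{linear_span}. Then $g_1$ is homogeneous of degree $d$, and trivially $\mathrm{Span}_{\mathbb C}\{g_1\}\subset\mathrm{Span}_{\mathbb C}\{f_1,\ldots,f_r\}$, since we may take coefficients $c_{1k}=\delta_{ik}$. By hypothesis $g_1$ has the polynomial PDE property as a single polynomial. Theorem \ref{linear_span} then gives the property for $f_1,\ldots,f_r$.

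Concretely, the family this produces is $P_{\alpha}=Q_{\alpha_i}$ when $\alpha_k=0$ for all $k\neq i$, and $P_\alpha=0$ otherwise. Here $\{Q_N\}$ is the family for $f_i$. One can also see this directly from condition (3). If $\alpha$ is supported on $e_i$, the only surviving term on the right side is $\partial_jf_i(\partial)Q_{\alpha_i+1}=x_jQ_{\alpha_i}$. If $\alpha$ is not supported on $e_i$, then every $\alpha+e_k$ fails to be supported on $e_i$, so both sides vanish. Conditions (1) and (2) are immediate, since $P_0=Q_0\neq0$.

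For the parenthetical example, invoke the proposition of the subsection on isolated singularities. It states that a homogeneous $f$ with an isolated singularity at $0$ has the polynomial PDE property. Its proof uses $d\ge2$, which is implicitly assumed there. No real obstacle is expected; the only point to check is this degree assumption.
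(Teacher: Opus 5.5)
Your proposal is correct and is essentially the paper's intended argument: the corollary is stated without proof right after Theorem \ref{linear_span} (and the preceding corollary) as an immediate consequence of taking $g_1=f_i$. Your explicit family $P_\alpha=Q_{\alpha_i}$ for $\alpha\in\mathbb Z_{\ge0}e_i$ and $P_\alpha=0$ otherwise is exactly what that theorem's substitution $\mathcal P(t_1,\ldots,t_r)=\mathcal Q(t_i)$ produces. Your caveat that the isolated-singularity proposition uses $d\ge 2$ is also accurate.
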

	
	\begin{corollary}
	Suppose $f_{1},...,f_{r}$ are homogeneous polynomials of degree $d$ and there exist $\lambda_{1},...,\lambda_{r} \in \mathbb{C}$ such that $\lambda_{1}f_{1}+...+\lambda_{r}f_{r}$ has isolated singularity at $0$. Then $f_{1},...,f_{r}$ have the polynomial PDE property.
	\end{corollary}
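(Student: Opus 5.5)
The statement to prove is the last corollary: if some linear combination $g=\lambda_{1}f_{1}+\cdots+\lambda_{r}f_{r}$ has an isolated singularity at $0$, then $f_{1},\ldots,f_{r}$ have the polynomial PDE property. The plan is to reduce to two results already established: the isolated singularity proposition and Theorem \ref{linear_span}.

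\textbf{Step 1.} First I would check that $g$ is a legitimate input to the isolated singularity proposition. Since each $f_{k}$ is homogeneous of degree $d$, so is $g$. Moreover $g\neq 0$: an isolated singularity at $0$ forces $\partial_{1}g,\ldots,\partial_{n}g$ to have $0$ as an isolated common zero, which is impossible for the zero polynomial. For homogeneous $g$, an isolated singularity at $0$ is equivalent to $\partial_{1}g,\ldots,\partial_{n}g$ vanishing simultaneously only at the origin. This gives exactly the regular sequence property used in that proof.

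\textbf{Step 2.} Apply the isolated singularity proposition with $r=1$ and $f_{1}=g$. This gives $g$ the polynomial PDE property as a single polynomial. One point to verify is that the proposition implicitly uses $d\ge 2$. If $d=1$, then $g$ is a nonzero linear form with $\partial_{i}g$ constants, so $g$ is smooth at $0$. In that case I would treat "isolated singularity" as vacuous or excluded; alternatively, for a linear form one can check directly that $P_{N}=c_{N}g^{N}$ with suitable scalars works. This degree caveat is the only mild obstacle.

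\textbf{Step 3.} Apply Theorem \ref{linear_span} with $l=1$ and $g_{1}=g$. The hypothesis $\mathrm{Span}_{\mathbb{C}}\{g\}\subset\mathrm{Span}_{\mathbb{C}}\{f_{1},\ldots,f_{r}\}$ holds by construction, and $g$ has the polynomial PDE property by Step 2. Hence $f_{1},\ldots,f_{r}$ have the polynomial PDE property.

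As a consequence, Theorem \ref{PDE_theorem} then gives $b_{I}(-n/d)=0$ for $I=(f_{1},\ldots,f_{r})$.
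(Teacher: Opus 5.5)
Your proof is correct and is the argument the paper intends (it leaves this corollary unproved as an immediate consequence): the isolated-singularity proposition gives $g=\sum_k\lambda_kf_k$ the polynomial PDE property, and Theorem \ref{linear_span} with $l=1$, $g_1=g$ transfers it to $f_1,\ldots,f_r$. For $d=1$, keep your first option, since a nonzero linear form is smooth at $0$ and so the hypothesis forces $d\ge 2$; drop the alternative, which is false for $n\ge 2$, because condition (3) reads $x_iP_\alpha=(\partial_i g)\,P_{\alpha+1}$ with constant $\partial_i g$, and comparing two indices $i\neq j$ forces $P_\alpha=0$ (consistent with $b_g(s)=s+1$ having no root at $-n$).
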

		
	\begin{theorem}\label{ideal_sum}
	Assume $f_{1},....,f_{r} \in \mathbb{C}[x_{1},...,x_{n}]$ and $g_{1},...,g_{l} \in \mathbb{C}[y_{1},...,y_{m}]$ are homogeneous polynomials of degree $d$. If $f_{1},...,f_{r}$ and $g_{1},...,g_{l}$ both have the polynomial PDE property, then regarded as polynomials in $\mathbb{C}[x_{1},...,x_{n},y_{1},...,y_{m}]$, the set of polynomials $f_{1},...,f_{r},g_{1},...,g_{l}$ also has the polynomial PDE property.
	\end{theorem}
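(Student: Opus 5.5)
The plan is to build the required family for the combined list $f_1,\ldots,f_r,g_1,\ldots,g_l$ as a tensor product of the two given families. Let $\{P_\alpha\}_{\alpha\in\mathbb Z_{\ge0}^r}\subset\mathbb C[x_1,\ldots,x_n]$ witness the polynomial PDE property for $f_1,\ldots,f_r$. Let $\{Q_\beta\}_{\beta\in\mathbb Z_{\ge0}^l}\subset\mathbb C[y_1,\ldots,y_m]$ witness it for $g_1,\ldots,g_l$. For $(\alpha,\beta)\in\mathbb Z_{\ge0}^{r+l}$ I will set
$$R_{\alpha,\beta}(x,y):=P_\alpha(x)\,Q_\beta(y)\in\mathbb C[x_1,\ldots,x_n,y_1,\ldots,y_m].$$
Equivalently, in the language of Remark \ref{Auxiliary_function}, the generating series is the product $\mathcal R(t,u)=\mathcal P(x;t)\,\mathcal Q(y;u)$. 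Here $t_1,\ldots,t_r$ are the auxiliary variables attached to the $f_k$ and $u_1,\ldots,u_l$ those attached to the $g_a$.

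Conditions $(1)$ and $(2)$ are immediate. $R_{\alpha,\beta}$ is homogeneous of degree $d|\alpha|+d|\beta|=d(|\alpha|+|\beta|)$. Also $R_{0,0}=P_0(x)Q_0(y)\neq0$, since $P_0$ and $Q_0$ are nonzero polynomials in disjoint sets of variables.

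For condition $(3)$ there are two kinds of variables, and I treat them separately.

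For a variable $x_i$, the partial derivatives $\partial_{x_i}g_a$ vanish because $g_a$ involves only the $y$'s. So the required identity reads
$$x_iR_{\alpha,\beta}=\sum_{k=1}^r(\partial_{x_i}f_k)(\partial_x)\cdot R_{\alpha+e_k,\beta}.$$
The operator $(\partial_{x_i}f_k)(\partial_x)$ involves only $\partial_{x_1},\ldots,\partial_{x_n}$, so it commutes with multiplication by $Q_\beta(y)$. The right-hand side therefore equals
$$Q_\beta(y)\sum_k(\partial_{x_i}f_k)(\partial_x)P_{\alpha+e_k}(x)=Q_\beta(y)\,x_iP_\alpha(x),$$
by the hypothesis on $\{P_\alpha\}$.

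For a variable $y_j$, symmetrically, $\partial_{y_j}f_k=0$. The identity reduces to
$$y_jP_\alpha Q_\beta=P_\alpha\sum_a(\partial_{y_j}g_a)(\partial_y)Q_{\beta+e_a},$$
which is the hypothesis on $\{Q_\beta\}$.

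No step is a genuine obstacle. The one point to state carefully is that the index of the combined system splits as $(\alpha,\beta)$, with $e_k$ shifting only the $\alpha$-part for an $f$-generator and $e_a$ shifting only the $\beta$-part for a $g$-generator. It also needs to be stated that constant-coefficient operators in $\partial_x$ act trivially on functions of $y$ alone, and vice versa. With that bookkeeping in place, Theorem \ref{PDE_theorem} applies to the combined family.
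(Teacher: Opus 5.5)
Your proposal is correct and is essentially the paper's proof. The paper takes the product $\mathcal R=\mathcal P\cdot\mathcal Q$ of the generating series from Remark \ref{Auxiliary_function}, whose coefficients are exactly your $R_{\alpha,\beta}=P_\alpha(x)Q_\beta(y)$, and checks condition $(3)$ separately for the $x_i$ and the $y_j$ using $\partial_{x_i}g_a=0$ and $\partial_{y_j}f_k=0$, just as you do (your coefficient-level bookkeeping is slightly more explicit than the paper's).
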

	\begin{proof}
	By Remark \ref{Auxiliary_function}, there exist $\mathcal{P}(t_{1},...,t_{r})=\sum_{\alpha_{1},...,\alpha_{r}}\mathcal{P}_{\alpha_{1},...,\alpha_{r}}\frac{t_{1}^{\alpha_{1}}...t_{r}^{\alpha_{r}}}{\alpha_{1}!...\alpha_{r}!}$ and $\mathcal{Q}(s_{1},...,s_{l})=\sum_{\beta_{1},...,\beta_{l}}\mathcal{Q}_{\beta_{1},...,\beta_{l}}\frac{s_{1}^{\beta_{1}}...s_{l}^{\beta_{l}}}{\beta_{1}!...\beta_{l}!}$ satisfying the conditions in Theorem \ref{PDE_theorem}. We take the auxiliary function for $f_{1},...,f_{r},g_{1},...,g_{l}$ as $\mathcal{R}(t_{1},...,t_{r},s_{1},...,s_{l})=\mathcal{P} \cdot \mathcal{Q}$. For any $x_{i}$, we have
	\begin{flalign*}
		x_{i}\mathcal{R}=&x_{i}\mathcal{P}\mathcal{Q} \\
		                =&\sum_{k=1}^{r}\partial_{i}f_{k}(\partial)\cdot \partial_{t_{k}}\mathcal{P} \mathcal{Q}  \\
		                =&\sum_{k=1}^{r}\partial_{i}f_{k}(\partial)\cdot \partial_{t_{k}}\mathcal{R}.
	\end{flalign*}
	Similarly, for any $y_{j}$, we have
	\begin{flalign*}
		y_{j}\mathcal{R}=&y_{j}\mathcal{Q}\mathcal{P} \\
		=&\sum_{a=1}^{l}\partial_{j}g_{a}(\partial)\cdot \partial_{s_{a}}\mathcal{Q} \mathcal{P}  \\
		=&\sum_{a=1}^{l}\partial_{j}g_{a}(\partial)\cdot \partial_{s_{a}}\mathcal{R}.
	\end{flalign*}
	Using these two equations above, one can verify that $\mathcal{R}$ has the required properties, so the conditions in Theorem \ref{PDE_theorem} are satisfied. 
	\end{proof}	
	
	The following several theorems will focus on the case of a single homogeneous polynomial $f$ of degree $d$. In this case, there exist polynomials $P_{\alpha}$ indexed by $\alpha \in \mathbb{Z}_{\ge 0}$ satisfying the following conditions.
	
	$(1)$ $P_{\alpha}$ is homogeneous of degree $d \cdot \alpha$,
	
	$(2)$ $P_{0} \neq 0$,
	
	$(3)$ $x_{i}P_{\alpha}(x)=\partial_{i}f(\partial) \cdot P_{\alpha+1}(x)$.
	
	From $(3)$ we have the following lemma.
	\begin{lemma}\label{f_partial}
	Under the assumptions above, we have $f(\partial) \cdot P_{\alpha+1}(x)=\frac{n+d\alpha}{d}P_{\alpha}(x)$.
	\end{lemma}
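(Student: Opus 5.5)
Apply the differential operator $\partial_i$ to both sides of condition $(3)$ and sum over $i$; then use Euler's identity for the homogeneous polynomial $f$, applied in the $\partial$-variables. Concretely, since $f$ is homogeneous of degree $d$, we have $\sum_{i=1}^n x_i\,\partial_i f(x)=d\,f(x)$. Replacing $x$ by $\partial$ (all operators commute), this gives
\[
\sum_{i=1}^n \partial_i\circ(\partial_i f)(\partial)=d\,f(\partial).
\]

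First, I will apply $\partial_i$ to the identity $x_iP_\alpha=\partial_if(\partial)\cdot P_{\alpha+1}$. The right-hand side becomes $\partial_i\,(\partial_if)(\partial)\cdot P_{\alpha+1}$. The left-hand side becomes $\partial_i(x_iP_\alpha)=P_\alpha+x_i\partial_iP_\alpha$.

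Next, I sum over $i=1,\dots,n$. The left side is $nP_\alpha+\sum_i x_i\partial_iP_\alpha$. Since $P_\alpha$ is homogeneous of degree $d\alpha$ by condition $(1)$, Euler's formula gives $\sum_i x_i\partial_iP_\alpha=d\alpha\,P_\alpha$, so the left side equals $(n+d\alpha)P_\alpha$. By the operator identity above, the right side equals $d\,f(\partial)\cdot P_{\alpha+1}$. Dividing by $d$ yields $f(\partial)\cdot P_{\alpha+1}=\frac{n+d\alpha}{d}P_\alpha$.

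There is no real obstacle here. The only point needing care is that the operator form of Euler's identity is legitimate: it holds because the map $x_j\mapsto\partial_j$ is a ring homomorphism from $\mathbb C[x]$ onto the commutative algebra $\mathbb C[\partial]$, so any polynomial identity among the $x_j$ transfers verbatim to the $\partial_j$. The hypotheses also imply $d\ge1$, so dividing by $d$ is allowed.
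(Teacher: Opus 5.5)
Your proposal is correct and follows the paper's proof: apply $\partial_i$ to condition $(3)$ and sum over $i$, rewrite the right side as $d\,f(\partial)\cdot P_{\alpha+1}$ via Euler's identity transferred to the $\partial$-variables, and evaluate the left side as $(n+d\alpha)P_\alpha$ via Euler's identity for $P_\alpha$. Your added remark that the hypotheses force $d\ge 1$ is also correct: if $d=0$, condition $(3)$ would give $x_iP_\alpha=0$ for all $\alpha$, hence $P_0=0$, a contradiction.
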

	\begin{proof}
	Since $f$ is homogeneous of degree $d$, we have the Euler identity
	\begin{flalign*}
		\sum_{i=1}^{n}x_{i}\partial_{i}f=d \cdot f.
	\end{flalign*}
	Replacing each $x_{i}$ by $\partial_{i}$, we have
	\begin{flalign*}
		\sum_{i=1}^{n}\partial_{i}(\partial_{i}f)(\partial)=d \cdot f(\partial).
	\end{flalign*}
	This implies that
	\begin{flalign*}
		&d  f(\partial) \cdot P_{\alpha+1}(x) \\
		=&\sum_{i=1}^{n}\partial_{i}(\partial_{i}f)(\partial) \cdot P_{\alpha+1}(x)   \\
		=&\sum_{i=1}^{n}\partial_{i}x_{i}P_{\alpha}(x) \\
		=&(n+\sum_{i=1}^{n}x_{i}\partial_{i})P_{\alpha}(x) \\
		=&(n+d\alpha)P_{\alpha}(x),
	\end{flalign*}
	which is equivalent to 
	\begin{flalign*}
		 f(\partial) \cdot P_{\alpha+1}(x)=\frac{n+d\alpha}{d}P_{\alpha}(x).
	\end{flalign*}
	\end{proof}
	\begin{theorem}\label{Tom_product}
	Suppose $f \in \mathbb{C}[x_{1},...,x_{n}], g \in \mathbb{C}[y_{1},...,y_{m}]$ are homogeneous polynomials with $\mathrm{deg}(f)=d$ and $\mathrm{deg}(g)=e$ satisfying $\frac{n}{d}=\frac{m}{e}$. If $f$ and $g$ each have the polynomial PDE property as a single polynomial, then $fg$ also has the polynomial PDE property as a single polynomial.
	\end{theorem}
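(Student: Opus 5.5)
The plan is to take the product ansatz $R_\gamma:=c_\gamma\,P_\gamma(x)\,Q_\gamma(y)$ for $\gamma\in\mathbb Z_{\ge 0}$. Here $\{P_\alpha\}$ is the family for $f$ and $\{Q_\beta\}$ the family for $g$ given by the polynomial PDE property, and $c_\gamma\in\mathbb C^{*}$ are constants to be chosen. I then check that $\{R_\gamma\}$ is a family for $fg\in\mathbb C[x_1,\ldots,x_n,y_1,\ldots,y_m]$, which is homogeneous of degree $d+e$.

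Conditions (1) and (2) are immediate. $P_\gamma Q_\gamma$ is homogeneous of degree $d\gamma+e\gamma=(d+e)\gamma$, and $R_0=c_0P_0Q_0\neq 0$ since $P_0$ and $Q_0$ are nonzero polynomials in disjoint sets of variables.

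For condition (3) I use the Leibniz rule for the product. The operator $\partial_{x_i}(fg)(\partial)$ equals $(\partial_if)(\partial_x)\,g(\partial_y)$, and $\partial_{y_j}(fg)(\partial)$ equals $f(\partial_x)\,(\partial_jg)(\partial_y)$. Operators in $\partial_x$ commute with polynomials in $y$ and vice versa. Applying them to $R_{\gamma+1}$ and using condition (3) for $f$ together with Lemma \ref{f_partial} for $g$ gives
\begin{flalign*}
(\partial_if)(\partial_x)g(\partial_y)\,R_{\gamma+1}=c_{\gamma+1}\bigl(x_iP_\gamma\bigr)\Bigl(\tfrac{m+e\gamma}{e}Q_\gamma\Bigr).
\end{flalign*}
Symmetrically, condition (3) for $g$ and Lemma \ref{f_partial} for $f$ give
\begin{flalign*}
f(\partial_x)(\partial_jg)(\partial_y)\,R_{\gamma+1}=c_{\gamma+1}\Bigl(\tfrac{n+d\gamma}{d}P_\gamma\Bigr)\bigl(y_jQ_\gamma\bigr).
\end{flalign*}
Thus the $x$-equations require $c_\gamma=c_{\gamma+1}\bigl(\tfrac{m}{e}+\gamma\bigr)$, while the $y$-equations require $c_\gamma=c_{\gamma+1}\bigl(\tfrac{n}{d}+\gamma\bigr)$.

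The only real obstacle is that these two recursions for the scalars must agree. This is exactly where the slope hypothesis $n/d=m/e$ enters: it makes the two recursions identical. Since $n/d>0$, the factor $\tfrac nd+\gamma$ is nonzero, so we may set
\begin{flalign*}
c_0=1,\qquad c_{\gamma+1}=\frac{c_\gamma}{n/d+\gamma},
\end{flalign*}
and all $c_\gamma$ are nonzero. With this choice both families of equations in (3) hold, so $fg$ has the polynomial PDE property.
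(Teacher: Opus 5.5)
Your proposal is correct and matches the paper's proof: the same ansatz $R_\gamma=c_\gamma P_\gamma(x)Q_\gamma(y)$, the same use of Lemma \ref{f_partial} to absorb the factor $g(\partial_y)$ (resp.\ $f(\partial_x)$), and the same recursion $c_\gamma=c_{\gamma+1}(\gamma+m/e)$, which the hypothesis $n/d=m/e$ makes consistent for the $x$- and $y$-equations. You also state explicitly that $\gamma+n/d\neq 0$, so every $c_\gamma$ is nonzero; the paper leaves this implicit.
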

	\begin{proof}
	Assume $P_{\alpha}(x)(\alpha \in \mathbb{Z}_{\ge 0})$ and $Q_{\alpha}(y)(\alpha \in \mathbb{Z}_{\ge 0})$ are the required polynomials for $f$ and $g$ respectively. We take $R_{\alpha}(x,y)=c_{\alpha}P_{\alpha}(x)Q_{\alpha}(y)$, where $c_{\alpha} \in \mathbb{C}$ is some non-zero constant depending on $\alpha$.
	By Lemma \ref{f_partial}, one can calculate as follows.
	\begin{flalign*}
		&\partial_{x_{i}}f(\partial_{x})g(\partial_{y})R_{\alpha+1} \\
		=&c_{\alpha+1}\partial_{x_{i}}f(\partial_{x})g(\partial_{y})P_{\alpha+1}(x)Q_{\alpha+1}(y)  \\
		=&c_{\alpha+1}x_{i}P_{\alpha}(x)(\alpha+\frac{m}{e})Q_{\alpha}(y) \\
		=&\frac{c_{\alpha+1}}{c_{\alpha}}(\alpha+\frac{m}{e})x_{i}R_{\alpha}.
	\end{flalign*}
	We take $c_{\alpha}$ such that $c_{\alpha}=c_{\alpha+1} \cdot (\alpha+\frac{m}{e})$, then $\partial_{x_{i}}f(\partial_{x})g(\partial_{y})R_{\alpha+1}=x_{i}R_{\alpha}$. Similarly, since $\frac{m}{e}=\frac{n}{d}$, we have $\partial_{y_{j}}g(\partial_{y})f(\partial_{x})R_{\alpha+1}=y_{j}R_{\alpha}$, giving the required statement. 
	\end{proof}
	
	\begin{theorem}\label{Tom_sum}
	Suppose $f \in \mathbb{C}[x_{1},...,x_{n}],g \in \mathbb{C}[y_{1},...,y_{m}]$ are homogeneous polynomials of degree $d(d \ge 2)$. If $f$ and $g$ have the polynomial PDE property as a single polynomial, then $f+g$ also has the polynomial PDE property as a single polynomial in $\mathbb{C}[x_{1},...,x_{n},y_{1},...,y_{m}]$.
	\end{theorem}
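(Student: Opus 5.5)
The plan is to build the family for $f+g$ directly from the families for $f$ and $g$, taking a sum of products over all splittings of the index. Let $P_a(x)$, $a\in\mathbb Z_{\ge 0}$, be the polynomials witnessing the polynomial PDE property of $f$, and $Q_b(y)$, $b\in\mathbb Z_{\ge 0}$, those for $g$. Thus $P_a$ is homogeneous of degree $da$, $P_0\neq 0$, and $x_iP_a=\partial_if(\partial_x)\cdot P_{a+1}$; similarly for $Q_b$ with the $y_j$ and $g$. Since $\partial_{x_i}(f+g)=\partial_{x_i}f$ and $\partial_{y_j}(f+g)=\partial_{y_j}g$, the conditions to check for $f+g$ are
$$x_iR_N=\partial_{x_i}f(\partial_x)\cdot R_{N+1},\qquad y_jR_N=\partial_{y_j}g(\partial_y)\cdot R_{N+1}.$$

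I will try
$$R_N(x,y)=\sum_{a+b=N}c_{a,b}\,P_a(x)Q_b(y)$$
with constants $c_{a,b}$ still to be chosen. Each summand is homogeneous of degree $da+db=dN$, so condition (1) holds. Also $R_0=c_{0,0}P_0Q_0\neq 0$ as long as $c_{0,0}\neq 0$, since $P_0Q_0$ is a product of nonzero polynomials in disjoint variables.

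The operator $\partial_{x_i}f(\partial_x)$ only involves $x$-derivatives, so it passes through $Q_b(y)$. Hence
$$\partial_{x_i}f(\partial_x)\cdot R_{N+1}=\sum_{a+b=N+1}c_{a,b}\,\bigl(\partial_{x_i}f(\partial_x)\cdot P_a\bigr)Q_b .$$
For $a\ge 1$ the defining relation gives $\partial_{x_i}f(\partial_x)\cdot P_a=x_iP_{a-1}$.

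The term $a=0$ is the boundary term, and it is the one place where the hypothesis $d\ge 2$ enters. Here $P_0$ is a nonzero constant, because its degree is $d\cdot 0=0$. The operator $\partial_{x_i}f(\partial_x)$ is homogeneous of order $d-1\ge 1$, so it kills $P_0$. So the $a=0$ term vanishes, consistent with the convention $P_{-1}=0$.

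After reindexing, the sum becomes $\sum_{a'+b=N}c_{a'+1,b}\,x_iP_{a'}Q_b$. Comparing with $x_iR_N$ shows that it is enough to have $c_{a+1,b}=c_{a,b}$. The symmetric computation in the $y$-variables asks for $c_{a,b+1}=c_{a,b}$. Both are satisfied by taking all $c_{a,b}=1$, that is,
$$R_N=\sum_{a+b=N}P_aQ_b .$$

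I expect the main obstacle to be verifying that the boundary terms vanish, namely $\partial_{x_i}f(\partial)P_0=0$ and $\partial_{y_j}g(\partial)Q_0=0$. This is exactly where $d\ge 2$ is needed. In the language of Remark \ref{Auxiliary_function}, the construction amounts to taking the convolution of the ordinary, rather than exponential, generating series of $P$ and $Q$.
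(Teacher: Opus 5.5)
Your proposal is correct and is essentially the paper's proof. The paper takes exactly $R_\alpha=\sum_{k=0}^{\alpha}P_k(x)Q_{\alpha-k}(y)$ and disposes of the boundary term $\partial_{x_i}f(\partial_x)\cdot P_0=0$ using $d\ge 2$, just as you do. Your explicit checks of homogeneity and of $R_0=P_0Q_0\neq 0$ fill in what the paper leaves as ``one can verify''.
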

	\begin{proof}
     Let $P_{\alpha}(x),Q_{\alpha}(y)(\alpha \in \mathbb{Z}_{\ge 0})$ be the required polynomials for $f$ and $g$ respectively. We take $R_{\alpha}(x,y)=\sum_{k=0}^{\alpha}P_{k}(x)Q_{\alpha-k}(y)$. Then for any $x_{i}$ we have
	\begin{flalign*}
		&\partial_{x_{i}}(f(\partial_{x})+g(\partial_{y})) \cdot R_{\alpha+1}(x,y) \\
		=&\partial_{x_{i}}(f(\partial_{x})+g(\partial_{y})) \cdot \sum_{k=0}^{\alpha+1}P_{k}(x)Q_{\alpha+1-k}(y)  \\
		=&\partial_{x_{i}}f(\partial_{x}) \cdot \sum_{k=0}^{\alpha+1}P_{k}(x)Q_{\alpha+1-k}(y). 
	\end{flalign*}
	Because $d \ge 2$, $\partial_{x_{i}}f$ has degree at least $1$, so $\partial_{x_{i}}f(\partial_{x}) \cdot P_{0}(x)=0$. This implies that
	\begin{flalign*}
			&\partial_{x_{i}}(f(\partial_{x})+g(\partial_{y})) \cdot R_{\alpha+1}(x,y) \\
				=&\sum_{k=1}^{\alpha+1}x_{i}P_{k-1}(x)Q_{\alpha+1-k}(y) \\
			=&x_{i}R_{\alpha}(x,y).
	\end{flalign*}
	Similarly, for any $y_{j}$, $\partial_{y_{j}}(f(\partial_{x})+g(\partial_{y})) \cdot R_{\alpha+1}(x,y)=y_{j}R_{\alpha}(x,y)$. Thus one can verify that these $R_{\alpha}$ are the required polynomials for $f+g$.
	\end{proof}
		
		\section{Acknowledgement}

		Y. Chen and H. Zuo are supported by BJNSF Grant 1252009. H. Zuo is supported by NSFC Grants 12271280 and 12671056.

	\end{document}